\newtheorem{definition}{Definition}[section]
\newtheorem{theorem}[definition]{Theorem}
\newtheorem{lemma}[definition]{Lemma}
\theoremstyle{remark}
\newtheorem{remark}[definition]{Remark}
\numberwithin{equation}{section}
\title{Nonlinear Potential Estimates for Generalized Stokes System}
\author[a]{Lingwei Ma}
\author[b]{Zhenqiu Zhang\thanks{Corresponding author.}}
\author[c]{Feng Zhou}
\affil[a]{School of Mathematical Sciences, Tianjin Normal University, Tianjin, 300387, P.~R.~China}
\affil[b]{School of Mathematical Sciences and LPMC, Nankai University, Tianjin, 300071, P.~R.~China}
\affil[c]{School of Mathematics and Statistics, Shandong Normal University, Jinan, Shandong, 250358, P.~R.~China}
\date{\today}
\begin{document}
\maketitle
\footnotetext[1]{E-mail: mlw1103@163.com (L. Ma), zqzhang@nankai.edu.cn (Z. Zhang), zhoufeng@u.nus.edu(F. Zhou).}
\begin{abstract}
In this paper, we consider the generalized stationary Stokes system with $p$\,-growth and Dini-$\operatorname{BMO}$ regular coefficients. The main purpose is to establish pointwise estimates for the shear rate and the associated pressure to such Stokes system in terms of an unconventional nonlinear Havin-Maz'ya-Wolff type potential of the nonhomogeneous term in the plane. As a consequence, a symmetric gradient $L^{\infty}$ estimate is obtained. Moreover, we derive potential estimates for the weak solution to the Stokes system without additional regularity assumptions on the coefficients in higher dimensional space.
\\

Mathematics Subject classification (2020): 35Q35; 35J92; 35B65.

Keywords: Nonlinear Stokes system; Discontinuous coefficients; Potential estimates; Havin-Maz'ya-Wolff potential. \\

\end{abstract}


\section{Introduction.}\label{section1}

Let $\Omega\subset\mathbb{R}^{n}$ ($n\geq2$) be a bounded John domain. The concern of this paper is to study the generalized nonlinear Stokes system on $\Omega$ as follows
\begin{equation}\label{model}
\left\{\begin{array}{r@{\ \ }c@{\ \ }ll}
\operatorname{div}\mathcal{A}\left(x,D {\bf u}\right)-\nabla\pi & =& \operatorname{div}{\bf F} & \mbox{in}\ \ \Omega\,, \\[0.05cm]
\operatorname{div}{\bf u}&=& 0 & \mbox{in}\ \ \Omega\,, \\[0.05cm]
{\bf u}&=& 0 & \mbox{on}\ \ \partial\Omega\,, \\[0.05cm]
\end{array}\right.
\end{equation}
where the velocity of the fluid flow ${\bf u}:\Omega\rightarrow\mathbb{R}^{n}$ and its pressure $\pi:\Omega\rightarrow\mathbb{R}$\,, the shear rate $D{\bf u}=\frac{\nabla {\bf u}+\left(\nabla{\bf u}\right)^{T}}{2}$ and the given exterior force ${\bf F}: \Omega\rightarrow\mathbb{R}^{n\times n}_{\rm sym}$\,.
The vector field $\mathcal{A}: \Omega\times\mathbb{R}^{n\times n}\rightarrow\mathbb{R}^{n\times n}$ is assumed to be a $C^{1}$\,-\,Carath\'{e}odory function and satisfy the following ellipticity and $p$\,-growth conditions
\begin{equation}\label{structural-cons}
\left\{\begin{array}{r@{\ \ }c@{\ \ }ll}
\left\langle\mathcal{A}(x,\xi)-\mathcal{A}(x,\eta),\xi-\eta\right\rangle &\geq& \nu\left(\mu^{2}+|\xi|^{2}+|\eta|^{2}\right)^{\frac{p-2}{2}}|\xi-\eta|^{2}\,,\\[0.05cm]
\left|\mathcal{A}(x,\xi)-\mathcal{A}(x,\eta)\right|&\leq& L\left(\mu^{2}+|\xi|^{2}+|\eta|^{2}\right)^{\frac{p-2}{2}}|\xi-\eta|\,,\\[0.05cm]
\mathcal{A}(x,{\bf 0}) &=& {\bf 0}\,,
\end{array}\right.
\end{equation}
for almost every $x\in\Omega$ and each $\xi,\,\eta\in\mathbb{R}^{n\times n}$\,. In \eqref{structural-cons}\,, $0<\nu\leq L$, $0\leq\mu\leq1$, $1<p<+\infty$, and $\langle\cdot,\cdot\rangle$ denotes the standard inner product in $\mathbb{R}^{n\times n}$\,.

Stokes system \eqref{model} originates from the flow of non-Newtonian fluid in the field of fluid mechanics, where the stress tensor $\mathcal{A}$ may depend nonlinearly on the shear rate $D{\bf u}$. Note that the behavior of the fluid is quite different for the cases $p>2$ and $1<p<2$. The former one describes the shear thickening fluids, while the latter corresponds to the shear thinning fluids. In this paper, we first establish pointwise estimates for the gradient of a solution and the pressure to \eqref{model} via an unconventional nonlinear Havin-Maz'ya-Wolff type potential of the nonhomogeneous term in two dimensional space for $p>2$ and $1<p<2$, simultaneously. Furthermore, we obtain potential estimates for the weak solution to \eqref{model} in higher dimensional space for the superquadratic case $p\geq 2$.

Nonlinear potential theory plays an essential role in the regularity theory of partial differential equations.
Its aim is to establish a unified approach to capture the regularity properties of solutions to various elliptic and parabolic equations in terms of the regularity properties of the given nonhomogeneous terms and coefficients, such as the celebrated Calder\'on–Zygmund estimates and Schauder estimates. An important impulse on this subject can be traced back to Kilpel\"{a}inen and Mal\'{y} \cite{KiMa, KiMa2}, who obtained pointwise estimates for solution itself to the quasilinear equations of $p$\,-Laplace type via the nonlinear Wolff type potentials of the nonhomogeneous term in the equations. The remarkable Havin-Maz'ya-Wolff potential was introduced by Maz'ya and Havin \cite{MaHa} and the relevant fundamental works were credited to Hedberg and Wolff \cite{HeWo}. After that Trudinger and Wang \cite{TrWa} provided similar potential estimates by virtue of a different proof. Shortly thereafter, such potential estimates have been established extensively to various kinds of equations and systems since these pioneering works (cf. \cite{Lab, LMM, CiSc, ZZM} and the references therein). A further propulsion on this subject was achieved by Mingione \cite{Min}, who showed pointwise gradient estimates for solutions to the equations with linear growth operators in terms of the linear Riesz potentials, exactly as it happens for the Poisson equation via representation formulas. Subsequently, considerable literature deduced that such precise pointwise bounds also held for the gradient of solutions to the nonlinear equations and systems via the nonlinear Wolff type potentials (cf. \cite{DuMi, DuMi2, BH, DuMi3, KuMi, KuMi2, KuMi3, XZ} and the references therein). Besides, we mention some recent works in connecting with the nonlinear Wolff type potential estimates to nonlocal equations and nonuniformly elliptic variational problems (cf. \cite{KuMinSi, BM}).

It is natural to investigate whether the so-called nonlinear potential theory is applicable to the stationary and evolutionary Stokes system. To our knowledge the first results regarding to this subject were given in \cite{MZ}. The authors showed that the weak solution pair to the linear Stokes system inherits the exact analogue of
pointwise potential estimates for elliptic systems. Inspired by the above literature, the purpose of this paper is to extend precise pointwise bounds to the generalized nonlinear Stokes system. The main difficulty is not only that the linear dependence of the extra stress tensor on the shear rate has been replaced by a more general nonlinear relation, but also that the relevant structure only depends on the symmetric part of the gradient.

In order to illustrate the main results of this paper, we start by presenting the following definition of weak solution pair to the generalized Stokes system \eqref{model}, which was initiated by \cite{KT, BC}.
\begin{definition}\label{Def-weaksolution}
Let ${\bf F}\in L^{p'}(\Omega\,,\mathbb{R}^{n\times n}_{\rm sym})$\,. Then there exists a unique function
\begin{equation*}
{\bf u}\in W^{1,p}_{0,\operatorname{div}}\left(\Omega,\mathbb{R}^{n}\right):=\left\{{\bf u}\in W^{1,p}_{0}\left(\Omega,\mathbb{R}^{n}\right)|\operatorname{div}{\bf u}=0 \ {\rm in} \ \Omega\right\},
\end{equation*}
which solves \eqref{model} in the distribution sense, i.e.,
\begin{equation*}
 \int_{\Omega}\big\langle \mathcal{A}(x,D{\bf u})\,,D\phi\big\rangle\operatorname{d}\!x
= \int_{\Omega}\big\langle{\bf F}\,,D\phi\big\rangle\operatorname{d}\!x
\end{equation*}
for any divergence free test function $\phi\in
W_{0,\operatorname{div}}^{1,p}(\Omega,\mathbb{R}^{n})$\,. Meanwhile, if ${\bf u}$ is such weak solution and $\pi\in L^{p'}(\Omega)$ stands for an associated pressure of ${\bf u}$, which satisfies
\begin{equation*}
 \int_{\Omega}\big\langle\mathcal{A}(x,D{\bf u})\,,D\varphi\big\rangle-\pi \operatorname{div}{\bf\varphi}\operatorname{d}\!x =
\int_{\Omega}\big\langle{\bf F}\,,D\varphi\big\rangle\operatorname{d}\!x
\end{equation*}
for any test function ${\bf \varphi}\in
W_{0}^{1,p}(\Omega,\mathbb{R}^{n})$, then $\left({\bf u},\pi\right)$ is called a weak solution pair to \eqref{model}\,, where $p'=\frac{p}{p-1}$ is a conjugate exponent of $p$.
\end{definition}

We shall use the nonlinear Wolff potential to obtain the potential estimates. The definition of the classical Wolff potential is stated below.
\begin{definition}
Let $s>1$ and $\alpha\in(0,\frac{n}{s}]$, the truncated Havin-Maz'ya-Wolff potential ${\bf W}_{\alpha,s}^{R}f$ of $f\in L_{\rm loc}^{1}(\Omega)$ is defined by
\begin{equation*}
  {\bf W}_{\alpha,s}^{R}f(x_{0})=\int_{0}^{R}\left(\varrho^{\alpha s}\fint_{B_{\varrho}(x_{0})}| f(x)|\operatorname{d}\!x\right)^{\frac{1}{s-1}}\frac{\operatorname{d}\!\varrho}{\varrho}
\end{equation*}
for any $x_{0}\in\Omega$ and $R>0$ such that $B_{R}(x_{0})\subset\Omega$, where $B_{\varrho}(x_{0})$ denotes an open ball in $\mathbb{R}^{n}$ with center $x_{0}$ and radius $\varrho>0$.
\end{definition}

In addition, to establish the first pointwise gradient estimates, we need more regularity assumptions on the partial map $x\mapsto\mathcal{A}(x,\cdot)$\,.
\begin{definition}\label{Def-Vanish}
For some $R>0$, we denote the $\operatorname{BMO}$ semi-norm of $x\mapsto\mathcal{A}(x,\cdot)$ as follows
\begin{equation*}\label{smallBMO}
\left[\mathcal{A}\right]_{\operatorname{BMO}}(R):=\sup_{{\substack{ y\,\in\,\Omega\\0<r\leq R}}} \fint_{B_{r}(y)}
\beta(\mathcal{A},B_{r}(y))\operatorname{d}\!x\,,
\end{equation*}
where
\begin{equation*}
\beta(\mathcal{A},B_{r}(y)):=\sup_{{\substack{\xi\in\mathbb{R}^{n\times n}_{sym}\backslash \{\bf 0\}}}}\frac{\left|\mathcal{A}(x,\xi)-\left(\mathcal{A}(\cdot,\xi)\right)_{B_{r}(y)}\right|}{\left(\mu^{2}+|\xi|^{2}\right)^{\frac{p-2}{2}}|\xi|}
\end{equation*}
and
\begin{equation*}
\left(\mathcal{A}(\cdot,\xi)\right)_{B_{r}(y)}:=\fint_{B_{r}(y)}\mathcal{A}(x,\xi)\operatorname{d}\!x
=\frac{1}{\left|B_{r}(y)\right|}\int_{B_{r}(y)}\mathcal{A}(x,\xi)\operatorname{d}\!x\,.
\end{equation*}
We say that $\left[\mathcal{A}\right]_{\operatorname{BMO}}(\cdot)$ is Dini-$\operatorname{BMO}$ regular if
\begin{equation}\label{dini-bmo}
  d(R):=\int_{0}^{R}\left[\mathcal{A}\right]^{\frac{\hat{\sigma}}{p}}_{\operatorname{BMO}}(\varrho)
  \frac{\operatorname{d}\!\varrho}{\varrho}<\infty\,,
\end{equation}
where $\hat{\sigma}=\hat{\sigma}(\nu,L,n,p)>p$ will be given in \eqref{sigma}.
\end{definition}

We are now in a position to state the first result of this paper. It infers that the shear rate $D{\bf u}$ and its pressure $\pi$ to the nonlinear Stokes system \eqref{model} with Dini-$\operatorname{BMO}$ regular coefficients can be controlled by an unconventional nonlinear Havin-Maz'ya-Wolff type potential of the nonhomogeneous term $\bf F$.

\begin{theorem}\label{Th1}\textup{(\textbf{Gradient estimate.})}
Let $({\bf u},\pi)$ be a weak solution pair to
\eqref{model} with ${\bf F}\in L_{\rm loc}
^{p'}\left(\Omega\,,\mathbb{R}^{2\times 2}_{\rm sym}\right)$ and $\mathcal{A}$ satisfying \eqref{structural-cons} and \eqref{dini-bmo} for some $\delta=\delta(p,\nu,L)>0$ and $R>0$. Then there exist a positive constant $C=C(\nu,L,p,\left[\mathcal{A}\right]_{\operatorname{BMO}}(\cdot))$ and a radius $R_{0}=R_{0}\left(\nu,L,p,d(\cdot)\right)$ such that the following pointwise estimate
\begin{eqnarray}\label{gradient-estimate}
&& |D{\bf u}(x_{0})|+|\pi (x_{0})|^{\frac{p'}{p}} \\
&\leq& C\fint_{B_{R}(x_{0})}\left(\mu+\left|D{\bf u}\right|\right)\operatorname{d}\!x+ C\left(\fint_{B_{R}(x_{0})}\left|\pi\right|\operatorname{d}\!x\right)^{\frac{p'}{p}}+C\int_{0}^{2R}
\left(\fint_{B_{\varrho}(x_{0})}|{\bf F}-({\bf F})_{B_{\rho}(x_{0})}|^{p'}\operatorname{d}\!x\right)^{\frac{1}{p}}\frac{\operatorname{d}\!\varrho}{\varrho} \nonumber
\end{eqnarray}
holds for almost all $x_{0}\in\Omega$ and every $B_{2R}(x_{0})\subset\Omega$ with $R\leq R_{0}$.
\end{theorem}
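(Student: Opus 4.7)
The plan is to follow a Kuusi--Mingione style comparison-and-iteration scheme adapted to the nonlinear Stokes pair $({\bf u},\pi)$. On each dyadic ball $B_{\varrho}(x_{0})\subset B_{R}(x_{0})$ I would introduce the combined excess
\begin{equation*}
E(\varrho):=\fint_{B_{\varrho}(x_{0})}\bigl(\mu+\abs{D{\bf u}-(D{\bf u})_{B_{\varrho}}}\bigr)\operatorname{d}\!x+\left(\fint_{B_{\varrho}(x_{0})}\abs{\pi-(\pi)_{B_{\varrho}}}\operatorname{d}\!x\right)^{p'/p},
\end{equation*}
and aim for a one-step inequality of the form
\begin{equation*}
E(\sigma\varrho)\leq C\sigma^{\alpha}E(\varrho)+C[\mathcal{A}]_{\operatorname{BMO}}^{\hat{\sigma}/p}(\varrho)\bigl(\mu+\abs{(D{\bf u})_{B_{\varrho}}}\bigr)+C\left(\fint_{B_{\varrho}(x_{0})}\abs{{\bf F}-({\bf F})_{B_{\varrho}}}^{p'}\operatorname{d}\!x\right)^{1/p}
\end{equation*}
for some $\alpha=\alpha(\nu,L,p)>0$ and every $\sigma\in(0,1/2]$, to be iterated along a geometric sequence $\varrho_{k}=\sigma^{k}R$ and summed into precisely the truncated Havin--Maz'ya--Wolff potential appearing on the right-hand side of \eqref{gradient-estimate}.

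\textbf{Comparison layers.} The first technical step is to introduce two auxiliary pairs on $B_{\varrho}(x_{0})$: (i) $({\bf v},\pi_{v})$ solving the nonlinear Stokes system with zero source and Dirichlet data matching $({\bf u},\pi)$ on $\partial B_{\varrho}$, and (ii) $({\bf w},\pi_{w})$ solving the frozen-coefficient system $\operatorname{div}(\mathcal{A})_{B_{\varrho}}(D{\bf w})-\nabla\pi_{w}=0$ with ${\bf w}={\bf v}$ on $\partial B_{\varrho}$. Exploiting the divergence-gauge invariance $\operatorname{div}({\bf F}-({\bf F})_{B_{\varrho}})=\operatorname{div}{\bf F}$ (which is what ultimately produces the oscillation form of the potential), testing with the divergence-free difference ${\bf u}-{\bf v}$, handling the pressure equation via a Bogovskii corrector, and using the monotonicity in \eqref{structural-cons} through the quasi-norm $V(\xi)=(\mu^{2}+\abs{\xi}^{2})^{(p-2)/4}\xi$ should yield
\begin{equation*}
\fint_{B_{\varrho}}\abs{V(D{\bf u})-V(D{\bf v})}^{2}\operatorname{d}\!x+\fint_{B_{\varrho}}\abs{\pi-\pi_{v}}^{p'}\operatorname{d}\!x\lesssim\fint_{B_{\varrho}(x_{0})}\abs{{\bf F}-({\bf F})_{B_{\varrho}}}^{p'}\operatorname{d}\!x.
\end{equation*}
For the freezing step, a Gehring-type reverse H\"older inequality provides higher integrability of $D{\bf v}$ with exponent $\hat{\sigma}>p$ and lets one absorb the BMO oscillation of $\mathcal{A}$ into a bound on $\fint_{B_{\varrho}}\abs{V(D{\bf v})-V(D{\bf w})}^{2}$ by $[\mathcal{A}]_{\operatorname{BMO}}^{\hat{\sigma}/p}(\varrho)(\mu+\abs{(D{\bf u})_{B_{\varrho}}})^{p}$, with an analogous control of $\pi_{v}-\pi_{w}$ modulo constants. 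Korn's inequality is invoked throughout to translate between the symmetric gradient and the full gradient.

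\textbf{Frozen regularity, iteration, and main obstacle.} The pair $({\bf w},\pi_{w})$ satisfies a constant-coefficient nonlinear Stokes system with $p$-growth, and in the planar setting $n=2$ one has a $C^{1,\alpha}$ theory (Uhlenbeck/Giaquinta--Modica type, adapted to the symmetric gradient) which delivers the Campanato-type decay $E_{({\bf w},\pi_{w})}(\sigma\varrho)\lesssim\sigma^{\alpha}E_{({\bf w},\pi_{w})}(\varrho)$ for the analogous reference-problem excess. Combining this decay with the two comparison inequalities yields the one-step estimate displayed above. Iterating along $\varrho_{k}=\sigma^{k}R$, the Dini--$\operatorname{BMO}$ assumption \eqref{dini-bmo} guarantees summability of the coefficient contributions, while the ${\bf F}$-terms accumulate into exactly the truncated Havin--Maz'ya--Wolff potential of \eqref{gradient-estimate}; uniform boundedness of $\{(D{\bf u})_{B_{\varrho_{k}}}\}$ and $\{\abs{(\pi)_{B_{\varrho_{k}}}}^{p'/p}\}$, combined with Lebesgue differentiation, then delivers the pointwise bound at almost every $x_{0}$. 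The hardest part, in my view, is engineering this excess decay \emph{jointly} for $D{\bf u}$ and $\pi$ in a symmetric-gradient setting: the pressure has to be threaded through every Bogovskii-based energy estimate, and the planar $C^{1,\alpha}$ regularity of the frozen symmetric-gradient Stokes system must be quantitative enough in $\mu+\abs{(D{\bf u})_{B_{\varrho}}}$ to survive the infinite iteration needed to realize the Wolff potential on the right-hand side.
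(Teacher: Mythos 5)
Your proposal follows essentially the same strategy as the paper: the two-layer comparison through the homogeneous pair $(\mathbf{v},\pi_v)$ and the frozen-coefficient pair $(\mathbf{w},\pi_w)$, the use of the planar Campanato decay for the frozen symmetric-gradient Stokes system as the reference-problem regularity, the Bogovskii corrector to thread the pressure through the energy estimates, the higher integrability of $D\mathbf{v}$ to absorb the BMO oscillation, the dyadic iteration producing the Wolff-type potential, and the induction argument to keep $\{(D\mathbf{u})_{B_{\varrho_k}}\}$ and $\{(\pi)_{B_{\varrho_k}}\}$ uniformly bounded before applying Lebesgue differentiation. One small imprecision worth flagging: your displayed comparison estimate for $\pi-\pi_v$ is slightly too clean for $p>2$ — converting $|\mathcal{A}(x,D\mathbf{u})-\mathcal{A}(x,D\mathbf{v})|^{p'}$ into $V$-differences via Young's inequality inevitably produces an extra $\varepsilon(\fint(\mu+|D\mathbf{u}|))^p$ term (the $\varepsilon\chi_{\{p\neq 2\}}$ term in the paper's Lemma 3.1), which must then be carried through the iteration; you account for it implicitly in your one-step decay display, so the plan remains sound.
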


\begin{remark}
The reason for the pointwise estimate \eqref{gradient-estimate} applied only to planar flows is due to the absence of Lipschitz regularity for solutions to the corresponding limiting problem \eqref{ComparisonSystem-2} in higher dimensions. Note that whether such Lipschitz regularity would hold in higher dimensions is still an open problem so far. Even so, such pointwise estimate of symmetric gradient is still new for the case of elliptic systems.
\end{remark}
\begin{remark}
As a consequence of Theorem \ref{Th1}\,, one can find a sufficient condition on the nonhomogeneous term $\bf F$ such that $D{\bf u}$ and $\pi$ are locally bounded in the plane, which solves an open issue in the Calder\'{o}n-Zygmund theory to the nonlinear Stocks system. For instance, if the given exterior force ${\bf F}$ has a modulus of continuity $\omega$ satisfying the Dini type continuous condition that $\int_{0}^{R}\omega^{\frac{p'}{p}}(\varrho)\frac{\operatorname{d}\!\varrho}{\varrho} <\infty$, then the shear rate $D{\bf u}$ and  its pressure $\pi$ are locally bounded in $\Omega$.
\end{remark}
Furthermore, we establish a precise potential estimate for the weak solution $\bf u$ to \eqref{model} without the restriction to the planar case. We should mention that there is no additional regularity assumption on the partial map $x\mapsto\mathcal{A}(x,\cdot)$ in the following theorem.
\begin{theorem}\label{Th2}\textup{(\textbf{Zero order estimate.})}
Let $({\bf u},\pi)$ be a weak solution pair to
\eqref{model} with ${\bf F}\in L
^{p'}_{\rm loc}\left(\Omega\,,\mathbb{R}^{n\times n}_{\rm sym}\right)$ for $2\leq p\leq n<\theta$ and $\mathcal{A}$ satisfying \eqref{structural-cons}, where $\theta$ is given in \eqref{v-reverseholder}\,. Then there exists a positive constant $C=C(n,\nu,L,p)$ such that the pointwise estimate
\begin{equation}\label{zero-estimate}
  |{\bf u}(x_{0})|\leq C\fint_{B_{R}(x_{0})}\left|{\bf u}\right|\operatorname{d}\!x+C\, {\bf W}_{\frac{p}{p+1},p+1}^{2R}\left(\mu^{p}+|{\bf F}|^{p'}\right)(x_{0})
\end{equation}
holds for almost all $x_{0}\in\Omega$ and every $B_{2R}(x_{0})\subset\Omega$.
\end{theorem}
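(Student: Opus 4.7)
The plan is to adapt the Kilpel\"ainen--Mal\'y / Kuusi--Mingione strategy for zero-order Wolff potential estimates to the Stokes setting. The argument consists of (i) a homogeneous comparison, (ii) a Morrey-type oscillation bound on the comparison solution powered by the reverse H\"older inequality together with $\theta>n$, (iii) a one-step decay estimate, and (iv) a dyadic iteration that recovers the Wolff potential as a Riemann sum.

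\textbf{Step 1 (Homogeneous comparison).} On a fixed ball $B_r=B_r(x_0)\subset B_{2R}(x_0)\subset\Omega$, let $({\bf v},\pi_{1})$ solve
\begin{equation*}
\operatorname{div}\mathcal{A}(x,D{\bf v})-\nabla\pi_{1}=0,\quad \operatorname{div}{\bf v}=0\ \ \text{in }B_r,\qquad {\bf v}={\bf u}\ \ \text{on }\partial B_r.
\end{equation*}
Since ${\bf u}-{\bf v}\in W^{1,p}_{0,\operatorname{div}}(B_r,\mathbb{R}^n)$, the pressure drops out upon testing. Using the monotonicity in \eqref{structural-cons} and replacing ${\bf F}$ by ${\bf F}-({\bf F})_{B_r}$ (admissible because the test function is divergence-free), Young's inequality produces, for $p\geq 2$,
\begin{equation*}
\fint_{B_r}|D({\bf u}-{\bf v})|^{p}\operatorname{d}\!x\leq C\fint_{B_r}\bigl(|{\bf F}|^{p'}+\mu^{p}\bigr)\operatorname{d}\!x.
\end{equation*}

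\textbf{Step 2 (Morrey-type oscillation bound).} The reverse H\"older inequality \eqref{v-reverseholder} provides $D{\bf v}\in L^{\theta}_{\operatorname{loc}}(B_r)$ for some $\theta>n$. Combined with Sobolev--Morrey embedding, it yields, for any $\tau\in(0,1/2)$,
\begin{equation*}
\operatorname{osc}_{B_{\tau r}}{\bf v}\leq C\tau^{\,1-n/\theta}\,r\left(\fint_{B_{r/2}}(|D{\bf v}|^{p}+\mu^{p})\operatorname{d}\!x\right)^{1/p}.
\end{equation*}
This is the only place where $\theta>n$ is used, and it is precisely the mechanism driving the decay.

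\textbf{Step 3 (One-step iteration estimate).} Decomposing ${\bf u}={\bf v}+({\bf u}-{\bf v})$, combining Steps 1 and 2, and invoking Caccioppoli's inequality to convert $\fint|D{\bf v}|^p$ into $r^{-p}\fint|{\bf u}|^p$ (plus lower-order potential terms) yields the one-step bound
\begin{equation*}
|({\bf u})_{B_{\tau r}}-({\bf u})_{B_r}|\leq C\tau^{\,1-n/\theta}\fint_{B_{2r}}|{\bf u}|\operatorname{d}\!x+C\tau^{-n}\,r\left(\fint_{B_{r}}\bigl(|{\bf F}|^{p'}+\mu^{p}\bigr)\operatorname{d}\!x\right)^{1/p}.
\end{equation*}

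\textbf{Step 4 (Dyadic iteration and Riemann-sum comparison).} Setting $r_k=\tau^kR$ and iterating Step 3 along $\{B_{r_k}\}$, one takes $\tau$ small enough to reabsorb the geometric factor $C\tau^{1-n/\theta}$, then sums the telescoping differences to obtain
\begin{equation*}
|({\bf u})_{B_{r_k}}|\leq C\fint_{B_R}|{\bf u}|\operatorname{d}\!x+C\sum_{j=0}^{k-1}r_j\left(\fint_{B_{r_j}}(|{\bf F}|^{p'}+\mu^{p})\operatorname{d}\!x\right)^{1/p}.
\end{equation*}
Passing $k\to\infty$ via Lebesgue differentiation at $x_0$ converts the left-hand side into $|{\bf u}(x_0)|$. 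Since $\alpha=p/(p+1)$ and $s=p+1$ satisfy $\alpha s=p$ and $s-1=p$, the Riemann sum on the right is comparable to
\begin{equation*}
\int_{0}^{2R}\left(\fint_{B_{\varrho}(x_0)}(|{\bf F}|^{p'}+\mu^{p})\operatorname{d}\!x\right)^{1/p}\operatorname{d}\!\varrho\;=\;{\bf W}_{p/(p+1),p+1}^{2R}(\mu^{p}+|{\bf F}|^{p'})(x_0),
\end{equation*}
which is exactly \eqref{zero-estimate}.

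\textbf{Main obstacle.} The crux is Step 2: the Gehring-type self-improvement \eqref{v-reverseholder} must push the integrability of $D{\bf v}$ strictly above the dimension $n$, which is the sole role of the assumption $n<\theta$. In the Stokes context this is more delicate than in the $p$-Laplace setting, because establishing the reverse H\"older inequality for the comparison pair $({\bf v},\pi_{1})$ requires the existence theory of Stokes systems with prescribed boundary data together with a Bogovski\u\i-type construction to decouple the pressure before applying the Giaquinta--Modica improvement. Once Step 2 is secured, the divergence-free structure of the admissible test function ${\bf u}-{\bf v}$ makes the pressure essentially invisible throughout Steps 1, 3, and 4, and the iteration proceeds in parallel with the classical $p$-Laplace treatment.
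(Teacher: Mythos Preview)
Your overall strategy coincides with the paper's proof: compare with the homogeneous problem on each ball, use the reverse H\"older gain $\theta>n$ to obtain a Campanato-type decay for ${\bf v}$, transfer this to ${\bf u}$, and iterate dyadically so that the data terms sum to the Wolff potential ${\bf W}_{p/(p+1),p+1}^{2R}$. The paper carries this out via the oscillation decay
\[
\fint_{B_{\upsilon R}}\bigl|{\bf u}-({\bf u})_{B_{\upsilon R}}\bigr|\;\leq\;C\upsilon^{\gamma}\fint_{B_{2R}}\bigl|{\bf u}-({\bf u})_{B_{2R}}\bigr|+C_{\upsilon}R\Bigl(\fint_{B_{2R}}(\mu^{p}+|{\bf F}|^{p'})\Bigr)^{1/p},
\]
and then telescopes as in the proof of Theorem~\ref{Th1}.

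There is one genuine slip in your Step~3: the right-hand side must carry the \emph{oscillation} $\fint_{B_{2r}}|{\bf u}-({\bf u})_{B_{2r}}|$, not the full average $\fint_{B_{2r}}|{\bf u}|$. With $\fint|{\bf u}|$ on the right, the ``reabsorption'' you invoke in Step~4 does not close: summing your one-step bound produces $C\tau^{1-n/\theta}\sum_{j}\fint_{B_{2r_j}}|{\bf u}|$, and this sum cannot be absorbed into the telescoping differences $|({\bf u})_{B_{r_{j+1}}}-({\bf u})_{B_{r_j}}|$ regardless of how small $\tau$ is. The fix is exactly what the paper does: Caccioppoli for ${\bf v}$ gives $\fint|{\bf v}-({\bf v})_{B}|^{p}$, which you convert to $\fint|{\bf u}-({\bf u})_{B}|^{p}$ (plus potential terms) via the comparison estimate and \eqref{eqn-minimal2}; then the geometric factor $C\upsilon^{\gamma}$ multiplies an oscillation at the larger scale, and the iteration becomes a genuine contraction. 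Once you phrase Step~3 this way, your argument is the paper's argument. Your ``main obstacle'' paragraph is unnecessary: the reverse H\"older inequality \eqref{v-reverseholder} is taken from \cite{DiKaSc} and does not need to be re-derived here.
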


\begin{remark}
Indeed, even for simpler homogeneous elliptic systems of the type $-\operatorname{div}A\left(\nabla{\bf u}\right)=0$, the vector valued solutions $\bf u$ may be unbounded (cf. \cite{MS}).
It means that potential estimate such as \eqref{zero-estimate} does not hold
to general elliptic systems, unless additional assumptions are made on the dimension $n$ or the vector field $A$. The new result of Theorem \ref{Th2} indicates that the potential estimate \eqref{zero-estimate} is valid for $2\leq p\leq n<\theta$, which would imply the local boundedness of weak solution to nonlinear Stokes systems \eqref{model} in this case.
\end{remark}

The remainder of this paper is organized as follows. Section \ref{section2} consists of preliminary results and priori estimates, which will be fundamental to the proof of our main results. Section \ref{section3} is devoted to
the comparison estimates between the localized problem and the associated homogeneous problems. In the last section, we complete the proof of Theorem \ref{Th1} and \ref{Th2}\,, respectively.

\section{Preliminaries.}\label{section2}

In this section, we first provide a number of auxiliary results which are essential for the proof of the main results. And then, we introduce the comparison systems and establish some useful priori estimates.
In what follows, $C$ denotes a constant whose value may be different from line to line, and only the relevant dependence is specified.
\subsection{Auxiliary results.}
Let us begin with the following known technique lemma which is established in \cite[Theorem 4.1]{ADM}.
\begin{lemma}\label{existence-Johndomain}
Let $\Omega\in\mathbb{R}^{n}$, $n\geq2$ be a bounded John domain. Given $g\in L^{p}(\Omega)$ with $1<p<+\infty$ and  $\int_{\Omega}g\operatorname{d}\!x=0$, there exists at least one ${\bf \psi}\in W_{0}^{1,p}\left(\Omega,\mathbb{R}^{n}\right)$ satisfying
\begin{eqnarray*}
  \operatorname{div}{\bf \psi}&=& g \quad\mbox{in}\ \ \Omega\,,\\
  \|{\bf\nabla\psi}\|_{L^{p}(\Omega)} &\leq& C \|g\|_{L^{p}(\Omega)}\,,
\end{eqnarray*}
where the positive constant $C=C\left(\operatorname{diam}(\Omega),n,p\right)$. Particularly, if $\Omega=B_{R}(x_{0})$, then $C$ depends only on $n$ and $p$.
\end{lemma}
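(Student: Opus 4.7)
The plan is to prove the result via Bogovskii's explicit construction on the ball, then extend it to general bounded John domains by a Whitney decomposition combined with a chain argument. The argument splits naturally into a local step on star-shaped pieces and a global patching step.

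For the special case $\Omega=B_R(x_0)$, which is star-shaped with respect to any interior ball, fix a cutoff $\omega\in C^{\infty}_{c}(B_{R/2}(x_0))$ with $\int\omega\,dx=1$ and define Bogovskii's operator
\[\psi(x)=\int_{\Omega}g(y)\,(x-y)\int_{1}^{\infty}\omega\bigl(y+t(x-y)\bigr)\,t^{n-1}\,dt\,dy.\]
Using $\int_{\Omega}g\,dx=0$ together with an elementary change of variables one verifies directly that $\operatorname{div}\psi=g$ pointwise in $\Omega$ and $\psi$ vanishes near $\partial B_{R}$. Differentiating under the integral sign expresses $\partial_{i}\psi_{j}$ as a Calder\'on--Zygmund singular integral operator acting on $g$, plus an integrable remainder. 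The kernel satisfies the standard size, smoothness, and cancellation conditions, so $L^{p}$ boundedness for $1<p<\infty$ follows from the Calder\'on--Zygmund theorem, and after scaling the constant depends only on $n$ and $p$.

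For a general bounded John domain, carry out a Whitney decomposition $\Omega=\bigcup_{j}Q_{j}$ with the usual distance-to-boundary property. The John condition provides, for each Whitney cube $Q_{j}$, a ``John chain'' $Q_{0}\to Q_{j_{1}}\to\cdots\to Q_{j_{m}}=Q_{j}$ of successive neighbouring cubes with length controlled in a Hardy-summable sense and bounded overlap. Fix a smooth partition of unity $\{\eta_{j}\}$ subordinate to slight enlargements $\{Q_{j}^{\ast}\}$ and decompose $g=\sum_{j}g_{j}$ so that each $g_{j}$ has zero integral on $Q_{j}^{\ast}$; this is achieved by iteratively subtracting the mass $\int\eta_{j}g\,dx$ from $Q_{j}$ and re-depositing it into the central cube $Q_{0}$ along the chain, with the zero-mean hypothesis $\int_{\Omega}g\,dx=0$ ensuring that all transferred mass cancels at $Q_{0}$. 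Apply the ball version of Bogovskii's operator to each $g_{j}$ on $Q_{j}^{\ast}$ to obtain $\psi_{j}\in W^{1,p}_{0}(Q_{j}^{\ast})$ with $\operatorname{div}\psi_{j}=g_{j}$, and set $\psi=\sum_{j}\psi_{j}$.

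The main obstacle is the global $L^{p}$ estimate $\|\nabla\psi\|_{L^{p}(\Omega)}\leq C\|g\|_{L^{p}(\Omega)}$. The mass-transfer step introduces correction terms supported along the chains whose $L^{p}$ norms must be summed without loss, and controlling this sum amounts to a discrete Hardy-type inequality along the John chains; this is precisely where the John hypothesis is indispensable. Combined with the bounded overlap of $\{Q_{j}^{\ast}\}$ and the local Calder\'on--Zygmund bound this yields the global estimate with constant $C=C(\operatorname{diam}(\Omega),n,p)$. When $\Omega$ is a ball, the chain argument is unnecessary and the constant reduces to $C=C(n,p)$, as asserted.
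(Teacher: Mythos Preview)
The paper does not give its own proof of this lemma; it is quoted as a known result, with the reference ``established in \cite[Theorem 4.1]{ADM}'' (Acosta--Dur\'an--Muschietti). Your sketch---Bogovski\u{\i}'s explicit integral operator on a star-shaped domain combined with a Whitney decomposition and John-chain mass-transfer argument---is precisely the strategy carried out in that reference, so in spirit you are reproducing the cited proof rather than offering an alternative route.

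As a sketch it is accurate: the singular-integral analysis of the Bogovski\u{\i} kernel is standard, and you have correctly flagged the discrete Hardy inequality along the chains as the place where the John condition enters. The only caution is that what you wrote is an outline rather than a proof: the actual verification that the mass-transfer corrections sum in $L^{p}$ with a constant depending only on the John parameters (and hence on $\operatorname{diam}(\Omega)$, $n$, $p$) is the substantive part of \cite{ADM}, and filling it in carefully requires keeping track of the chain lengths and overlap constants more explicitly than you have done. But since the paper itself simply cites the result, your level of detail already exceeds what the paper provides.
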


In the sequel, we will use the following self-improving property of reverse H\"{o}lder type inequality.
\begin{lemma}\label{reverse-holder} {\rm (cf. \cite[Corollary 3.4]{DKS}.)}
Let ${\bf g},\,{\bf h}\in L_{\rm loc}^{1}(\Omega,\mathbb{R}^{n\times n})$. Assume that there exist constants $0<\tau<1$, $\gamma>1$ and $C_{0}>0$ such that
\begin{equation*}
  \left(\fint_{B_{\tau r}}|{\bf g}|^{\gamma}\operatorname{d}\!x\right)^{\frac{1}{\gamma}}\leq C_{0}\fint_{B_{r}}|{\bf g}|\operatorname{d}\!x+C_{0}\fint_{B_{r}}|{\bf h}|\operatorname{d}\!x
\end{equation*}
for every $B_{r}\subset\Omega$. Then for every $0<t\leq1$, there exists a positive constant $C=C(C_{0},n,\tau,t)$ such that
\begin{equation*}
  \left(\fint_{B_{\tau r}}|{\bf g}|^{\gamma}\operatorname{d}\!x\right)^{\frac{1}{\gamma}}\leq C\left(\fint_{B_{r}}|{\bf g}|^{t}\operatorname{d}\!x\right)^{\frac{1}{t}}+C\fint_{B_{r}}|{\bf h}|\operatorname{d}\!x\,.
\end{equation*}
\end{lemma}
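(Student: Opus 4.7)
The plan is to combine a Hölder--Young interpolation for Lebesgue averages with a Vitali covering of $B_\rho$ and a Giaquinta--Giusti absorbing iteration. The gap $\tau<1$ between the two sides of the hypothesis is precisely what leaves room for the iteration to close on the shell $B_{r}\setminus B_{\tau r}$.

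First, I would establish, for every $B_\rho\subset\Omega$, every $\varepsilon>0$, and every $0<t\leq 1<\gamma$, the averaged interpolation
\begin{equation*}
\fint_{B_\rho}|{\bf g}|\operatorname{d}\!x
\leq\varepsilon\left(\fint_{B_\rho}|{\bf g}|^{\gamma}\operatorname{d}\!x\right)^{\frac{1}{\gamma}}
+C(\varepsilon,\gamma,t)\left(\fint_{B_\rho}|{\bf g}|^{t}\operatorname{d}\!x\right)^{\frac{1}{t}}.
\end{equation*}
This follows from the multiplicative Hölder inequality $\|\cdot\|_{L^1}\leq\|\cdot\|_{L^\gamma}^{\theta}\|\cdot\|_{L^t}^{1-\theta}$ in averaged form, with $\theta=\gamma(1-t)/(\gamma-t)$, combined with the weighted Young inequality $a^{\theta}b^{1-\theta}\leq\varepsilon a+C(\varepsilon)b$.

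Second, I would fix $B_{r}\subset\Omega$ and concentric radii $\tau r\leq\rho<s\leq r$, and pick a Vitali family $\{B_{\tau(s-\rho)}(y_i)\}_{i}$ with $y_i\in B_\rho$ that covers $B_\rho$ with overlap bounded by $N=N(n)$ and satisfies $B_{s-\rho}(y_i)\subset B_s$. Applying the hypothesis on each $B_{s-\rho}(y_i)$, inserting the Step~1 interpolation into the $L^{1}$-averages that appear on the right, and summing with finite overlap yields
\begin{equation*}
\left(\fint_{B_\rho}|{\bf g}|^{\gamma}\operatorname{d}\!x\right)^{\frac{1}{\gamma}}
\leq C_{1}\varepsilon\left(\fint_{B_s}|{\bf g}|^{\gamma}\operatorname{d}\!x\right)^{\frac{1}{\gamma}}
+C_{2}(\varepsilon)\left(\frac{s}{s-\rho}\right)^{\alpha}\left[\left(\fint_{B_r}|{\bf g}|^{t}\operatorname{d}\!x\right)^{\frac{1}{t}}+\fint_{B_r}|{\bf h}|\operatorname{d}\!x\right]
\end{equation*}
for some $\alpha=\alpha(n,t)>0$. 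The crucial point is that the smallness term carries no blow-up in $s-\rho$: the identity $|B_{\tau(s-\rho)}(y_i)|\fint_{B_{s-\rho}(y_i)}|{\bf g}|^{\gamma}\operatorname{d}\!x=\tau^{n}\int_{B_{s-\rho}(y_i)}|{\bf g}|^{\gamma}\operatorname{d}\!x$, together with finite overlap, bounds the sum directly by $\tau^{n}N\int_{B_s}|{\bf g}|^{\gamma}\operatorname{d}\!x$.

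Finally, choosing $\varepsilon$ so that $C_{1}\varepsilon<1/2$ and applying the Giaquinta--Giusti iteration lemma to $\varphi(\sigma):=\bigl(\fint_{B_\sigma}|{\bf g}|^{\gamma}\operatorname{d}\!x\bigr)^{1/\gamma}$ on $[\tau r,r]$ absorbs the $\varphi(s)$ contribution; the ratio $(s/(s-\rho))^{\alpha}\leq(r/(s-\rho))^{\alpha}$ combines with the $(r(1-\tau))^{-\alpha}$ factor produced by the iteration lemma into a purely dimensionless constant depending on $\tau$, delivering the conclusion with $C=C(C_{0},n,\tau,t)$. The main obstacle is the bookkeeping in Step~2: because the hypothesis loses a factor $\tau$ between its two sides, the Vitali radii must be calibrated so that $\{B_{\tau(s-\rho)}(y_i)\}$ actually covers $B_\rho$ while $\{B_{s-\rho}(y_i)\}$ stays inside $B_s$, and one must verify that the coefficient in front of $\varphi(s)$ after summation is genuinely uniform in $\rho,s$; this uniformity is exactly what allows the iteration lemma to close the argument.
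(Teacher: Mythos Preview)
The paper does not supply a proof of this lemma at all; it is quoted directly from \cite[Corollary~3.4]{DKS} and used as a black box, so there is no in-paper argument to compare your proposal against.

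That said, your outline is essentially the standard proof of such self-improving reverse H\"older statements and would succeed. The interpolation in Step~1 is correct with the exponent $\theta=\gamma(1-t)/(\gamma-t)$ you give. In Step~2 the crucial uniformity you isolate does hold: after summing $\tau^{n}\int_{B_{s-\rho}(y_i)}|{\bf g}|^{\gamma}\leq \tau^{n}N\int_{B_s}|{\bf g}|^{\gamma}$ and dividing by $|B_\rho|$, the factor $|B_s|/|B_\rho|=(s/\rho)^{n}$ is bounded by $\tau^{-n}$ on the range $\tau r\leq\rho<s\leq r$, so the coefficient of $\varphi(s)$ is indeed independent of $\rho,s$ and can be made $<1/2$ by choice of $\varepsilon$. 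One small imprecision: the lower-order terms after summation carry a factor $(r/(s-\rho))^{\alpha}$ rather than $(s/(s-\rho))^{\alpha}$ (your count of the number of covering balls and the enlargement $\fint_{B_{s-\rho}(y_i)}\leq (r/(s-\rho))^{n}\fint_{B_r}$ both contribute powers of $r$, not $s$); but since you immediately pass to $(r/(s-\rho))^{\alpha}$ before invoking the Giaquinta--Giusti lemma, the argument closes with the stated dependence $C=C(C_0,n,\tau,t)$.
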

And then, combining the aforementioned Lemma \ref{reverse-holder} with Lemma 3.4 in \cite{DiKaSc}, one obtains a reserve H\"{o}lder type estimate for the symmetric gradient $D\bf u$ to \eqref{model}\,.
\begin{lemma}\label{reserve_Holder_Du}
Let ${\bf u}$ be a local weak solution of \eqref{model} with ${\bf F}\in L_{\rm loc}^{p'}(\Omega\,,\mathbb{R}_{\rm sym}^{n\times n})$\,. Then there exists a positive constant $C$ depending only on $n$, $\nu$, $L$ and $p$, such that
\begin{eqnarray}\label{reholderDu}
  \fint_{B_{R}(x_{0})}\left|D{\bf u}\right|^{p}\operatorname{d}\!x
\leq C\left(\fint_{B_{2R}(x_{0})}\!\left(\mu+\left|D{\bf u}\right|\right)\operatorname{d}\!x\right)^{p}+C\fint_{B_{2R}(x_{0})}\!\left|{\bf F}-{\bf F}_{0}\right|^{p'}\operatorname{d}\!x
\end{eqnarray}
for almost all $x_{0}\in\Omega$, any constant matrix ${\bf F}_{0}\in \mathbb{R}_{\rm sym}^{n\times n}$ and every $B_{2R}(x_{0})\subset\Omega$.
\end{lemma}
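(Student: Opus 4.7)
The plan is to establish the estimate in three conceptual steps: first derive a Caccioppoli-type inequality tailored to the Stokes system, then convert it into a preliminary reverse H\"{o}lder inequality via Sobolev--Poincar\'{e} together with Korn's inequality, and finally apply the Gehring-type self-improvement of Lemma \ref{reverse-holder} (together with Lemma 3.4 in \cite{DiKaSc}) to lower the exponent on the right-hand side down to one.

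For the Caccioppoli step I would fix $B_{2R}(x_{0})\subset\Omega$ and a constant matrix ${\bf F}_{0}\in\mathbb{R}^{n\times n}_{\rm sym}$, choose a cutoff $\eta\in C_{c}^{\infty}(B_{3R/2}(x_{0}))$ with $\eta\equiv 1$ on $B_{R}(x_{0})$ and $|\nabla\eta|\leq C/R$, and test the weak formulation from Definition \ref{Def-weaksolution} against the divergence-free function $\phi=\eta^{p}({\bf u}-({\bf u})_{B_{2R}(x_{0})})-\psi$. Here $\psi\in W_{0}^{1,p}(B_{3R/2}(x_{0}),\mathbb{R}^{n})$ is the Bogovskii-type corrector produced by Lemma \ref{existence-Johndomain}, chosen so that $\operatorname{div}\psi=\operatorname{div}(\eta^{p}({\bf u}-({\bf u})_{B_{2R}(x_{0})}))=p\,\eta^{p-1}\nabla\eta\cdot({\bf u}-({\bf u})_{B_{2R}(x_{0})})$ with $\|\nabla\psi\|_{L^{p}}\leq C\|\operatorname{div}\psi\|_{L^{p}}$; subtracting the constant ${\bf F}_{0}$ from ${\bf F}$ is admissible since $\operatorname{div}{\bf F}_{0}=0$. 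Combining the ellipticity and $p$-growth bounds in \eqref{structural-cons} with Young's inequality with $\varepsilon$ and the Bogovskii bound on $\psi$ then produces the Caccioppoli inequality
\[
\fint_{B_{R}(x_{0})}|D{\bf u}|^{p}\operatorname{d}\!x\leq\frac{C}{R^{p}}\fint_{B_{2R}(x_{0})}|{\bf u}-({\bf u})_{B_{2R}(x_{0})}|^{p}\operatorname{d}\!x+C\fint_{B_{2R}(x_{0})}|{\bf F}-{\bf F}_{0}|^{p'}\operatorname{d}\!x+C\mu^{p}.
\]
Invoking Sobolev--Poincar\'{e} at a sub-$p$ exponent $p_{*}<p$ together with Korn's inequality controls the first average by $C\big(\fint_{B_{2R}(x_{0})}|D{\bf u}|^{p_{*}}\operatorname{d}\!x\big)^{p/p_{*}}$, producing the preliminary reverse H\"{o}lder estimate
\[
\Big(\fint_{B_{R}(x_{0})}(\mu+|D{\bf u}|)^{p}\operatorname{d}\!x\Big)^{1/p}\leq C\Big(\fint_{B_{2R}(x_{0})}(\mu+|D{\bf u}|)^{p_{*}}\operatorname{d}\!x\Big)^{1/p_{*}}+C\Big(\fint_{B_{2R}(x_{0})}|{\bf F}-{\bf F}_{0}|^{p'}\operatorname{d}\!x\Big)^{1/p},
\]
which is essentially the content of Lemma 3.4 in \cite{DiKaSc}. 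Finally, the Gehring-type self-improvement of Lemma \ref{reverse-holder} applied with ${\bf g}=\mu+|D{\bf u}|$ and $\gamma=p$ reduces the exponent $p_{*}$ on the right-hand side down to $1$, and raising to the $p$-th power via $(a+b)^{p}\leq 2^{p-1}(a^{p}+b^{p})$ recovers \eqref{reholderDu}.

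The main technical hurdle is the Caccioppoli step: one must patch the cutoff argument with the Bogovskii corrector so that $\phi$ is genuinely divergence-free and admissible in Definition \ref{Def-weaksolution}, and then carefully track every error term through the degenerate ellipticity in \eqref{structural-cons}. The subquadratic case $1<p<2$ demands additional care, since the monotonicity carries the weight $(\mu^{2}+|\xi|^{2}+|\eta|^{2})^{(p-2)/2}$ and Uhlenbeck's $V$-quantity $V_{p}(\xi)=(\mu^{2}+|\xi|^{2})^{(p-2)/4}\xi$ must be employed to convert the ellipticity into a usable energy estimate. By contrast, the self-improvement step is routine once the preliminary reverse H\"{o}lder inequality is in place.
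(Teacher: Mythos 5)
Your proposal follows essentially the same route as the paper. The paper's own proof of this lemma is a one-sentence citation: it combines Lemma~3.4 of~\cite{DiKaSc} (which encapsulates exactly the Caccioppoli--plus--Sobolev--Poincar\'e--Korn step, corrected by a Bogovski\u{\i} operator so that the test function is divergence free) with the self-improving Lemma~\ref{reverse-holder} to lower the averaging exponent. Your sketch simply unpacks the first citation, and indeed the Caccioppoli inequality you re-derive coincides (up to constants and the extra $\mu^p$ term) with Lemma~\ref{caccioppoli-u} that the paper proves just below the statement in question. So there is no conceptual divergence.

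One technical point to tighten if you wrote this out in full: Lemma~\ref{reverse-holder}, as stated, has the exponent $1$ (not $p_{*}$) on the $\fint|\mathbf{g}|$ term in its hypothesis, while Sobolev--Poincar\'e/Korn produces a preliminary reverse H\"older with exponent $p_{*}=np/(n+p)$, which exceeds $1$ once $p\geq n/(n-1)$. So the claim that you ``apply Lemma~\ref{reverse-holder} with $\mathbf{g}=\mu+|D\mathbf{u}|$ and $\gamma=p$'' does not literally match the hypothesis when $p_{*}>1$; one either needs the slightly stronger formulation of Gehring--Giaquinta self-improvement (allowing a sub-$\gamma$ exponent on the right that is $\geq 1$) or must carry out the reduction in two stages by reapplying the Caccioppoli argument at a lower Sobolev exponent. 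This is precisely the gap that citing Lemma~3.4 of~\cite{DiKaSc} is meant to close, so the plan is sound, but the final sentence of your argument glosses over this case distinction.
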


Moreover, the following Korn's inequality will play an important role in the analysis of generalized nonlinear Stokes system.
\begin{lemma}\label{Korn} {\rm (cf. \cite{DRS}.)}
Let $B\subset\mathbb{R}^{n}$ be a ball and $1<p<+\infty$. Then for all ${\bf u}\in W_{0}^{1,p}(B, \mathbb{R}^{n})$, there holds that
\begin{equation}\label{Korn1}
  \int_{B}|\nabla{\bf u}|^{p}\operatorname{d}\!x\leq C\int_{B}|D{\bf u}|^{p}\operatorname{d}\!x\,.
\end{equation}
While if ${\bf u}\in W^{1,p}(B, \mathbb{R}^{n})$, then
\begin{equation}\label{Korn2}
  \int_{B}|\nabla{\bf u}-\left(\nabla{\bf u}\right)_{B}|^{p}\operatorname{d}\!x\leq C\int_{B}|D{\bf u}-\left(D{\bf u}\right)_{B}|^{p}\operatorname{d}\!x\,,
\end{equation}
where the positive constant $C$ depends only on $p$.
\end{lemma}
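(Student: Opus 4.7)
The plan is to reduce both inequalities to the pointwise algebraic identity
\begin{equation*}
\partial_{i}\partial_{j}u_{k}=\partial_{i}(D{\bf u})_{jk}+\partial_{j}(D{\bf u})_{ik}-\partial_{k}(D{\bf u})_{ij}\,,
\end{equation*}
which is immediate from $(D{\bf u})_{ij}=\tfrac{1}{2}(\partial_{i}u_{j}+\partial_{j}u_{i})$. The identity says that every second distributional derivative of ${\bf u}$ is a linear combination of first distributional derivatives of $D{\bf u}$; hence each component of $\nabla(\nabla{\bf u})$ lies in $W^{-1,p}(B)$ with norm dominated by $\|D{\bf u}-(D{\bf u})_{B}\|_{L^{p}(B)}$, since subtracting a constant matrix from $D{\bf u}$ leaves distributional derivatives unchanged.

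The main analytic tool will be the Ne\v{c}as/Lions negative-norm lemma on balls: for $1<p<\infty$ and $f\in\mathcal{D}'(B)$ with $\nabla f\in W^{-1,p}(B;\mathbb{R}^{n})$, one has $\|f-(f)_{B}\|_{L^{p}(B)}\leq C\,\|\nabla f\|_{W^{-1,p}(B)}$. This is established by duality: given $g\in L^{p'}(B)$ with $(g)_{B}=0$, one uses a Bogovski\u{\i}-type solution ${\bf w}\in W_{0}^{1,p'}(B,\mathbb{R}^{n})$ of $\operatorname{div}{\bf w}=g$ satisfying $\|\nabla{\bf w}\|_{L^{p'}(B)}\leq C\|g\|_{L^{p'}(B)}$, which is precisely the content of Lemma \ref{existence-Johndomain} applied with $\Omega=B$. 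Pairing $f$ against $g=\operatorname{div}{\bf w}$ and integrating by parts converts $\int_{B}(f-(f)_{B})g\,dx$ into $\langle -\nabla f,{\bf w}\rangle$, which is bounded by $\|\nabla f\|_{W^{-1,p}}\|\nabla{\bf w}\|_{L^{p'}}$.

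With these two ingredients, \eqref{Korn2} follows by applying the Ne\v{c}as lemma componentwise to $f_{ij}=\partial_{i}u_{j}$: the displayed identity together with $\|\partial_{k}g\|_{W^{-1,p}(B)}\leq\|g-(g)_{B}\|_{L^{p}(B)}$ yields $\|\nabla f_{ij}\|_{W^{-1,p}(B)}\leq C\|D{\bf u}-(D{\bf u})_{B}\|_{L^{p}(B)}$, and summing in $i,j$ finishes the argument. For \eqref{Korn1}, the cleanest route is to extend ${\bf u}$ by zero to a slightly larger concentric ball $B^{*}\supset\overline{B}$; since ${\bf u}\equiv 0$ on $\partial B^{*}$, the divergence theorem gives $(\nabla{\bf u})_{B^{*}}={\bf 0}=(D{\bf u})_{B^{*}}$, whence \eqref{Korn2} applied on $B^{*}$ specializes to $\|\nabla{\bf u}\|_{L^{p}(B^{*})}\leq C\|D{\bf u}\|_{L^{p}(B^{*})}$, which restricts to \eqref{Korn1}.

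The main obstacle is precisely the Bogovski\u{\i} construction underlying the Ne\v{c}as lemma: for general $1<p<\infty$ one needs a solver for $\operatorname{div}{\bf w}=g$ with an $L^{p'}$-estimate on $\nabla{\bf w}$, and this demands a singular-integral argument (not a Hilbert-space projection) to reach $p\neq 2$. Fortunately Lemma \ref{existence-Johndomain} already packages this, so once it is in hand the remainder of the proof is a short distributional computation and is insensitive to whether $p<2$ or $p>2$.
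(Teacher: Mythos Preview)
The paper does not give a proof of this lemma; it is simply quoted from \cite{DRS} without argument. Your proposal is a correct and classical route to Korn's inequality via the Ne\v{c}as--Lions negative-norm estimate, and it dovetails nicely with the paper's toolkit since the Bogovski\u{\i} solver you need is precisely Lemma~\ref{existence-Johndomain} specialized to a ball. Two small remarks: first, the extension to a larger ball $B^{*}$ in your derivation of \eqref{Korn1} is unnecessary, because for ${\bf u}\in W_{0}^{1,p}(B,\mathbb{R}^{n})$ one already has $(\nabla{\bf u})_{B}={\bf 0}=(D{\bf u})_{B}$ (approximate by $C_{c}^{\infty}(B)$ and integrate each $\partial_{i}u_{j}$), so \eqref{Korn2} on $B$ itself immediately yields \eqref{Korn1}; second, the constant your argument produces depends on $n$ as well as $p$ through the Bogovski\u{\i} constant in Lemma~\ref{existence-Johndomain}, which is what one should expect.
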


Finally, we will frequently use the following basic estimate to discuss oscillation estimates. Let $E$ be a measurable subset in $\mathbb{R}^{n}$. For any ${\bf f}\in L^{p}(E, \mathbb{R}^{ m})$ with $p\in[1,\infty)$ and $m\geq1$, we have
\begin{equation}\label{eqn-minimal2}
\left(\fint_{E}\left|\,{\bf f}(x)-\left({\bf f}\right)_{E}\,\right|^{p}\operatorname{d}\!x\right)^{\frac{1}{p}}\,\leq\,2\min_{{\bf h}\in\mathbb{R}^{ m}}\left(\fint_{E}\left|\,{\bf f}(x)-{\bf h}\,\right|^{p}\operatorname{d}\!x\right)^{\frac{1}{p}}\,.
\end{equation}

\subsection{Comparison systems and priori estimates.}
In this subsection, we establish some useful priori estimates to the Stokes system \eqref{model}
in comparison with the homogenous problem
\begin{equation}\label{ComparisonSystem}
\left\{\begin{array}{r@{\ \ }c@{\ \ }ll}
\operatorname{div}\mathcal{A}\left(x,D{\bf v}\right)-\bf\nabla\pi_{ v} & =& {\bf 0} & \mbox{in}\ \ B_{2R}(x_{0})\,, \\[0.05cm]
\operatorname{div}{\bf v}&=& 0 & \mbox{in}\ \ B_{2R}(x_{0})\,, \\[0.05cm]
{\bf v}&=&{\bf u} & \mbox{on}\ \ \partial B_{2R}(x_{0})\,,
\end{array}\right.
\end{equation}
and the limiting problem
\begin{equation}\label{ComparisonSystem-2}
\left\{\begin{array}{r@{\ \ }c@{\ \ }ll}
\operatorname{div}\bar{\mathcal{A}}(D{\bf w})-\bf\nabla\pi_{ w} & =&  {\bf 0} & \mbox{in}\ \ B_{\frac{3R}{2}}(x_{0})\,, \\[0.05cm]
\operatorname{div}{\bf w}&=& 0 & \mbox{in}\ \ B_{\frac{3R}{2}}(x_{0})\,, \\[0.05cm]
{\bf w} &=&  {\bf v} & \mbox{on}\ \ \partial B_{\frac{3R}{2}}(x_{0})\,,
\end{array}\right.
\end{equation}
where $\bar{\mathcal{A}}(D{\bf w})=\left(\mathcal{A}\left(\cdot,D{\bf w}\right)\right)_{B_{\frac{3R}{2}}(x_{0})}$, $B_{2R}(x_{0})\subset\Omega$, $\pi_{\bf v}$ and $\pi_{\bf w}$ are the associated pressure terms to $\bf v$ and $\bf w$ respectively.

We first show that the shear rate $D\bf v$ to the homogenous Stokes system \eqref{ComparisonSystem} can be controlled by the shear rate $D\bf u$.
\begin{lemma}\label{Du-control-Dv}
Let $\left({\bf u},\pi\right)$ be a weak solution pair to \eqref{model} with ${\bf F}\in L_{\rm loc}^{p'}(\Omega\,,\mathbb{R}_{\rm sym}^{n\times n})$\,. Then one can find a weak solution pair $\left({\bf v},\pi_{\bf v}\right)$ to \eqref{ComparisonSystem} such that
\begin{eqnarray}\label{DuconDv}
  \fint_{B_{2R}(x_{0})}\left|D{\bf v}\right|^{p}\operatorname{d}\!x
\leq C\fint_{B_{2R}(x_{0})}\!\left(\mu^{p}+\left|D{\bf u}\right|^{p}\right)\operatorname{d}\!x
\end{eqnarray}
for almost all $x_{0}\in\Omega$ and every $B_{2R}(x_{0})\subset\Omega$, where the positive constant $C=C(\nu, L, p)$.
\end{lemma}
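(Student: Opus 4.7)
The plan is to combine existence for \eqref{ComparisonSystem} via monotone operator theory with an energy identity obtained by testing the $\bf v$-equation against the admissible function ${\bf v}-{\bf u}$. Since ${\bf u}\in W^{1,p}_{0,\operatorname{div}}(\Omega,\mathbb{R}^{n})$, its restriction to $B_{2R}(x_0)$ belongs to $W^{1,p}(B_{2R}(x_0),\mathbb{R}^{n})$ with zero divergence, so the Dirichlet datum in \eqref{ComparisonSystem} is admissible. Existence of a unique ${\bf v}$ with ${\bf v}-{\bf u}\in W^{1,p}_{0,\operatorname{div}}(B_{2R}(x_0),\mathbb{R}^{n})$ and
\[\int_{B_{2R}(x_0)}\langle\mathcal{A}(x,D{\bf v}),D\phi\rangle\operatorname{d}\!x=0\qquad\forall\,\phi\in W^{1,p}_{0,\operatorname{div}}(B_{2R}(x_0),\mathbb{R}^{n})\]
then follows from the Browder-Minty theorem: the operator induced by $\mathcal{A}$ is strictly monotone, coercive (thanks to Korn's inequality \eqref{Korn1}) and hemicontinuous by \eqref{structural-cons}. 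The associated pressure $\pi_{\bf v}\in L^{p'}(B_{2R}(x_0))$ is then recovered via de~Rham's theorem, whose requisite surjectivity of the divergence operator on zero-mean $L^{p}$-functions on the ball is exactly the content of Lemma \ref{existence-Johndomain}.

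For the quantitative bound \eqref{DuconDv}, I would take $\phi={\bf v}-{\bf u}$ as test function to obtain
\[\int_{B_{2R}(x_0)}\langle\mathcal{A}(x,D{\bf v}),D{\bf v}\rangle\operatorname{d}\!x=\int_{B_{2R}(x_0)}\langle\mathcal{A}(x,D{\bf v}),D{\bf u}\rangle\operatorname{d}\!x.\]
On the left-hand side I would apply the ellipticity in \eqref{structural-cons} with $\eta={\bf 0}$ together with $\mathcal{A}(x,{\bf 0})={\bf 0}$, producing the pointwise lower bound $\nu(\mu^{2}+|D{\bf v}|^{2})^{(p-2)/2}|D{\bf v}|^{2}$. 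On the right-hand side I would invoke the growth condition in the form $|\mathcal{A}(x,D{\bf v})|\leq L(\mu^{2}+|D{\bf v}|^{2})^{(p-1)/2}\leq C(\mu^{p-1}+|D{\bf v}|^{p-1})$, followed by Young's inequality with conjugate exponents $p$ and $p'$, producing the upper bound $\epsilon(\mu^{p}+|D{\bf v}|^{p})+C(\epsilon)|D{\bf u}|^{p}$.

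The closing step rests on the standard pointwise equivalence
\[(\mu^{2}+|\xi|^{2})^{\frac{p-2}{2}}|\xi|^{2}+\mu^{p}\simeq\mu^{p}+|\xi|^{p}\qquad(p>1),\]
whose elementary proof by a case split $|\xi|\lessgtr\mu$ is uniform in both the sub- and superquadratic ranges. This turns the left-hand lower bound into $c(\mu^{p}+|D{\bf v}|^{p})-C\mu^{p}$ pointwise; then choosing $\epsilon$ sufficiently small so that the $|D{\bf v}|^{p}$ term produced on the right can be absorbed, and dividing by $|B_{2R}(x_0)|$, delivers \eqref{DuconDv}. The only subtle point I anticipate is guaranteeing that the absorption step works uniformly in the subquadratic regime $1<p<2$, where the coercivity has the ``weaker'' $V$-functional form rather than a clean $L^{p}$ lower bound; this is resolved entirely by the above algebraic equivalence, after which the remainder of the proof reduces to routine manipulations of the structural inequalities.
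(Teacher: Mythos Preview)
Your proposal is correct and follows the same energy-identity approach as the paper: test \eqref{ComparisonSystem} with ${\bf v}-{\bf u}$, apply ellipticity on the left and growth plus Young's inequality on the right, then absorb. The only substantive difference lies in how you close in the subquadratic range. The paper handles $1<p<2$ by a second application of Young's inequality, writing
\[
|D{\bf v}|^{p}\leq \epsilon_{2}(\mu^{2}+|D{\bf v}|^{2})^{p/2}+C(\epsilon_{2})(\mu^{2}+|D{\bf v}|^{2})^{(p-2)/2}|D{\bf v}|^{2},
\]
and then choosing $\epsilon_{2}$ and the earlier $\epsilon_{1}$ in tandem. You instead invoke the pointwise equivalence $(\mu^{2}+|\xi|^{2})^{(p-2)/2}|\xi|^{2}+\mu^{p}\simeq\mu^{p}+|\xi|^{p}$ once and for all, which packages the same case split into a single algebraic lemma and yields a unified absorption step. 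Both routes are standard and arrive at \eqref{DuconDv} with the same dependence $C=C(\nu,L,p)$; yours is marginally cleaner, the paper's is more explicit about where the subquadratic subtlety enters.
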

\begin{proof}
Since ${\bf u}-{\bf v}\in W_{0,\operatorname{div}}^{1,p}(B_{2R}(x_{0}),\mathbb{R}^{n})$, we choose $ {\bf u}-{\bf v}$ as a divergence free test function to the problem \eqref{ComparisonSystem}. Then it follows that
\begin{equation*}
  \fint_{B_{2R}(x_{0})} \left\langle\,\mathcal{A}\left(x,D{\bf v}\right),D{\bf u}-D{\bf  v}\right\rangle\operatorname{d}\!x=0\,.
\end{equation*}
By virtue of \eqref{structural-cons} and Young's inequality, we derive
\begin{eqnarray}\label{uconv-1}
 && \nu\,\fint_{B_{2R}(x_{0})}\left(\mu^{2}+|D{\bf v}|^{2}\right)^{\frac{p-2}{2}}\left|D{\bf  v}\right|^{2}\operatorname{d}\!x\nonumber\\
&\leq& \,\fint_{B_{2R}(x_{0})} \left\langle\,\mathcal{A}\left(x,D{\bf v}\right),D{\bf  v}\right\rangle\operatorname{d}\!x \nonumber\\
&=& \,\fint_{B_{2R}(x_{0})} \left\langle\,\mathcal{A}\left(x,D{\bf v}\right),D{\bf  u}\right\rangle\operatorname{d}\!x \nonumber\\
&\leq&\epsilon_{1}\,\fint_{B_{2R}(x_{0})}\left|\mathcal{A}\left(x,D{\bf v}\right)\right|^{p'}\operatorname{d}\!x+
C\left(\epsilon_{1},p\right)\fint_{B_{2R}(x_{0})}\left|D{\bf  u}\right|^{p}\operatorname{d}\!x \nonumber\\
&\leq& \epsilon_{1}c_{1}(p,L)\,\fint_{B_{2R}(x_{0})}\left(\mu^{p}+\left|D{\bf v}\right|^{p}\right)\operatorname{d}\!x+
C\left(\epsilon_{1},p\right)\fint_{B_{2R}(x_{0})}\left|D{\bf  u}\right|^{p}\operatorname{d}\!x\,.
\end{eqnarray}
For the case $p\geq2$, one has
\begin{eqnarray*}
&& \nu\,\fint_{B_{2R}(x_{0})}|D{\bf v}|^{p}\operatorname{d}\!x\\
 &\leq& \nu\,\fint_{B_{2R}(x_{0})}\left(\mu^{2}+|D{\bf v}|^{2}\right)^{\frac{p-2}{2}}\left|D{\bf  v}\right|^{2}\operatorname{d}\!x\\
&\leq& \epsilon_{1}c_{1}\,\fint_{B_{2R}(x_{0})}\left(\mu^{p}+\left|D{\bf v}\right|^{p}\right)\operatorname{d}\!x+
C\left(\epsilon_{1},p\right)\fint_{B_{2R}(x_{0})}\left|D{\bf  u}\right|^{p}\operatorname{d}\!x\,.
\end{eqnarray*}
By selecting $\epsilon_{1}=\frac{\nu}{2c_{1}}$\,, we obtain that
\begin{eqnarray}\label{uconv-2}
\fint_{B_{2R}(x_{0})}|D{\bf v}|^{p}\operatorname{d}\!x\leq
C\left(p,\nu,L\right)\fint_{B_{2R}(x_{0})}\left(\mu^{p}+\left|D{\bf  u}\right|^{p}\right)\operatorname{d}\!x\,.
\end{eqnarray}
While for $1<p<2$, we apply Young's inequality and \eqref{uconv-1} to derive
\begin{eqnarray*}
&&\fint_{B_{2R}(x_{0})}|D{\bf v}|^{p}\operatorname{d}\!x\\
&\leq&\epsilon_{2}\fint_{B_{2R}(x_{0})}\left(\mu^{2}+|D{\bf v}|^{2}\right)^{\frac{p}{2}}\operatorname{d}\!x+
 C(\epsilon_{2},p)\fint_{B_{2R}(x_{0})}\left(\mu^{2}+|D{\bf v}|^{2}\right)^{\frac{p-2}{2}}\left|D{\bf  v}\right|^{2}\operatorname{d}\!x\\
&\leq&\epsilon_{2}c_{2}(p)\fint_{B_{2R}(x_{0})}\left(\mu^{p}+|D{\bf v}|^{p}\right)\operatorname{d}\!x +\epsilon_{1}c_{3}(\epsilon_{2},p,L,\nu)\,\fint_{B_{2R}(x_{0})}\left(\mu^{p}+\left|D{\bf v}\right|^{p}\right)\operatorname{d}\!x\\
&&+\,C\left(\epsilon_{1},\epsilon_{2},p,\nu\right)\fint_{B_{2R}(x_{0})}\left|D{\bf  u}\right|^{p}\operatorname{d}\!x\,.
\end{eqnarray*}
Now selecting $\epsilon_{2}=\frac{1}{4c_{2}}$, and then choosing $\epsilon_{1}=\frac{1}{4c_{3}}$, we obtain
\begin{eqnarray}\label{uconv-3}
\fint_{B_{2R}(x_{0})}|D{\bf v}|^{p}\operatorname{d}\!x\leq
C\left(p,\nu,L\right)\fint_{B_{2R}(x_{0})}\left(\mu^{p}+\left|D{\bf  u}\right|^{p}\right)\operatorname{d}\!x\,.
\end{eqnarray}
Finally, a combination of \eqref{uconv-2} and \eqref{uconv-3} yields \eqref{DuconDv} holds for $1<p<+\infty$, which completes the proof of Lemma \ref{Du-control-Dv}\,.
\end{proof}

Next, we deduce that the shear rate $D\bf w$ to the limiting Stokes system \eqref{ComparisonSystem-2} can be estimated in terms of the shear rate $D\bf v$ to the homogeneous Stokes system \eqref{ComparisonSystem}\,.
\begin{lemma}\label{Dv-control-Dw}
Let $\left({\bf v},\pi_{v}\right)$ be a weak solution pair to \eqref{ComparisonSystem}\,. Then one can find a weak solution pair $\left({\bf w},\pi_{\bf w}\right)$ to \eqref{ComparisonSystem-2} such that
\begin{eqnarray}\label{DvconDw}
  \fint_{B_{\frac{3R}{2}}(x_{0})}\left|D{\bf w}\right|^{p}\operatorname{d}\!x
\leq C\fint_{B_{\frac{3R}{2}}(x_{0})}\!\left(\mu^{p}+\left|D{\bf v}\right|^{p}\right)\operatorname{d}\!x
\end{eqnarray}
for almost all $x_{0}\in\Omega$ and every $B_{\frac{3R}{2}}(x_{0})\subset\Omega$, where the positive constant $C=C(\nu, L, p)$.
\end{lemma}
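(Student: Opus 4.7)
The plan is to mimic the proof of Lemma \ref{Du-control-Dv} almost verbatim, with $(\mathbf{u},\mathbf{v},\mathcal{A}(x,\cdot),B_{2R}(x_0))$ replaced by $(\mathbf{v},\mathbf{w},\bar{\mathcal{A}}(\cdot),B_{\frac{3R}{2}}(x_0))$. The only genuinely new observation is that the averaged vector field $\bar{\mathcal{A}}$ inherits the structural conditions \eqref{structural-cons} with the same constants $\nu$, $L$, $\mu$, $p$: for each fixed $\xi,\eta\in\mathbb{R}^{n\times n}$ the pointwise monotonicity and Lipschitz bounds for $\mathcal{A}(x,\cdot)$ are preserved under integration in $x$, and $\bar{\mathcal{A}}(\mathbf{0})=\mathbf{0}$ follows from $\mathcal{A}(x,\mathbf{0})=\mathbf{0}$.

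First I would record the existence of a weak solution pair $(\mathbf{w},\pi_{\mathbf w})$ to \eqref{ComparisonSystem-2}; this is standard monotone operator theory once one knows $\bar{\mathcal{A}}$ is a Carath\'eodory field satisfying \eqref{structural-cons}, and the boundary datum $\mathbf{v}$ belongs to the appropriate trace space. Since $\mathbf{v}-\mathbf{w}\in W_{0,\operatorname{div}}^{1,p}(B_{\frac{3R}{2}}(x_0),\mathbb{R}^n)$, it is an admissible divergence-free test function for \eqref{ComparisonSystem-2}, which yields
\begin{equation*}
\fint_{B_{\frac{3R}{2}}(x_0)}\bigl\langle\bar{\mathcal{A}}(D\mathbf{w}),D\mathbf{w}\bigr\rangle\operatorname{d}\!x=\fint_{B_{\frac{3R}{2}}(x_0)}\bigl\langle\bar{\mathcal{A}}(D\mathbf{w}),D\mathbf{v}\bigr\rangle\operatorname{d}\!x\,.
\end{equation*}
From here I would apply the ellipticity lower bound on the left (with $\eta=\mathbf{0}$, using $\bar{\mathcal{A}}(\mathbf{0})=\mathbf{0}$), Young's inequality on the right, and the growth upper bound $|\bar{\mathcal{A}}(D\mathbf{w})|^{p'}\leq c(p,L)(\mu^{p}+|D\mathbf{w}|^{p})$, exactly reproducing the chain of estimates \eqref{uconv-1}.

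Then I would split into the two regimes $p\geq 2$ and $1<p<2$ just as before. In the superquadratic case the integrand $(\mu^2+|D\mathbf{w}|^2)^{(p-2)/2}|D\mathbf{w}|^2$ already dominates $|D\mathbf{w}|^p$, so one directly selects $\epsilon_{1}$ small enough to absorb the $|D\mathbf{w}|^p$ term from the right-hand side into the left. In the subquadratic case I would first control $|D\mathbf{w}|^p$ by the ``correct'' quantity $(\mu^2+|D\mathbf{w}|^2)^{(p-2)/2}|D\mathbf{w}|^2$ via Young's inequality with a small parameter $\epsilon_{2}$, then again absorb with $\epsilon_{1}$; this is the two-parameter absorption used to produce \eqref{uconv-3}. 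Combining the two cases yields \eqref{DvconDw} with $C=C(\nu,L,p)$.

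I do not expect any significant obstacle: the proof is essentially a copy of that of Lemma \ref{Du-control-Dv}. The only point that deserves a brief verification, rather than being an obstacle, is confirming that the averaged coefficients $\bar{\mathcal{A}}$ enjoy the same ellipticity and $p$-growth constants as $\mathcal{A}(x,\cdot)$, so that the constants in \eqref{DvconDw} do not deteriorate and depend only on $(\nu,L,p)$.
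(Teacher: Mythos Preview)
Your proposal is correct and follows essentially the same approach as the paper: test \eqref{ComparisonSystem-2} with the divergence-free function $\mathbf{v}-\mathbf{w}$, use ellipticity and growth of $\bar{\mathcal{A}}$ together with Young's inequality to obtain the analogue of \eqref{uconv-1}, and then absorb as in Lemma~\ref{Du-control-Dv} by splitting into $p\ge 2$ and $1<p<2$. Your explicit remark that $\bar{\mathcal{A}}$ inherits \eqref{structural-cons} with the same constants is the one point the paper leaves implicit (it instead invokes H\"older's inequality when bounding $|(\mathcal{A}(\cdot,D\mathbf{w}))_{B_{3R/2}}|^{p'}$), but the resulting estimate is identical.
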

\begin{proof}
Since ${\bf v}-{\bf w}\in W_{0,\operatorname{div}}^{1,p}(B_{\frac{3R}{2}}(x_{0}),\mathbb{R}^{n})$, we select $ {\bf v}-{\bf w}$ as
a divergence free test function for \eqref{ComparisonSystem-2}, that is to say,
\begin{equation*}
  \fint_{B_{\frac{3R}{2}}(x_{0})} \left\langle\,\bar{\mathcal{A}}\left(D{\bf w}\right),D{\bf v}-D{\bf w}\right\rangle\operatorname{d}\!x=0\,.
\end{equation*}
Applying \eqref{structural-cons}, Young's inequality and H\"{o}lder's inequality, one derives that
\begin{eqnarray}\label{vconw-1}
 && \nu\,\fint_{B_{\frac{3R}{2}}(x_{0})}\left(\mu^{2}+|D{\bf w}|^{2}\right)^{\frac{p-2}{2}}\left|D{\bf  w}\right|^{2}\operatorname{d}\!x\nonumber\\
&\leq& \fint_{B_{\frac{3R}{2}}(x_{0})} \left\langle\,\left(\mathcal{A}\left(\cdot,D{\bf w}\right)\right)_{B_{\frac{3R}{2}}(x_{0})},D{\bf w}\right\rangle\operatorname{d}\!x \nonumber\\
&=& \fint_{B_{\frac{3R}{2}}(x_{0})} \left\langle\,\left(\mathcal{A}\left(\cdot,D{\bf w}\right)\right)_{B_{\frac{3R}{2}}(x_{0})},D{\bf v}\right\rangle\operatorname{d}\!x \nonumber\\
&\leq&\epsilon_{3}\,\fint_{B_{\frac{3R}{2}}(x_{0})}\left|\left(\mathcal{A}\left(\cdot,D{\bf w}\right)\right)_{B_{\frac{3R}{2}}(x_{0})}\right|^{p'}\operatorname{d}\!x+
C\left(\epsilon_{3},p\right)\fint_{B_{\frac{3R}{2}}(x_{0})}\left|D{\bf v}\right|^{p}\operatorname{d}\!x \nonumber\\
&\leq& \epsilon_{3}c_{4}(p,L)\,\fint_{B_{\frac{3R}{2}}(x_{0})}\left(\mu^{p}+\left|D{\bf w}\right|^{p}\right)\operatorname{d}\!x+
C\left(\epsilon_{3},p\right)\fint_{B_{\frac{3R}{2}}(x_{0})}\left|D{\bf v}\right|^{p}\operatorname{d}\!x\,.
\end{eqnarray}
By proceeding similarly as in the proof of Lemma \ref{Du-control-Dv}, splitting into two different cases $p\geq2$ and $1<p<2$, and then choosing the appropriate positive constant $\epsilon_{3}$, one concludes that
\begin{eqnarray}\label{vconw-2}
\fint_{B_{\frac{3R}{2}}(x_{0})}|D{\bf w}|^{p}\operatorname{d}\!x\leq
C\left(p,\nu,L\right)\fint_{B_{\frac{3R}{2}}(x_{0})}\left(\mu^{p}+\left|D{\bf  v}\right|^{p}\right)\operatorname{d}\!x
\end{eqnarray}
for any $1<p<+\infty$. Thus the proof of Lemma \ref{Dv-control-Dw} is completed.
\end{proof}

Another basic tool we use is a Caccioppoli type inequality to the Stokes system \eqref{model}, which takes up the rest of this subsection.
\begin{lemma}\label{caccioppoli-u}
Let $\left({\bf u},\pi\right)$ be a weak solution pair to \eqref{model} with ${\bf F}\in L_{\rm loc}^{p'}(\Omega\,,\mathbb{R}_{\rm sym}^{n\times n})$\,. There holds
\begin{equation}\label{caccioppoli}
   \fint_{B_{R}(x_{0})}|D{\bf u}|^{p}\operatorname{d}\!x\leq\frac{C}{R^{p}}\fint_{B_{2R}(x_{0})}|{\bf u}-\left(\bf u\right)_{B_{2R}(x_{0})}|^{p}\operatorname{d}\!x+C\fint_{B_{2R}(x_{0})}\left(\mu^{p}+|{\bf F}-{\bf F}_{0}|^{p'}\right)\operatorname{d}\!x
\end{equation}
for almost all $x_{0}\in\Omega$, any constant matrix ${\bf F}_{0}\in \mathbb{R}_{\rm sym}^{n\times n}$ and every $B_{2R}(x_{0})\subset\Omega$, where the positive constant $C$ depends only on $n$, $\nu$, $L$ and $p$.
\end{lemma}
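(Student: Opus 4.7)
The plan is to run the classical Caccioppoli testing with $\varphi=\eta^{p}({\bf u}-({\bf u})_{B_{2R}(x_{0})})$, where $\eta$ is a smooth cutoff adapted to concentric balls $B_{r}\subset B_{s}\subset B_{2R}(x_{0})$ with $\eta\equiv 1$ on $B_{r}$, $\operatorname{supp}\eta\subset B_{s}$, and $|\nabla\eta|\leq C/(s-r)$. Because $\varphi$ is not divergence free we cannot insert it directly into the divergence-free weak formulation, and inserting it into the pressure formulation of Definition \ref{Def-weaksolution} would leave a pressure term we wish to avoid in \eqref{caccioppoli}. The remedy is to invoke Lemma \ref{existence-Johndomain} on the ball $B_{s}$ to produce a Bogovski\u{\i} corrector ${\bf\psi}\in W^{1,p}_{0}(B_{s},\mathbb{R}^{n})$ with
\[
\operatorname{div}{\bf\psi}=\operatorname{div}\!\bigl(\eta^{p}({\bf u}-({\bf u})_{B_{2R}(x_{0})})\bigr)=p\eta^{p-1}\nabla\eta\cdot({\bf u}-({\bf u})_{B_{2R}(x_{0})}),
\]
and $\Norm{\nabla{\bf\psi}}_{L^{p}(B_{s})}\leq C(s-r)^{-1}\Norm{{\bf u}-({\bf u})_{B_{2R}(x_{0})}}_{L^{p}(B_{s})}$; the right-hand side has zero mean by the divergence theorem, so the lemma applies. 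Then $\tilde{\varphi}:=\eta^{p}({\bf u}-({\bf u})_{B_{2R}(x_{0})})-{\bf\psi}$ lies in $W^{1,p}_{0,\operatorname{div}}(B_{2R}(x_{0}),\mathbb{R}^{n})$ and is a legitimate divergence-free test function that eliminates the pressure altogether.

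Plugging $\tilde{\varphi}$ into the weak formulation, and using that for any constant symmetric matrix ${\bf F}_{0}$ the integral $\int\langle{\bf F}_{0},D\tilde{\varphi}\rangle\operatorname{d}\!x$ vanishes (integration by parts on a divergence-free, compactly supported field), yields
\[
\int\eta^{p}\langle\mathcal{A}(x,D{\bf u}),D{\bf u}\rangle\operatorname{d}\!x = -\!\int p\eta^{p-1}\langle\mathcal{A}(x,D{\bf u}),\nabla\eta\odot({\bf u}-({\bf u})_{B_{2R}(x_{0})})\rangle\operatorname{d}\!x+\!\int\langle\mathcal{A}(x,D{\bf u}),D{\bf\psi}\rangle\operatorname{d}\!x+\!\int\langle{\bf F}-{\bf F}_{0},D\tilde{\varphi}\rangle\operatorname{d}\!x.
\]
The coercivity assumption \eqref{structural-cons} with $\eta={\bf 0}$ and $\mathcal{A}(x,{\bf 0})={\bf 0}$ bounds the left-hand side from below by $\nu\int\eta^{p}(\mu^{2}+|D{\bf u}|^{2})^{\frac{p-2}{2}}|D{\bf u}|^{2}\operatorname{d}\!x$, while the growth bound gives $|\mathcal{A}(x,D{\bf u})|\leq L(\mu^{p-1}+|D{\bf u}|^{p-1})$. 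Splitting into the superquadratic case $p\geq 2$ (where the coercive term dominates $\int\eta^{p}|D{\bf u}|^{p}\operatorname{d}\!x$ directly) and the subquadratic case $1<p<2$ (where one first controls $\int\eta^{p}(\mu^{2}+|D{\bf u}|^{2})^{\frac{p}{2}}\operatorname{d}\!x$ by Young's inequality exactly as in the proofs of Lemmas \ref{Du-control-Dv} and \ref{Dv-control-Dw}), and applying Young's inequality to each of the three right-hand side terms using the Bogovski\u{\i} bound on $\Norm{\nabla{\bf\psi}}_{L^{p}}$, one arrives at
\[
\int_{B_{r}}|D{\bf u}|^{p}\operatorname{d}\!x\leq C\varepsilon\!\int_{B_{s}}\bigl(\mu^{p}+|D{\bf u}|^{p}\bigr)\operatorname{d}\!x+\frac{C(\varepsilon)}{(s-r)^{p}}\!\int_{B_{2R}(x_{0})}\!|{\bf u}-({\bf u})_{B_{2R}(x_{0})}|^{p}\operatorname{d}\!x+C(\varepsilon)\!\int_{B_{2R}(x_{0})}\!|{\bf F}-{\bf F}_{0}|^{p'}\operatorname{d}\!x
\]
for every $R\leq r<s\leq 2R$ and $\varepsilon\in(0,1)$.

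The principal obstacle is that the Bogovski\u{\i} correction ${\bf\psi}$ is supported in $B_{s}$ rather than in $\{\eta\equiv 1\}$, so the first term on the right contains $\int_{B_{s}}|D{\bf u}|^{p}\operatorname{d}\!x$ which cannot be absorbed by a single application; this is why the two-radii formulation is essential. One selects $\varepsilon$ small and invokes the standard Giaquinta--Giusti type iteration lemma for nonnegative, bounded, nondecreasing functions $\Phi(r):=\int_{B_{r}}|D{\bf u}|^{p}\operatorname{d}\!x$ satisfying $\Phi(r)\leq\theta\Phi(s)+A(s-r)^{-p}+B$ with $\theta<1$ to conclude $\Phi(R)\leq C(AR^{-p}+B)$. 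Dividing by $|B_{R}|$ and using $|B_{2R}|/|B_{R}|=2^{n}$ delivers exactly \eqref{caccioppoli}.
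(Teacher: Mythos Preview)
Your proposal is correct and follows essentially the same approach as the paper: both use the test function $\eta^{p}({\bf u}-({\bf u})_{B_{2R}(x_{0})})$ corrected by a Bogovski\u{\i} solution to become divergence free, exploit the structural conditions \eqref{structural-cons} together with Young's inequality and the Bogovski\u{\i} gradient bound, and then close via the two-radii iteration lemma of Giaquinta--Giusti type. The only cosmetic differences are that the paper cites \cite[Theorem~6.6]{DRS} directly for the $D\psi$ bound (rather than Lemma~\ref{existence-Johndomain}) and treats the cases $p\ge 2$ and $1<p<2$ in a single Young's-inequality step rather than separating them explicitly.
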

\begin{proof}
Let $\varphi=\eta^{p}\left({\bf u}-\left(\bf u\right)_{B_{2R}(x_{0})}\right)$ with $\eta\in C_{0}^{\infty}\left(B_{R_{2}}(x_{0})\right)$ be a cut-off function between $0$ and $1$ such that $\eta\equiv1$ in $B_{R_{1}}(x_{0})$, $\eta\equiv0$ in $B_{R_{2}}^{c}(x_{0})$ and $\left|\nabla\eta\right|\leq\frac{C}{R_{2}-R_{1}}$ in $B_{R_{2}}(x_{0})\setminus \overline{B_{R_{1}}(x_{0})}$ for all $R<R_{1}<R_{2}<2R$. We first correct $\varphi$ to be a divergence free function by
virtue of the Bogovski\u{\i} operator "Bog" which is introduced in \cite{Bog}. Let $\psi=\operatorname{Bog}\left(\operatorname{div}\varphi\right)$ be a special solution to the following auxiliary problem
\begin{equation}\label{auxiliarypro}
\left\{\begin{array}{r@{\ \ }c@{\ \ }ll}
\operatorname{div}\psi & =& \operatorname{div}\varphi& \mbox{in}\ \ B_{R_{2}}(x_{0})\,, \\[0.05cm]
\psi&=& 0 & \mbox{on}\ \ \partial B_{R_{2}}(x_{0})\,.
\end{array}\right.
\end{equation}
As a consequence of \cite[Theorem 6.6]{DRS}, we have
\begin{equation}\label{r-1}
  \fint_{B_{R_{2}}(x_{0})}|D\psi|^{p}\operatorname{d}\!x\leq C(n,p)\fint_{B_{R_{2}}(x_{0})}|\operatorname{div}\varphi|^{p}\operatorname{d}\!x\,.
\end{equation}
Then it is clear to select $\phi=\varphi-\psi\in
W_{0,\operatorname{div}}^{1,p}(B_{R_{2}}(x_{0}),\mathbb{R}^{n})$ as a divergence free text function to \eqref{model}, that is to say
\begin{equation*}
  \int_{B_{R_{2}}(x_{0})}\big\langle \mathcal{A}(x,D{\bf u})\,,D\phi\big\rangle\operatorname{d}\!x
= \int_{B_{R_{2}}(x_{0})}\big\langle{\bf F}-{\bf F}_{0}\,,D\phi\big\rangle\operatorname{d}\!x,
\end{equation*}
where
$$D\phi=\eta^{p}D{\bf u}+p\eta^{p-1}\frac{\left({\bf u}-\left(\bf u\right)_{B_{2R}(x_{0})}\right)\otimes\nabla\eta+\left(\left({\bf u}-\left(\bf u\right)_{B_{2R}(x_{0})}\right)\otimes\nabla\eta\right)^{T}}{2}-D\psi.$$
By a direct computation together with \eqref{structural-cons}, we have
\begin{eqnarray*}
   &&\nu \fint_{B_{R_{2}}(x_{0})}\eta^{p}\left(\mu^{2}+|D{\bf u}|^{2}\right)^{\frac{p-2}{2}}|D{\bf u}|^{2}\operatorname{d}\!x \\
   &\leq& Lp\fint_{B_{R_{2}}(x_{0})}\eta^{p-1}\left(\mu^{2}+|D{\bf u}|^{2}\right)^{\frac{p-2}{2}}|D{\bf u}|\left|\left({\bf u}-\left(\bf u\right)_{B_{2R}(x_{0})}\right)\otimes\nabla\eta\right|\operatorname{d}\!x\\
   &&+\,L\fint_{B_{R_{2}}(x_{0})}
   \left(\mu^{2}+|D{\bf u}|^{2}\right)^{\frac{p-2}{2}}|D{\bf u}||D\psi|\operatorname{d}\!x \\
   &&+\, \fint_{B_{R_{2}}(x_{0})}|{\bf F}-{\bf F}_{0}|\left(\eta^{p}|D{\bf u}|+p\eta^{p-1}\left|\left({\bf u}-\left(\bf u\right)_{B_{2R}(x_{0})}\right)\otimes\nabla\eta\right|+|D\psi|\right)\operatorname{d}\!x\,.
\end{eqnarray*}
Next by applying Young's inequality, \eqref{r-1} and the assumptions of $\eta$ to the above inequalities, and combining with $\operatorname{div}u=0$, we deduce that
\begin{eqnarray*}
  && \fint_{B_{R_{2}}(x_{0})}\eta^{p}|D{\bf u}|^{p}\operatorname{d}\!x \\
  &\leq&\epsilon_{4}\fint_{B_{R_{2}}(x_{0})}\eta^{p}|D{\bf u}|^{p}\operatorname{d}\!x+C(\epsilon_{4},p) \fint_{B_{R_{2}}(x_{0})}\eta^{p}\left(\mu^{2}+|D{\bf u}|^{2}\right)^{\frac{p-2}{2}}|D{\bf u}|^{2}\operatorname{d}\!x\\
   &\leq& \frac{1}{4}\fint_{B_{R_{2}}(x_{0})}|D{\bf u}|^{p}\operatorname{d}\!x+(\epsilon_{4}+\epsilon_{5})\fint_{B_{R_{2}}(x_{0})}\eta^{p}|D{\bf u}|^{p}\operatorname{d}\!x+C(p,\nu, L)\fint_{B_{R_{2}}(x_{0})}|D\psi|^{p}\operatorname{d}\!x \\
   &&+\frac{C(\epsilon_{4},\epsilon_{5},L,p,\nu)}{(R_{2}-R_{1})^{p}}\fint_{B_{R_{2}}(x_{0})}|{\bf u}-\left(\bf u\right)_{B_{2R}(x_{0})}|^{p}\operatorname{d}\!x +\,C(\epsilon_{4},\epsilon_{5},L,p,\nu)\fint_{B_{R_{2}}(x_{0})}\left(\mu^{p}+|{\bf F}-{\bf F}_{0}|^{p'}\right)\operatorname{d}\!x\\
   &\leq& \frac{1}{4}\fint_{B_{R_{2}}(x_{0})}|D{\bf u}|^{p}\operatorname{d}\!x+(\epsilon_{4}+\epsilon_{5})\fint_{B_{R_{2}}(x_{0})}\eta^{p}|D{\bf u}|^{p}\operatorname{d}\!x+C(n,p,\nu, L)\fint_{B_{R_{2}}(x_{0})}|\operatorname{div}\varphi|^{p}\operatorname{d}\!x \\
   &&+\frac{C(\epsilon_{4},\epsilon_{5},L,p,\nu)}{(R_{2}-R_{1})^{p}}\fint_{B_{R_{2}}(x_{0})}|{\bf u}-\left(\bf u\right)_{B_{2R}(x_{0})}|^{p}\operatorname{d}\!x +\,C(\epsilon_{4},\epsilon_{5},L,p,\nu)\fint_{B_{R_{2}}(x_{0})}\left(\mu^{p}+|{\bf F}-{\bf F}_{0}|^{p'}\right)\operatorname{d}\!x\\
   &=& \frac{1}{4}\fint_{B_{R_{2}}(x_{0})}|D{\bf u}|^{p}\operatorname{d}\!x+(\epsilon_{4}+\epsilon_{5})\fint_{B_{R_{2}}(x_{0})}\eta^{p}|D{\bf u}|^{p}\operatorname{d}\!x+C\fint_{B_{R_{2}}(x_{0})}\left|p\eta^{p-1}\nabla\eta\cdot\left({\bf u}-\left(\bf u\right)_{B_{2R}(x_{0})}\right)\right|^{p}\operatorname{d}\!x \\
   &&+\frac{C}{(R_{2}-R_{1})^{p}}\fint_{B_{R_{2}}(x_{0})}|{\bf u}-\left(\bf u\right)_{B_{2R}(x_{0})}|^{p}\operatorname{d}\!x +\,C\fint_{B_{R_{2}}(x_{0})}\left(\mu^{p}+|{\bf F}-{\bf F}_{0}|^{p'}\right)\operatorname{d}\!x\\
   &\leq& \frac{1}{4}\fint_{B_{R_{2}}(x_{0})}|D{\bf u}|^{p}\operatorname{d}\!x+(\epsilon_{4}+\epsilon_{5})\fint_{B_{R_{2}}(x_{0})}\eta^{p}|D{\bf u}|^{p}\operatorname{d}\!x+\frac{C(\epsilon_{4},\epsilon_{5},n,L,p,\nu)}{(R_{2}-R_{1})^{p}}\fint_{B_{R_{2}}(x_{0})}|{\bf u}-\left(\bf u\right)_{B_{2R}(x_{0})}|^{p}\operatorname{d}\!x \\
   && +\,C(\epsilon_{4},\epsilon_{5},L,p,\nu)\fint_{B_{R_{2}}(x_{0})}\left(\mu^{p}+|{\bf F}-{\bf F}_{0}|^{p'}\right)\operatorname{d}\!x.
\end{eqnarray*}
Furthermore, choosing the positive constants $\epsilon_{4}$ and $\epsilon_{5}$ such that $\epsilon_{4}+\epsilon_{5}=\frac{1}{2}$ and combining with $\eta\equiv1$ in $B_{R_{1}}(x_{0})$ to derive
\begin{eqnarray*}
   \fint_{B_{R_{1}}(x_{0})}|D{\bf u}|^{p}\operatorname{d}\!x
   &\leq& \frac{1}{2}\fint_{B_{R_{2}}(x_{0})}|D{\bf u}|^{p}\operatorname{d}\!x+\frac{C(n,L,p,\nu)}{(R_{2}-R_{1})^{p}}\fint_{B_{2R}(x_{0})}|{\bf u}-\left(\bf u\right)_{B_{2R}(x_{0})}|^{p}\operatorname{d}\!x \\
   && +\,C(n,L,p,\nu)\fint_{B_{2R}(x_{0})}\left(\mu^{p}+|{\bf F}-{\bf F}_{0}|^{p'}\right)\operatorname{d}\!x
\end{eqnarray*}
for all $R<R_{1}<R_{2}<R$.

Finally, by virtue of the well-known iteration lemma, which can be found in \cite[Lemma 6.1]{Giu},
then we conclude that the Caccioppoli type inequality \eqref{caccioppoli} holds. This completes the proof of Lemma \ref{caccioppoli-u}\,.
\end{proof}

\section{Comparison estimates.}\label{section3}

This section is devoted to compare the weak solution pair to \eqref{model} to that of limiting problem \eqref{ComparisonSystem-2} for which we have known regularity results.
We start by establishing a comparison estimate regarding to
$D{\bf u}$ with $D{\bf v}$\,, as well as the associated
pressures $\pi$ with $\pi_{\bf v}$\,.
\begin{lemma}\label{approximation1}
Let $\left({\bf u},\pi\right)$ be a weak solution pair to \eqref{model} with ${\bf F}\in L_{\rm loc}^{p'}(\Omega\,,\mathbb{R}_{\rm sym}^{n\times n})$\,. Then there exists a weak solution pair $\left({\bf v},\pi_{\bf v}\right)$ to \eqref{ComparisonSystem} such that
\begin{eqnarray}\label{inequ-comparisonlemma1}
&& \fint_{B_{2R}(x_{0})}\left(\left|D{\bf u}-D{\bf v}\right|^{p}+\left|\pi-\pi_{\bf v}\right|^{p'}\right)\operatorname{d}\!x \\
&\leq& \varepsilon\chi_{\{p\neq2\}}\left(\fint_{B_{4R}(x_{0})}\left(\mu+|D{\bf u}|\right)\operatorname{d}\!x\right)^{p}\!+C\fint_{B_{4R}(x_{0})}\left|{\bf F}-{\bf F}_{0}\right|^{p'}\operatorname{d}\!x \nonumber
\end{eqnarray}
and
\begin{equation}\label{u-v}
   \fint_{B_{2R}(x_{0})}\left|{\bf u}-{\bf v}\right|^{p}\operatorname{d}\!x\leq \varepsilon R^{p}\chi_{\{1<p<2\}}\left(\fint_{B_{4R}(x_{0})}\left(\mu+|D{\bf u}|\right)\operatorname{d}\!x\right)^{p}+ CR^{p}\fint_{B_{4R}(x_{0})}\!\left|{\bf F}-{\bf F}_{0}\right|^{p'}\operatorname{d}\!x
\end{equation}
for any $0<\varepsilon<1$, any constant matrix ${\bf F}_{0}\in \mathbb{R}_{\rm sym}^{n\times n}$, almost all $x_{0}\in\Omega$ and every $B_{4R}(x_{0})\subset\Omega$\,. Here the positive constant $C=C(\varepsilon, n,\nu, L, p)$.
\end{lemma}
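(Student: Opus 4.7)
The plan is to test the difference of the weak formulations of \eqref{model} and \eqref{ComparisonSystem} against the admissible function $\mathbf{u}-\mathbf{v}\in W^{1,p}_{0,\operatorname{div}}(B_{2R}(x_0),\mathbb{R}^n)$, handle the pressure separately by duality against a Bogovski\u{\i}-type corrector built from Lemma \ref{existence-Johndomain}, and finally recover the $L^p$ estimate on $\mathbf{u}-\mathbf{v}$ through the Poincar\'e--Korn inequality. The existence of the comparison pair $(\mathbf{v},\pi_{\mathbf{v}})$ is itself standard via monotone operator theory together with Lemma \ref{existence-Johndomain} for the recovery of the pressure.

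Subtracting the two weak formulations and testing with $\mathbf{u}-\mathbf{v}$ produces the energy identity
\[
\fint_{B_{2R}}\langle \mathcal{A}(x,D\mathbf{u})-\mathcal{A}(x,D\mathbf{v}),D\mathbf{u}-D\mathbf{v}\rangle\operatorname{d}\!x = \fint_{B_{2R}}\langle \mathbf{F}-\mathbf{F}_0,D\mathbf{u}-D\mathbf{v}\rangle\operatorname{d}\!x,
\]
where $\mathbf{F}_0$ is inserted using $\int_{B_{2R}} D(\mathbf{u}-\mathbf{v})\operatorname{d}\!x=0$. For $p\geq 2$ the ellipticity in \eqref{structural-cons} bounds the left side below by $c|D\mathbf{u}-D\mathbf{v}|^p$, and a single application of Young's inequality yields $\fint|D\mathbf{u}-D\mathbf{v}|^p\leq C\fint|\mathbf{F}-\mathbf{F}_0|^{p'}$ with no small factor. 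For $1<p<2$ I would exploit the elementary decomposition
\[
|D\mathbf{u}-D\mathbf{v}|^p\leq (\mu^2+|D\mathbf{u}|^2+|D\mathbf{v}|^2)^{\frac{p(2-p)}{4}}\bigl[(\mu^2+|D\mathbf{u}|^2+|D\mathbf{v}|^2)^{\frac{p-2}{2}}|D\mathbf{u}-D\mathbf{v}|^2\bigr]^{p/2},
\]
apply H\"older with exponents $2/(2-p)$ and $2/p$ followed by Young's inequality to generate an $\varepsilon(\mu+|D\mathbf{u}|+|D\mathbf{v}|)^p$ term, and finally invoke Lemma \ref{Du-control-Dv} to replace $|D\mathbf{v}|$ by $\mu+|D\mathbf{u}|$ and Lemma \ref{reserve_Holder_Du} to upgrade the resulting $\fint_{B_{2R}}|D\mathbf{u}|^p$ into the desired form $(\fint_{B_{4R}}(\mu+|D\mathbf{u}|))^p+\fint_{B_{4R}}|\mathbf{F}-\mathbf{F}_0|^{p'}$.

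For the pressure I would argue by duality. Set $g:=|\pi-\pi_{\mathbf{v}}|^{p'-2}(\pi-\pi_{\mathbf{v}})-c$, with $c$ chosen so that $\int_{B_{2R}} g=0$, and use Lemma \ref{existence-Johndomain} to produce $\psi\in W^{1,p}_0(B_{2R},\mathbb{R}^n)$ with $\operatorname{div}\psi=g$ and $\|\nabla\psi\|_{L^p(B_{2R})}\leq C\|\pi-\pi_{\mathbf{v}}\|_{L^{p'}(B_{2R})}^{p'/p}$. Testing the difference of the pressure-carrying weak formulations against $\psi$ gives
\[
\int_{B_{2R}}|\pi-\pi_{\mathbf{v}}|^{p'}\operatorname{d}\!x=\int_{B_{2R}}\langle \mathcal{A}(x,D\mathbf{u})-\mathcal{A}(x,D\mathbf{v}),D\psi\rangle\operatorname{d}\!x-\int_{B_{2R}}\langle \mathbf{F}-\mathbf{F}_0,D\psi\rangle\operatorname{d}\!x,
\]
and H\"older yields $\fint|\pi-\pi_{\mathbf{v}}|^{p'}\leq C\fint|\mathcal{A}(x,D\mathbf{u})-\mathcal{A}(x,D\mathbf{v})|^{p'}+C\fint|\mathbf{F}-\mathbf{F}_0|^{p'}$. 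For $p=2$ the growth bound $|\mathcal{A}(x,D\mathbf{u})-\mathcal{A}(x,D\mathbf{v})|\leq L|D\mathbf{u}-D\mathbf{v}|$ closes the argument without an $\varepsilon$. For $p\neq 2$ the weighted growth bound forces a second H\"older--Young splitting with conjugate exponents $(p-1)/(p-2)$ and $p-1$ against the weight $(\mu+|D\mathbf{u}|+|D\mathbf{v}|)^{p-2}$; this is precisely where the $\varepsilon\chi_{\{p\neq 2\}}(\mu+|D\mathbf{u}|)^p$ contribution in \eqref{inequ-comparisonlemma1} is generated, again after invoking Lemma \ref{Du-control-Dv} and Lemma \ref{reserve_Holder_Du}.

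Finally, the $L^p$-estimate \eqref{u-v} follows because $\mathbf{u}-\mathbf{v}\in W_0^{1,p}(B_{2R},\mathbb{R}^n)$: Poincar\'e's inequality together with Korn's inequality \eqref{Korn1} gives $\fint_{B_{2R}}|\mathbf{u}-\mathbf{v}|^p\leq CR^p\fint_{B_{2R}}|D(\mathbf{u}-\mathbf{v})|^p$, and the gradient estimate from the first step supplies the right-hand side. The factor $\chi_{\{1<p<2\}}$ in \eqref{u-v} reflects the fact that only the subquadratic branch of the gradient estimate carries an $\varepsilon$-term. I expect the principal obstacle to be the pressure estimate for $p\neq 2$: the nonlinear growth of $\mathcal{A}(x,\cdot)$ forces a delicate balance of exponents in the Young--H\"older absorption so that the $(\mu+|D\mathbf{u}|)^p$-term picks up a genuine small factor $\varepsilon$ rather than an inabsorbable constant, which in turn requires applying Lemma \ref{reserve_Holder_Du} only \emph{after} the absorption step has been carried out in $L^p$-averages of $D\mathbf{u}$ on $B_{2R}$.
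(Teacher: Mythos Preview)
Your proposal is correct and follows essentially the same route as the paper: test the difference system with $\mathbf{u}-\mathbf{v}$, split the symmetric-gradient estimate into the cases $p\ge 2$ and $1<p<2$ (using Lemma~\ref{Du-control-Dv} and Lemma~\ref{reserve_Holder_Du} in the subquadratic branch), recover the pressure by duality through a Bogovski\u{\i} corrector from Lemma~\ref{existence-Johndomain}, and finish \eqref{u-v} via Poincar\'e--Korn. One small imprecision: the H\"older--Young splitting with exponents $(p-1)/(p-2)$ and $p-1$ you describe for the pressure is only the $p>2$ mechanism; when $1<p<2$ the weight $(\mu^2+|D\mathbf{u}|^2+|D\mathbf{v}|^2)^{(p-2)/2}$ is already dominated by $|D\mathbf{u}-D\mathbf{v}|^{p-2}$, so $|\mathcal{A}(x,D\mathbf{u})-\mathcal{A}(x,D\mathbf{v})|^{p'}\le C|D\mathbf{u}-D\mathbf{v}|^{p}$ directly and the $\varepsilon$-term is inherited from the gradient estimate rather than generated anew.
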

\begin{proof}
Let $\left({\bf u},\pi\right)$ and $\left({\bf v},\pi_{\bf v}\right)$ be the weak solution pairs to \eqref{model} and \eqref{ComparisonSystem} respectively. Then $$({\bf u}-{\bf v}, \pi-\pi_{\bf v})\in W_{0,\operatorname{div}}^{1,p}(B_{2R}(x_{0}), \mathbb{R}^{n})\times L^{p'}(B_{2R}(x_{0}))$$ is a weak solution pair to
\begin{equation}\label{lemu-v-equ1}
\left\{\begin{array}{r@{\ \ }c@{\ \ }ll}
\operatorname{div}\left(\mathcal{A}\left(x,D{\bf u}\right)-\mathcal{A}\left(x,D{\bf v}\right)\right)-\nabla\left(\pi-\pi_{\bf v}\right) & =& \operatorname{div}\left({\bf F}-{\bf F}_{0}\right) & \mbox{in}\ \ B_{2R}(x_{0})\,, \\[0.05cm]
\operatorname{div}\left({\bf u}-{\bf v}\right)&=& 0 & \mbox{in}\ \ B_{2R}(x_{0})\,, \\[0.05cm]
{\bf u}-{\bf v}&=& {\bf 0}& \mbox{on}\ \ \partial B_{2R}(x_{0})\,.
\end{array}\right.
\end{equation}
We choose $ {\bf u}-{\bf v}$ as
a divergence free test function for \eqref{lemu-v-equ1}\,. Then by virtue of \eqref{structural-cons} and Young's inequality, we obtain that
\begin{eqnarray}\label{lemu-v-inequ2}
 && \nu\,\fint_{B_{2R}(x_{0})}\left(\mu^{2}+|D{\bf u}|^{2}+|D{\bf v}|^{2}\right)^{\frac{p-2}{2}}\left|D{\bf u}-D{\bf  v}\right|^{2}\operatorname{d}\!x\nonumber\\
&\leq& \,\fint_{B_{2R}(x_{0})} \left\langle\,\mathcal{A}\left(x,D{\bf u}\right)-\mathcal{A}\left(x,D{\bf v}\right),D{\bf u}-D{\bf  v}\right\rangle\operatorname{d}\!x \nonumber\\
&=& \fint_{B_{2R}(x_{0})}\left\langle{\bf F}-{\bf F}_{0}\,,D{\bf u}-D{\bf v}\right\rangle\,\operatorname{d}\!x \nonumber\\
&\leq&\tau_{1}\fint_{B_{2R}(x_{0})}\left|D{\bf  u}-D{\bf v}\right|^{p}\operatorname{d}\!x+ C(\tau_{1},p)\,\fint_{B_{2R}(x_{0})}\left|{\bf F}-{\bf F}_{0}\right|^{p'}\operatorname{d}\!x.
\end{eqnarray}
In order to estimate the first term on the right side of \eqref{lemu-v-inequ2}, we shall consider the following two cases that $p\geq2$ and $1<p<2$. For the former one, we have
\begin{eqnarray}\label{uv-p>=2}
  \left|D{\bf  u}-D{\bf v}\right|^{p} &=& \left|D{\bf  u}-D{\bf v}\right|^{p-2}\left|D{\bf  u}-D{\bf v}\right|^{2} \nonumber \\
   &\leq& \left(\left|D{\bf  u}\right|+\left|D{\bf v}\right|\right)^{p-2}\left|D{\bf  u}-D{\bf v}\right|^{2} \nonumber\\
   &\leq& 2^{\frac{p-2}{2}}\left(\mu^{2}+\left|D{\bf  u}\right|^{2}+\left|D{\bf v}\right|^{2}\right)^{\frac{p-2}{2}}\left|D{\bf  u}-D{\bf v}\right|^{2}\,.
\end{eqnarray}
Inserting \eqref{uv-p>=2} into \eqref{lemu-v-inequ2} and
choosing $\tau_{1}=\frac{\nu}{2^{\frac{p}{2}}}$\,, we derive
\begin{eqnarray*}
 && \fint_{B_{2R}(x_{0})}\left(\mu^{2}+|D{\bf u}|^{2}+|D{\bf v}|^{2}\right)^{\frac{p-2}{2}}\left|D{\bf u}-D{\bf  v}\right|^{2}\operatorname{d}\!x\nonumber\\
 &\leq& C(\nu,p)\,\fint_{B_{2R}(x_{0})}\left|{\bf F}-{\bf F}_{0}\right|^{p'}\operatorname{d}\!x\,.
\end{eqnarray*}
We apply \eqref{uv-p>=2} again to deduce that
\begin{equation}\label{lemuv-p>=2}
   \fint_{B_{2R}(x_{0})}\left|D{\bf  u}-D{\bf v}\right|^{p}\operatorname{d}\!x\leq  C(\nu,p)\,\fint_{B_{2R}(x_{0})}\left|{\bf F}-{\bf F}_{0}\right|^{p'}\operatorname{d}\!x\,.
\end{equation}
While if $1<p<2$, then it follows from Young's inequality that
\begin{eqnarray}\label{uv-1<p<2}
  \left|D{\bf  u}-D{\bf v}\right|^{p} &=& \left[\left(\mu^{2}+|D{\bf u}|^{2}+|D{\bf v}|^{2}\right)^{\frac{p-2}{2}}\left|D{\bf  u}-D{\bf v}\right|^{2}\right]^{\frac{p}{2}}\left(\mu^{2}+|D{\bf u}|^{2}+|D{\bf v}|^{2}\right)^{\frac{p(2-p)}{4}} \nonumber \\
   &\leq& \frac{p}{2}\left(\frac{2\tau_{2}}{2-p}\right)^{-\frac{2-p}{p}}\left(\mu^{2}+\left|D{\bf  u}\right|^{2}+\left|D{\bf v}\right|^{2}\right)^{\frac{p-2}{2}}\left|D{\bf  u}-D{\bf v}\right|^{2}\nonumber\\
   &&+\,\tau_{2}\left(\mu^{2}+\left|D{\bf  u}\right|^{2}+\left|D{\bf v}\right|^{2}\right)^{\frac{p}{2}}\,.
\end{eqnarray}
Combining \eqref{uv-1<p<2} with \eqref{lemu-v-inequ2}, we obtain
\begin{eqnarray*}
&& \fint_{B_{2R}(x_{0})}\left|D{\bf u}-D{\bf  v}\right|^{p}\operatorname{d}\!x\\
  &\leq& \frac{p}{2}\left(\frac{2\tau_{2}}{2-p}\right)^{-\frac{2-p}{p}}\fint_{B_{2R}(x_{0})}\left(\mu^{2}+|D{\bf u}|^{2}+|D{\bf v}|^{2}\right)^{\frac{p-2}{2}}\left|D{\bf u}-D{\bf  v}\right|^{2}\operatorname{d}\!x\\
  &&+\,\tau_{2}\fint_{B_{2R}(x_{0})}\left(\mu^{2}+|D{\bf u}|^{2}+|D{\bf v}|^{2}\right)^{\frac{p}{2}}\operatorname{d}\!x \\
&\leq&\frac{p}{2\nu}\left(\frac{2\tau_{2}}{2-p}\right)^{-\frac{2-p}{p}}\tau_{1}\fint_{B_{2R}(x_{0})}\left|D{\bf  u}-D{\bf v}\right|^{p}\operatorname{d}\!x+\tau_{2}\fint_{B_{2R}(x_{0})}\left(\mu^{2}+|D{\bf u}|^{2}+|D{\bf v}|^{2}\right)^{\frac{p}{2}}\operatorname{d}\!x\\
&&+\,C(\tau_{1},\tau_{2},p,\nu)\,\fint_{B_{2R}(x_{0})}\left|{\bf F}-{\bf F}_{0}\right|^{p'}\operatorname{d}\!x\,.
\end{eqnarray*}
Then we select the positive constant $\tau_{1}=\frac{\nu}{p}\left(\frac{2\tau_{2}}{2-p}\right)^{\frac{2-p}{p}}$ and apply Lemma \ref{Du-control-Dv} to derive that
\begin{eqnarray}\label{lemuv-1<p<2}
 &&\fint_{B_{2R}(x_{0})}\left|D{\bf u}-D{\bf  v}\right|^{p}\operatorname{d}\!x\nonumber\\
 &\leq& 2\tau_{2}\fint_{B_{2R}(x_{0})}\left(\mu^{2}+|D{\bf u}|^{2}+|D{\bf v}|^{2}\right)^{\frac{p}{2}}\operatorname{d}\!x+C(\tau_{2},\nu,p)\,\fint_{B_{2R}(x_{0})}\left|{\bf F}-{\bf F}_{0}\right|^{p'}\operatorname{d}\!x \nonumber\\
 &\leq& C(p,\nu,L)\tau_{2}\fint_{B_{2R}(x_{0})}\left(\mu^{p}+|D{\bf u}|^{p}\right)\operatorname{d}\!x+C(\tau_{2},\nu,p)\,\fint_{B_{2R}(x_{0})}\left|{\bf F}-{\bf F}_{0}\right|^{p'}\operatorname{d}\!x\,.
\end{eqnarray}
Thus, combining \eqref{lemuv-p>=2} with \eqref{lemuv-1<p<2} and using Lemma \ref{reserve_Holder_Du}\,, we deduce that
\begin{equation}\label{lemu-v-inequ3}
  \fint_{B_{2R}(x_{0})}\left|D{\bf u}-D{\bf  v}\right|^{p}\operatorname{d}\!x \leq \varepsilon_{1}\chi_{\{1<p<2\}}\left(\fint_{B_{4R}(x_{0})}\left(\mu+|D{\bf u}|\right)\operatorname{d}\!x\right)^{p}+C\,\fint_{B_{4R}(x_{0})}\left|{\bf F}-{\bf F}_{0}\right|^{p'}\operatorname{d}\!x
\end{equation}
for any $0<\varepsilon_{1}<1$ and $C=C\left(\varepsilon_{1},n,\nu,p,L\right)$.

Next, in order to prove \eqref{u-v}, we apply Poincar\'{e}'s inequality, Korn's inequlity \eqref{Korn1} and \eqref{lemu-v-inequ3} to derive that
\begin{eqnarray*}
\fint_{B_{2R}(x_{0})}\left|{\bf u}-{\bf v}\right|^{p}\operatorname{d}\!x&\leq& C(n,p)R^{p}\fint_{B_{2R}(x_{0})}\!\left|\nabla{\bf u}-\nabla{\bf v}\right|^{p}\operatorname{d}\!x \\
&\leq& C(n,p)R^{p}\fint_{B_{2R}(x_{0})}\!\left|D{\bf u}-D{\bf v}\right|^{p}\operatorname{d}\!x \\
   &\leq& \varepsilon_{2}R^{p}\chi_{\{1<p<2\}}\left(\fint_{B_{4R}(x_{0})}\left(\mu+|D{\bf u}|\right)\operatorname{d}\!x\right)^{p}+C\,R^{p}\fint_{B_{4R}(x_{0})}\left|{\bf F}-{\bf F}_{0}\right|^{p'}\operatorname{d}\!x
\end{eqnarray*}
for any $0<\varepsilon_{2}<1$ and $C=C(\varepsilon_{2}, n,\nu,L,p)$\,, which implies that \eqref{u-v} is true.

In the sequel, we establish the comparison estimate between $\pi$ and $\pi_{\bf v}$.
Let $\varphi\in W_{0}^{1,p}\left(B_{2R}(x_{0}),\mathbb{R}^{n}\right)$ be a test function of \eqref{lemu-v-equ1}\,, then
\begin{equation}\label{lemu-v-equ4}
  \int_{B_{2R}(x_{0})}\left(\pi-\pi_{\bf v}\right)\operatorname{div}\varphi \operatorname{d}\!x= \int_{B_{2R}(x_{0})}\big\langle \mathcal{A}\left(x,D{\bf  u}\right)-\mathcal{A}\left(x,D{\bf v}\right)-({\bf F}-{\bf F}_{0}),D\varphi\big\rangle \operatorname{d}\!x\,.
\end{equation}
More precisely, we select the above ${\bf \varphi}$ be a solution to the following auxiliary problem
\begin{equation}\label{auxiliaryprob}
\left\{\begin{array}{r@{\ \ }c@{\ \ }ll}
\operatorname{div}{\bf \varphi} & =& \operatorname{sgn}\left(\pi-\pi_{\bf v} \right)\left|\pi-\pi_{\bf v} \right|^{\frac{1}{p-1}}
-\left(\operatorname{sgn}\left(\pi-\pi_{\bf v} \right)\left|\pi-\pi_{\bf v} \right|^{\frac{1}{p-1}}\right)_{B_{2R}(x_{0})} & \mbox{in}\ \ B_{2R}(x_{0})\,, \\[0.05cm]
{\bf \varphi} &=&  0 & \mbox{on}\ \ \partial B_{2R}(x_{0})\,.
\end{array}\right.
\end{equation}
If we denote
$$g:= \operatorname{sgn}\left(\pi-\pi_{\bf v} \right)\left|\pi-\pi_{\bf v} \right|^{\frac{1}{p-1}}
-\left(\operatorname{sgn}\left(\pi-\pi_{\bf v} \right)\left|\pi-\pi_{\bf v} \right|^{\frac{1}{p-1}}\right)_{B_{2R}(x_{0})},$$
then it is obvious that $g\in L^{p}(B_{2R}(x_{0}))$ and $\int_{B_{2R}(x_{0})}g(x)\operatorname{d}\!x=0$.
The existence of such a solution to auxiliary problem
\eqref{auxiliaryprob} is guaranteed by Lemma \ref{existence-Johndomain}
and hence
\begin{equation}\label{lemu-v-inequ5}
  \fint_{B_{2R}(x_{0})}\left|\nabla\varphi\right|^{p}\operatorname{d}\!x\leq C\fint_{B_{2R}(x_{0})}\left|\pi-\pi_{\bf v}\right|^{p'}\operatorname{d}\!x\,,
\end{equation}
where the positive constant $C$ depends only on $n$ and $p$. Moreover, by virtue of Lemma 2.8 in \cite{BC}, there exists a unique $\pi-\pi_{\bf v}\in L^{p'}(B_{2R}(x_{0}))$ to \eqref{lemu-v-equ1} such that $\int_{B_{2R}(x_{0})}\left(\pi-\pi_{\bf v}\right)\operatorname{d}\!x=0$.

By substituting such $\varphi$ into equality
\eqref{lemu-v-equ4} and combining Young's inequality with \eqref{structural-cons} and
\eqref{lemu-v-inequ5}\,, one computes that
\begin{eqnarray*}
&& \int_{B_{2R}(x_{0})}\left|\pi-\pi_{\bf v}\right|^{p'} \operatorname{d}\!x \nonumber\\
&=&\int_{B_{2R}(x_{0})}\left|\pi-\pi_{\bf v}\right|^{p'} \operatorname{d}\!x-\left(\operatorname{sgn}\left(\pi-\pi_{\bf v} \right)\left|\pi-\pi_{\bf v} \right|^{\frac{1}{p-1}}\right)_{B_{2R}(x_{0})}\int_{B_{2R}(x_{0})}\left(\pi-\pi_{\bf v}\right)\operatorname{d}\!x \\
&=&\int_{B_{2R}(x_{0})}\left(\pi-\pi_{\bf v}\right)\left[\operatorname{sgn}\left(\pi-\pi_{\bf v} \right)\left|\pi-\pi_{\bf v} \right|^{\frac{1}{p-1}}-\left(\operatorname{sgn}\left(\pi-\pi_{\bf v} \right)\left|\pi-\pi_{\bf v} \right|^{\frac{1}{p-1}}\right)_{B_{2R}(x_{0})}\right] \operatorname{d}\!x \nonumber\\
   &=& \int_{B_{2R}(x_{0})}\big\langle \mathcal{A}\left(x,D{\bf  u}\right)-\mathcal{A}\left(x,D{\bf v}\right)-({\bf F}-{\bf F}_{0}),D\varphi\big\rangle \operatorname{d}\!x \nonumber\\
  &\leq& C(\tau_{3},p)\int_{B_{2R}(x_{0})}\left(\left|\mathcal{A}\left(x,D{\bf  u}\right)-\mathcal{A}\left(x,D{\bf v}\right)\right|^{p'}+\left|{\bf F}-{\bf F}_{0}\right|^{p'}\right)\operatorname{d}\!x+\tau_{3} \int_{B_{2R}(x_{0})}\left|D\varphi\right|^{p}\operatorname{d}\!x \nonumber \\
  &\leq& C(\tau_{3},L,p)\left\{\int_{B_{2R}(x_{0})}\!\left[\left(\mu^{2}+\left|D{\bf  u}\right|^{2}+\left|D{\bf  v}\right|^{2}\right)^{\frac{p-2}{2}}|D{\bf u}-D{\bf v}|\right]^{p'}\!\operatorname{d}\!x+\int_{B_{2R}(x_{0})}\!\left|{\bf F}-{\bf F}_{0}\right|^{p'}\operatorname{d}\!x\right\}\\
  && +\,\tau_{3}\,C(p) \int_{B_{2R}(x_{0})}\left|\nabla\varphi\right|^{p}\operatorname{d}\!x \nonumber \\
  &\leq& C(\tau_{3},L,p)\left\{\int_{B_{2R}(x_{0})}\left[\left(\mu^{2}+\left|D{\bf  u}\right|^{2}+\left|D{\bf  v}\right|^{2}\right)^{\frac{p-2}{2}}|D{\bf u}-D{\bf v}|\right]^{p'}\operatorname{d}\!x+\int_{B_{2R}(x_{0})}\left|{\bf F}-{\bf F}_{0}\right|^{p'}\operatorname{d}\!x\right\}\\
  && +\,\tau_{3}\,C_{1}(n,p)\int_{B_{2R}(x_{0})}\left|\pi-\pi_{\bf v}\right|^{p'}\operatorname{d}\!x\,.
\end{eqnarray*}
By choosing the positive constant $\tau_{3}=\frac{1}{2C_{1}}$\,, one infers that
\begin{eqnarray}\label{lemu-v-inequ6}
 && \fint_{B_{2R}(x_{0})}\left|\pi-\pi_{\bf v}\right|^{p'} \operatorname{d}\!x \\
  &\leq&C\fint_{B_{2R}(x_{0})}\left[\left(\mu^{2}+\left|D{\bf  u}\right|^{2}+\left|D{\bf  v}\right|^{2}\right)^{\frac{p-2}{2}}|D{\bf u}-D{\bf v}|\right]^{p'}\operatorname{d}\!x+C\fint_{B_{2R}(x_{0})}\left|{\bf F}-{\bf F}_{0}\right|^{p'}\operatorname{d}\!x\,, \nonumber
\end{eqnarray}
where $C=C(n,p,L)$. The following estimate is also split into two cases according to the value of $p$.
If $1<p\leq2$, then we have
\begin{eqnarray}\label{lemu-v-inequ7}
 && \fint_{B_{2R}(x_{0})}\left|\pi-\pi_{\bf v}\right|^{p'} \operatorname{d}\!x \\
  &\leq&C\fint_{B_{2R}(x_{0})}|D{\bf u}-D{\bf v}|^{p}\operatorname{d}\!x+C\fint_{B_{2R}(x_{0})}\left|{\bf F}-{\bf F}_{0}\right|^{p'}\operatorname{d}\!x \nonumber\\
  &\leq& C(n,p,L)\varepsilon_{1}\chi_{\{1<p<2\}}\left(\fint_{B_{4R}(x_{0})}\left(\mu+|D{\bf u}|\right)\operatorname{d}\!x\right)^{p}+C(\varepsilon_{1},n,p,\nu,L)\,\fint_{B_{4R}(x_{0})}\left|{\bf F}-{\bf F}_{0}\right|^{p'}\operatorname{d}\!x\,, \nonumber
\end{eqnarray}
which is ensured by \eqref{lemu-v-inequ3}. While the situation is however different when $p>2$, we combine Young's inequality with Lemma \ref{Du-control-Dv} and \eqref{lemu-v-inequ3} to derive
\begin{eqnarray}\label{lemu-v-inequ8}
   && \fint_{B_{2R}(x_{0})}\left[\left(\mu^{2}+\left|D{\bf  u}\right|^{2}+\left|D{\bf  v}\right|^{2}\right)^{\frac{p-2}{2}}|D{\bf u}-D{\bf v}|\right]^{p'}\operatorname{d}\!x \nonumber\\
   &\leq& \tau_{4}\fint_{B_{2R}(x_{0})}\left(\mu^{p}+\left|D{\bf  u}\right|^{p}+\left|D{\bf  v}\right|^{p}\right)\operatorname{d}\!x+C(\tau_{4},p)\fint_{B_{2R}(x_{0})}|D{\bf u}-D{\bf v}|^{p}\operatorname{d}\!x \nonumber\\
   &\leq& \tau_{4}\,C(\nu,L,p)\fint_{B_{2R}(x_{0})}\left(\mu^{p}+\left|D{\bf  u}\right|^{p}\right)\operatorname{d}\!x+C(\tau_{4},p,\nu,L)\fint_{B_{2R}(x_{0})}\left|{\bf F}-{\bf F}_{0}\right|^{p'}\operatorname{d}\!x\,.
\end{eqnarray}
By inserting \eqref{lemu-v-inequ8} into \eqref{lemu-v-inequ6} and applying Lemma \ref{reserve_Holder_Du} again, we obtain
\begin{eqnarray}\label{lemu-v-inequ9}
&& \fint_{B_{2R}(x_{0})}\left|\pi-\pi_{\bf v}\right|^{p'} \operatorname{d}\!x\nonumber\\
  &\leq& \tau_{4}\,C(n,\nu,L,p)\left(\fint_{B_{4R}(x_{0})}\left(\mu+\left|D{\bf  u}\right|\right)\operatorname{d}\!x\right)^{p}+C(\tau_{4},n,p,\nu,L)\fint_{B_{4R}(x_{0})}\left|{\bf F}-{\bf F}_{0}\right|^{p'}\operatorname{d}\!x\,.
\end{eqnarray}
Thus, a combination of \eqref{lemu-v-inequ7} and \eqref{lemu-v-inequ9} yields that
\begin{equation}\label{lempi-pi_v}
  \fint_{B_{2R}(x_{0})}\left|\pi-\pi_{\bf v}\right|^{p'} \operatorname{d}\!x\leq\varepsilon_{3}\chi_{\{p\neq2\}}\left(\fint_{B_{4R}(x_{0})}\left(\mu+|D{\bf u}|\right)\operatorname{d}\!x\right)^{p}+C\,\fint_{B_{4R}(x_{0})}\left|{\bf F}-{\bf F}_{0}\right|^{p'}\operatorname{d}\!x
\end{equation}
for any $0<\varepsilon_{3}<1$ and $C=C(\varepsilon_{3},n,p,\nu,L)$.

Finally, the inequalities \eqref{lemu-v-inequ3} and \eqref{lempi-pi_v} reveal that the comparison estimate \eqref{inequ-comparisonlemma1} holds for $1<p<+\infty$\,. Thus the proof of Lemma \ref{approximation1} is completed.
\end{proof}

The second part of this section is to establish a comparison estimate between the symmetric gradient $D{\bf v}$ and the associated pressure $\pi_{\bf v}$ to \eqref{ComparisonSystem} with $D{\bf w}$ and
$\pi_{\bf w}$ to
\eqref{ComparisonSystem-2}\,.

\begin{lemma}\label{approximation2}
Let $\left( {\bf v},\pi_{\bf v}\right)$ be a weak solution pair to \eqref{ComparisonSystem}\,. Then there exist a weak solution pair $\left({\bf w},\pi_{\bf w}\right)$ to \eqref{ComparisonSystem-2} and a positive constant $C=C(n,p,\nu,L)$ such that
\begin{equation}\label{inequ-comparisonlemma2}
\fint_{B_{\frac{3R}{2}}(x_{0})}\left(\left|D{\bf v}-D{\bf w}\right|^{p}+\left|\pi_{\bf v}-\pi_{\bf  w}\right|^{p'}\right)\operatorname{d}\!x\leq C\left[\mathcal{A}\right]_{\operatorname{ BMO}}^{\hat{\sigma}}\left(\frac{3R}{2}\right)\left(\fint_{B_{2R}(x_{0})}\left(\mu+\left|D{\bf v}\right|\right)\operatorname{d}\!x\right)^{p}
\end{equation}
for almost all $x_{0}\in\Omega$ and every $B_{2R}(x_{0})\subset\Omega$, where
\begin{equation}\label{sigma}
\hat{\sigma}=
\left\{\begin{array}{c@{\ \ }ll}
& \displaystyle \frac{p}{2(p-1)}\left(1-\frac{1}{\theta}\right)\,, & \mbox{if}\ \ p\geq2\,, \\[0.1cm]
& \displaystyle \frac{p}{2}\left(1-\frac{1}{\hat{\theta}}\right)\,, & \mbox{if}\ \ 1<p<2\,,
\end{array}\right.
\end{equation}
for some $\theta=\theta(n,\nu,L,p)>p$ and $\hat{\theta}=\min\{\theta,\bar{q}\}$ with $\bar{q}$ defined in \eqref{q}\,.
\end{lemma}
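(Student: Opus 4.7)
The plan is to test the weak formulations of \eqref{ComparisonSystem} and \eqref{ComparisonSystem-2} against the divergence-free difference $\phi={\bf v}-{\bf w}\in W^{1,p}_{0,\operatorname{div}}(B_{\frac{3R}{2}}(x_0),\mathbb{R}^n)$ (which is admissible since ${\bf v}={\bf w}$ on $\partial B_{\frac{3R}{2}}(x_0)$). Subtracting and inserting the auxiliary term $\bar{\mathcal{A}}(D{\bf v})$, one obtains the key identity
\begin{equation*}
\fint_{B_{\frac{3R}{2}}(x_0)}\big\langle\bar{\mathcal{A}}(D{\bf v})-\bar{\mathcal{A}}(D{\bf w}),D{\bf v}-D{\bf w}\big\rangle\operatorname{d}\!x=\fint_{B_{\frac{3R}{2}}(x_0)}\big\langle\bar{\mathcal{A}}(D{\bf v})-\mathcal{A}(x,D{\bf v}),D{\bf v}-D{\bf w}\big\rangle\operatorname{d}\!x.
\end{equation*}
The left-hand side is bounded below by the monotonicity in \eqref{structural-cons}, which transfers to the averaged coefficient $\bar{\mathcal{A}}$ by averaging the pointwise inequality in $x$. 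The right-hand side is bounded pointwise by $\beta(\mathcal{A},B_{\frac{3R}{2}}(x_0))\,(\mu^2+|D{\bf v}|^2)^{\frac{p-2}{2}}|D{\bf v}|\,|D{\bf v}-D{\bf w}|$ directly from the definition of $\beta$.

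Then I would split the estimate according to the two regimes $p\geq2$ and $1<p<2$, in perfect analogy with the scheme of Lemma \ref{approximation1}. In the superquadratic case, the bound $(\mu^2+|D{\bf v}|^2+|D{\bf w}|^2)^{\frac{p-2}{2}}|D{\bf v}-D{\bf w}|^2\geq c|D{\bf v}-D{\bf w}|^p$ and Young's inequality (with a small parameter absorbed back into the left-hand side) reduce matters to
\begin{equation*}
\fint_{B_{\frac{3R}{2}}(x_0)}|D{\bf v}-D{\bf w}|^p\operatorname{d}\!x\leq C\fint_{B_{\frac{3R}{2}}(x_0)}\beta^{a}(\mu+|D{\bf v}|)^p\operatorname{d}\!x
\end{equation*}
for a suitable $a>1$. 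In the subquadratic case an extra Young splitting of the form \eqref{uv-1<p<2} is required to convert the negative exponent $(p-2)/2$ into a positive $p$-power, and the exponent $\bar q$ from Lemma \ref{Dv-control-Dw} enters through the resulting $L^{p}$-norm of $D{\bf v}+D{\bf w}$.

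Next I would apply Hölder's inequality with exponents calibrated to the higher integrability exponent $\theta$ (produced by a Gehring-type self-improvement for the homogeneous system \eqref{ComparisonSystem}),
\begin{equation*}
\fint \beta^{a}(\mu+|D{\bf v}|)^p\operatorname{d}\!x\leq\Bigl(\fint \beta^{\frac{a\theta}{\theta-p}}\operatorname{d}\!x\Bigr)^{\!\frac{\theta-p}{\theta}}\Bigl(\fint (\mu+|D{\bf v}|)^\theta\operatorname{d}\!x\Bigr)^{\!\frac{p}{\theta}}.
\end{equation*}
Combining the uniform bound $\beta\leq C(L,p)$ (which follows from the growth in \eqref{structural-cons}) with the control $\fint\beta\leq[\mathcal{A}]_{\operatorname{BMO}}(\tfrac{3R}{2})$ gives the self-improvement $\fint\beta^s\leq C_s[\mathcal{A}]_{\operatorname{BMO}}$ for every $s\geq1$; this is what produces the power $[\mathcal{A}]_{\operatorname{BMO}}^{\hat\sigma}(\tfrac{3R}{2})$ with $\hat\sigma$ as in \eqref{sigma}. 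The second factor is then controlled via the reverse Hölder inequality for $D{\bf v}$ (Lemma \ref{reserve_Holder_Du} applied to \eqref{ComparisonSystem}) on the enlarged ball $B_{2R}(x_0)$, so that $(\fint_{B_{\frac{3R}{2}}(x_0)}(\mu+|D{\bf v}|)^\theta)^{\frac{p}{\theta}}\leq C(\fint_{B_{2R}(x_0)}(\mu+|D{\bf v}|))^p$.

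For the pressure comparison I would mimic the Bogovski\u{\i} duality used in the second half of Lemma \ref{approximation1}: $\pi_{\bf v}-\pi_{\bf w}$ has zero average and satisfies a Stokes-type weak identity with source $\operatorname{div}(\mathcal{A}(x,D{\bf v})-\bar{\mathcal{A}}(D{\bf w}))$; choosing a test $\varphi$ via Lemma \ref{existence-Johndomain} so that $\operatorname{div}\varphi=\operatorname{sgn}(\pi_{\bf v}-\pi_{\bf w})|\pi_{\bf v}-\pi_{\bf w}|^{\frac{1}{p-1}}$ (minus its mean) converts the $L^{p'}$ bound on $\pi_{\bf v}-\pi_{\bf w}$ into an integral of $|\mathcal{A}(x,D{\bf v})-\bar{\mathcal{A}}(D{\bf w})|^{p'}$, which I would split as $|\mathcal{A}(x,D{\bf v})-\bar{\mathcal{A}}(D{\bf v})|^{p'}+|\bar{\mathcal{A}}(D{\bf v})-\bar{\mathcal{A}}(D{\bf w})|^{p'}$ and estimate via $\beta$ plus the $L^p$ bound on $D{\bf v}-D{\bf w}$ already obtained. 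The main obstacle is the bookkeeping of the exponent $\hat\sigma$ in \eqref{sigma}: one must track how the self-improving integrability of $\beta$ (a function bounded by a BMO number), the higher integrability exponent $\theta$ (or $\bar q$ in the subquadratic case) for $D{\bf v}$, and the monotonicity profile of $\mathcal{A}$ interact, since a loose Hölder split would yield a weaker power of $[\mathcal{A}]_{\operatorname{BMO}}$ than $\hat\sigma$.
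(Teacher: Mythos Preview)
Your plan is essentially the paper's proof, and the superquadratic case and the pressure duality via Bogovski\u{\i} are handled exactly as you describe. Two points in the subquadratic case need correction.

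First, a Young splitting in the style of \eqref{uv-1<p<2} does \emph{not} work here. Applying it would give
\[
\fint_{B_{\frac{3R}{2}}}|D{\bf v}-D{\bf w}|^{p}\operatorname{d}\!x\leq C_{\tau}\fint_{B_{\frac{3R}{2}}}\beta^{2}\bigl(\mu^{2}+|D{\bf v}|^{2}+|D{\bf w}|^{2}\bigr)^{\frac{p}{2}}\operatorname{d}\!x+\tau\fint_{B_{\frac{3R}{2}}}\bigl(\mu^{2}+|D{\bf v}|^{2}+|D{\bf w}|^{2}\bigr)^{\frac{p}{2}}\operatorname{d}\!x,
\]
and the second term carries no factor of $[\mathcal{A}]_{\operatorname{BMO}}$, so it cannot be placed on the right-hand side of \eqref{inequ-comparisonlemma2}. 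This is precisely the difference from Lemma~\ref{approximation1}, where such a free $\varepsilon$-term is allowed. The paper instead uses the pointwise identity
\[
|D{\bf v}-D{\bf w}|^{p}=\Bigl[\bigl(\mu^{2}+|D{\bf v}|^{2}+|D{\bf w}|^{2}\bigr)^{\frac{p-2}{2}}|D{\bf v}-D{\bf w}|^{2}\Bigr]^{\frac{p}{2}}\bigl(\mu^{2}+|D{\bf v}|^{2}+|D{\bf w}|^{2}\bigr)^{\frac{p(2-p)}{4}}
\]
followed by H\"{o}lder with exponents $2/p$ and $2/(2-p)$; after inserting the monotonicity bound, every factor then inherits a power of $[\mathcal{A}]_{\operatorname{BMO}}$.

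Second, the exponent $\bar{q}$ does not come from Lemma~\ref{Dv-control-Dw} (which is only an $L^{p}$ energy bound). It is the higher integrability exponent for $D{\bf w}$ available because ${\bf w}$ solves the \emph{constant-coefficient} system \eqref{ComparisonSystem-2}; see \eqref{q}. For $1<p<2$ the right-hand side of the monotonicity estimate contains $|D{\bf w}|$ (cf.\ \eqref{lemv-w-inequ2-2}), so the subsequent H\"{o}lder split requires higher integrability of both $D{\bf v}$ (exponent $\theta$) and $D{\bf w}$ (exponent $\bar{q}$); this is why $\hat\theta=\min\{\theta,\bar{q}\}$ appears in \eqref{sigma}.
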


\begin{proof}
A direct computation reveals that $\left( {\bf v}-{\bf w},\pi_{\bf v}-\pi_{\bf w}\right)\in W_{0,\operatorname{div}}^{1,p}(B_{\frac{3R}{2}}(x_{0}), \mathbb{R}^{n})\times L^{p'}(B_{\frac{3R}{2}}(x_{0}))$ is a weak solution pair to
\begin{equation}\label{lemv-w-equ1}
\left\{\begin{array}{r@{\ }c@{\ }ll}
\operatorname{div}\left(\bar{\mathcal{A}}\left(D{\bf v}\right)-\bar{\mathcal{A}}\left(D{\bf w}\right)\right)-\nabla\left(\pi_{\bf v}-\pi_{\bf w}\right)& =&\operatorname{div}\left(\bar{\mathcal{A}}\left(D{\bf v}\right)-\mathcal{A}\left(x,D{\bf v}\right)\right) & \mbox{in}\ \ B_{\frac{3R}{2}}\,, \\[0.05cm]
\operatorname{div}\left({\bf v}- {\bf w}\right)\ &=&\ 0 & \mbox{in}\ \ B_{\frac{3R}{2}}\,, \\[0.05cm]
{\bf v}- {\bf w}\ &=&\ {\bf 0} & \mbox{on}\ \
\partial B_{\frac{3R}{2}}\,,
\end{array}\right.
\end{equation}
where we abbreviate the ball $B_{\frac{3R}{2}}(x_{0})$ to $B_{\frac{3R}{2}}$.
Selecting ${\bf v}- {\bf w}$ as a divergence free test function of \eqref{lemv-w-equ1}, we have
 \begin{eqnarray}\label{v-w-weaksol}
  && \fint_{B_{\frac{3R}{2}}} \left\langle\,\left(\mathcal{A}\left(\cdot,D{\bf v}\right)\right)_{B_{\frac{3R}{2}}}-\left(\mathcal{A}\left(\cdot,D{\bf w}\right)\right)_{B_{\frac{3R}{2}}},D{\bf v}-D{\bf  w}\right\rangle\operatorname{d}\!x\nonumber\\
&=& \fint_{B_{\frac{3R}{2}}}\left\langle\left(\mathcal{A}\left(\cdot,D{\bf v}\right)\right)_{B_{\frac{3R}{2}}}-\mathcal{A}\left(x,D{\bf v}\right)\,,D{\bf v}-D{\bf w}\right\rangle\,\operatorname{d}\!x\,.
 \end{eqnarray}
 Involving the conditions \eqref{structural-cons}, we start by estimating the term on the left side of \eqref{v-w-weaksol} as follows
\begin{eqnarray}\label{v-wleft}
   && \fint_{B_{\frac{3R}{2}}} \left\langle\,\left(\mathcal{A}\left(\cdot,D{\bf v}\right)\right)_{B_{\frac{3R}{2}}}-\left(\mathcal{A}\left(\cdot,D{\bf w}\right)\right)_{B_{\frac{3R}{2}}},D{\bf v}-D{\bf  w}\right\rangle\operatorname{d}\!x \nonumber\\
   &=& \fint_{B_{\frac{3R}{2}}} \fint_{B_{\frac{3R}{2}}}\left\langle\,\mathcal{A}\left(y,D{\bf v}\right)-\mathcal{A}\left(y,D{\bf w}\right),D{\bf v}-D{\bf  w}\right\rangle\operatorname{d}\!y\operatorname{d}\!x \nonumber\\
   &\geq& \nu\fint_{B_{\frac{3R}{2}}}\left(\mu^{2}+|D{\bf v}|^{2}+|D{\bf w}|^{2}\right)^{\frac{p-2}{2}}|D{\bf v}-D{\bf  w}|^{2}\operatorname{d}\!x\,.
\end{eqnarray}
The proof will be divided into two cases that $p\geq2$ and $1<p<2$. For the former situation,
we estimate the term on the right side of \eqref{v-w-weaksol}\,. By Young's inequality, one derives
\begin{eqnarray}\label{v-wright}
   && \fint_{B_{\frac{3R}{2}}}\left\langle\left(\mathcal{A}\left(\cdot,D{\bf v}\right)\right)_{B_{\frac{3R}{2}}}-\mathcal{A}\left(x,D{\bf v}\right)\,,D{\bf v}-D{\bf w}\right\rangle\,\operatorname{d}\!x \nonumber\\
   &\leq& \fint_{B_{\frac{3R}{2}}}\beta\left(\mathcal{A},B_{\frac{3R}{2}}\right)\left(\mu^{2}+|D{\bf v}|^{2}\right)^{\frac{p-1}{2}}\left|D{\bf v}-D{\bf w}\right|\,\operatorname{d}\!x \nonumber\\
   &\leq& C(\hat{\tau}_{1}) \fint_{B_{\frac{3R}{2}}}\beta^{2}\left(\mathcal{A},B_{\frac{3R}{2}}\right)\left(\mu^{2}+|D{\bf v}|^{2}\right)^{\frac{p}{2}}\,\operatorname{d}\!x\nonumber\\
  && +\,\hat{\tau}_{1}\fint_{B_{\frac{3R}{2}}}\left(\mu^{2}+|D{\bf v}|^{2}+|D{\bf w}|^{2}\right)^{\frac{p-2}{2}}\left|D{\bf v}-D{\bf w}\right|^{2}\,\operatorname{d}\!x\,.
\end{eqnarray}
Then combining \eqref{v-wleft} with \eqref{v-wright}, and choosing $\hat{\tau}_{1}=\frac{\nu}{2}$, we derive
\begin{eqnarray}\label{lemv-w-inequ2}
   && \fint_{B_{\frac{3R}{2}}}\left(\mu^{2}+|D{\bf v}|^{2}+|D{\bf w}|^{2}\right)^{\frac{p-2}{2}}|D{\bf v}-D{\bf  w}|^{2}\operatorname{d}\!x\nonumber \\
   &\leq&  C(\nu) \fint_{B_{\frac{3R}{2}}}\beta^{2}\left(\mathcal{A},B_{\frac{3R}{2}}\right)\left(\mu^{2}+|D{\bf v}|^{2}\right)^{\frac{p}{2}}\,\operatorname{d}\!x\,.
\end{eqnarray}
The technique \cite[Theorem 3.4]{DiKaSc} and the known Gehring's Lemma indicate that
the following higher integrability result of $D{\bf v}$
holds for some $\theta=\theta(n,\nu,L,p)>p$ and any $1<p<+\infty$
\begin{equation}\label{v-reverseholder}
  \fint_{B_{\frac{3R}{2}}(x_{0})}|D{\bf v}|^{\theta}\operatorname{d}\!x\leq C\left(\fint_{B_{2R}(x_{0})}\left(\mu^{p}+|D{\bf v}|^{p}\right)\operatorname{d}\!x\right)^{\frac{\theta}{p}}.
\end{equation}
Then by virtue of Lemma \ref{reverse-holder}\,, we have
\begin{equation}\label{reverse-Dv}
  \fint_{B_{\frac{3R}{2}}(x_{0})}|D{\bf v}|^{\theta}\operatorname{d}\!x\leq C\left(\fint_{B_{2R}(x_{0})}\left(\mu+|D{\bf v}|\right)\operatorname{d}\!x\right)^{\theta},
\end{equation}
where $C=C(n,\nu,L,p)$.

Since $p\geq2$, then we apply H\"{o}lder's inequality, the boundedness of $\beta\left(\mathcal{A},B_{\frac{3R}{2}}\right)$ and \eqref{reverse-Dv} to the inequality \eqref{lemv-w-inequ2}, and derive that
\begin{eqnarray}\label{v-w-p>=2}
&&\fint_{B_{\frac{3R}{2}}}\left|D{\bf v}-D{\bf w}\right|^{p}\operatorname{d}\!x \nonumber\\
   &\leq& \fint_{B_{\frac{3R}{2}}}\left(\mu^{2}+|D{\bf v}|^{2}+|D{\bf w}|^{2}\right)^{\frac{p-2}{2}}|D{\bf v}-D{\bf  w}|^{2}\operatorname{d}\!x \nonumber \\
   &\leq&  C(\nu) \fint_{B_{\frac{3R}{2}}}\beta^{2}\left(\mathcal{A},B_{\frac{3R}{2}}\right)\left(\mu^{2}+|D{\bf v}|^{2}\right)^{\frac{p}{2}}\,\operatorname{d}\!x \nonumber\\
    &\leq&  C(\nu) \left(\fint_{B_{\frac{3R}{2}}}\beta^{\frac{2\theta}{\theta-1}}\left(\mathcal{A},B_{\frac{3R}{2}}\right)
    \operatorname{d}\!x\right)^{\frac{\theta-1}{\theta}}
    \left(\fint_{B_{\frac{3R}{2}}}\left(\mu^{2}+|D{\bf v}|^{2}\right)^{\frac{p\theta}{2}}\,\operatorname{d}\!x\right)^{\frac{1}{\theta}} \nonumber\\
    &\leq&  C(n,\nu,L,p) \left(\fint_{B_{\frac{3R}{2}}}\beta\left(\mathcal{A},B_{\frac{3R}{2}}\right)
    \operatorname{d}\!x\right)^{\frac{\theta-1}{\theta}}
    \left(\fint_{B_{\frac{3R}{2}}}\left(\mu^{2}+|D{\bf v}|^{2}\right)^{\frac{p\theta}{2}}\,\operatorname{d}\!x\right)^{\frac{1}{\theta}} \nonumber\\
    &\leq&  C(n,\nu,L,p) \left[\mathcal{A}\right]_{\operatorname {BMO}}^{1-\frac{1}{\theta}}\left(\frac{3R}{2}\right)
    \left(\fint_{B_{2R}}\left(\mu+|D{\bf v}|\right)\operatorname{d}\!x\right)^{p}.
\end{eqnarray}

While for the case $1<p<2$, the inequality \eqref{lemv-w-inequ2} is replaced by
\begin{eqnarray}\label{lemv-w-inequ2-2}
   && \fint_{B_{\frac{3R}{2}}}\left(\mu^{2}+|D{\bf v}|^{2}+|D{\bf w}|^{2}\right)^{\frac{p-2}{2}}|D{\bf v}-D{\bf  w}|^{2}\operatorname{d}\!x\nonumber \\
   &\leq&  C(\nu) \fint_{B_{\frac{3R}{2}}}\beta^{2}\left(\mathcal{A},B_{\frac{3R}{2}}\right)\left(\mu^{2}+|D{\bf v}|^{2}+|D{\bf w}|^{2}\right)^{\frac{p}{2}}\,\operatorname{d}\!x\,.
\end{eqnarray}
Then we shall use the following higher integrability result of $D{\bf w}$ which is introduced in \cite[Lemma 4.3]{BC}
\begin{equation*}\label{w-reverseholder}
  \fint_{B_{R}(x_{0})}|D{\bf w}|^{q}\operatorname{d}\!x\leq C\left(\fint_{B_{\frac{3R}{2}}(x_{0})}\left(\mu^{p}+|D{\bf w}|^{p}\right)\operatorname{d}\!x\right)^{\frac{q}{p}}
\end{equation*}
for all $q\in[p,\bar{q}]$ and $p\in(1,+\infty)$, where
\begin{equation}\label{q}
\bar{q}=
\left\{\begin{array}{c@{\,\ }ll}
& {\rm any \ number\ in\ }(p,+\infty)& \mbox{if}\ \ n=2\,, \\[0.1cm]
& \displaystyle\frac{np}{n-2} & \mbox{if}\ \ n\geq3\,.
\end{array}\right.
\end{equation}
It follows from Lemma \ref{reverse-holder} that
\begin{equation}\label{reverse-Dw}
  \fint_{B_{R}(x_{0})}|D{\bf w}|^{q}\operatorname{d}\!x\leq C\left(\fint_{B_{\frac{3R}{2}}(x_{0})}\left(\mu+|D{\bf w}|\right)\operatorname{d}\!x\right)^{q},
\end{equation}
where $C=C(n,\nu,L,p)$.

Since $1<p<2$, we apply H\"{o}lder's inequality, \eqref{reverse-Dw} as well as Lemma \ref{Dv-control-Dw} to the inequality \eqref{lemv-w-inequ2-2}\,, and then combine with the same argument as \eqref{v-w-p>=2} to deduce that
\begin{eqnarray}\label{v-w-1<p<2}
 && \fint_{B_{\frac{3R}{2}}}\left|D{\bf v}-D{\bf w}\right|^{p}\operatorname{d}\!x \nonumber \\
   &\leq& \left(\fint_{B_{\frac{3R}{2}}}\left(\mu^{2}+|D{\bf v}|^{2}+|D{\bf w}|^{2}\right)^{\frac{p-2}{2}}|D{\bf v}-D{\bf  w}|^{2}\operatorname{d}\!x\right)^{\!\frac{p}{2}}\!\left(\fint_{B_{\frac{3R}{2}}}\left(\mu^{2}+|D{\bf v}|^{2}+|D{\bf w}|^{2}\right)^{\frac{p}{2}}\operatorname{d}\!x\right)^{\frac{2-p}{2}} \nonumber \\
    &\leq&  C(n,\nu,L,p) \left[\mathcal{A}\right]_{\operatorname {BMO}}^{\frac{p}{2}\left(1-\frac{1}{\hat{\theta}}\right)}\left(\frac{3R}{2}\right)
    \left(\fint_{B_{2R}}\left(\mu+|D{\bf v}|\right)\operatorname{d}\!x\right)^{p},
\end{eqnarray}
where $\hat{\theta}=\min\{\theta,\bar{q}\}$.

Hence, a combination of \eqref{v-w-p>=2} and \eqref{v-w-1<p<2} concludes that
\begin{eqnarray}\label{lemv-w-inequ3}
\fint_{B_{\frac{3R}{2}}(x_{0})}\left|D{\bf v}-D{\bf w}\right|^{p}\operatorname{d}\!x
    \leq C\left[\mathcal{A}\right]_{\operatorname {BMO}}^{\sigma}\left(\frac{3R}{2}\right)
    \left(\fint_{B_{2R}(x_{0})}\left(\mu+|D{\bf v}|\right)\operatorname{d}\!x\right)^{p},
\end{eqnarray}
where $C=C(n,\nu,L,p)$ and
\begin{equation*}
\sigma=
\left\{\begin{array}{c@{\,\ }ll}
&\displaystyle 1-\frac{1}{\theta} & \mbox{if}\ \ p\geq2\,, \\[0.1cm]
&\displaystyle \frac{p}{2}\left(1-\frac{1}{\hat{\theta}}\right) & \mbox{if}\ \ 1<p<2\,.
\end{array}\right.
\end{equation*}

In the sequel, it suffices to estimate $\fint_{B_{\frac{3R}{2}}}\left|\pi_{\bf v}-\pi_{\bf w}\right|^{2}\operatorname{d}\!x$. Choosing $\phi\in W_{0}^{1,p}\left(B_{\frac{3R}{2}},\mathbb{R}^{n}\right)$ as a test function of \eqref{lemv-w-equ1}, that is to say
\begin{equation}\label{lemv-w-equ4}
  \fint_{B_{\frac{3R}{2}}}\left(\pi_{\bf v}-\pi_{\bf w}\right)\operatorname{div}\phi \operatorname{d}\!x= \fint_{B_{\frac{3R}{2}}}\big\langle\mathcal{A}\left(x,D{\bf v}\right)-\left(\mathcal{A}\left(\cdot,D{\bf w}\right)\right)_{B_{\frac{3R}{2}}},D\phi\big\rangle \operatorname{d}\!x\,.
\end{equation}
More precisely, selecting the above $\phi$ be a solution to the auxiliary problem
\begin{equation}\label{auxiliaryprob2}
\left\{\begin{array}{r@{\ \ }c@{\ \ }ll}
\operatorname{div}{\bf \phi} & =&\operatorname{sgn}\left(\pi_{\bf v}-\pi_{\bf w} \right)\left|\pi_{v}-\pi_{\bf w} \right|^{\frac{1}{p-1}}
-\left(\operatorname{sgn}\left(\pi_{\bf v}-\pi_{\bf w} \right)\left|\pi_{\bf v}-\pi_{\bf w} \right|^{\frac{1}{p-1}}\right)_{B_{\frac{3R}{2}}} & \mbox{in}\ \ B_{\frac{3R}{2}}\,, \\[0.05cm]
{\bf \phi} &=&  0 & \mbox{on}\ \ \partial B_{\frac{3R}{2}}\,.
\end{array}\right.
\end{equation}
Let
$$h:= \operatorname{sgn}\left(\pi_{\bf v}-\pi_{\bf w} \right)\left|\pi_{v}-\pi_{\bf w} \right|^{\frac{1}{p-1}}
-\left(\operatorname{sgn}\left(\pi_{\bf v}-\pi_{\bf w} \right)\left|\pi_{\bf v}-\pi_{\bf w} \right|^{\frac{1}{p-1}}\right)_{B_{\frac{3R}{2}}}\,.$$
Then it is not difficult to verify that $h\in L^{p}(B_{\frac{3R}{2}})$ and $\int_{B_{\frac{3R}{2}}(x_{0})}h(x)\operatorname{d}\!x=0$.
Hence, the existence of such a solution to auxiliary problem
\eqref{auxiliaryprob2} is ensured by Lemma \ref{existence-Johndomain}
and so
\begin{equation}\label{lemv-w-inequ5}
  \fint_{B_{\frac{3R}{2}}}\left|\nabla\phi\right|^{p}\operatorname{d}\!x\leq C\fint_{B_{\frac{3R}{2}}}\left|\pi_{\bf v}-\pi_{\bf w}\right|^{p'}\operatorname{d}\!x\,,
\end{equation}
where the positive constant $C$ depends only on $n$ and $p$.
Moreover, \cite[Lemma 2.8]{BC} infers that there exists a unique $\pi_{\bf v}-\pi_{\bf w}\in L^{p'}(B_{\frac{3R}{2}}(x_{0}))$ to the problem \eqref{lemv-w-equ1} such that
$\int_{B_{\frac{3R}{2}}}\left(\pi_{\bf v}-\pi_{\bf w}\right)\operatorname{d}\!x=0$.

By substituting such $\phi$ into equality
\eqref{lemv-w-equ4} and combining Young's inequality with
\eqref{lemv-w-inequ3} and \eqref{lemv-w-inequ5}, we obtain
\begin{eqnarray*}
   && \fint_{B_{\frac{3R}{2}}}\left|\pi_{\bf v}-\pi_{\bf w}\right|^{p'} \operatorname{d}\!x\nonumber\\
   &=&\fint_{B_{\frac{3R}{2}}}\left(\pi_{\bf v}-\pi_{\bf w}\right)\left[\operatorname{sgn}\left(\pi_{\bf v}-\pi_{\bf w} \right)\left|\pi_{\bf v}-\pi_{\bf w} \right|^{\frac{1}{p-1}}-\left(\operatorname{sgn}\left(\pi_{\bf v}-\pi_{\bf w} \right)\left|\pi_{\bf v}-\pi_{\bf w} \right|^{\frac{1}{p-1}}\right)_{B_{\frac{3R}{2}}}\right] \operatorname{d}\!x \nonumber\\
   &=& \fint_{B_{\frac{3R}{2}}}\big\langle\mathcal{A}\left(x,D{\bf v}\right)-\left(\mathcal{A}\left(\cdot,D{\bf w}\right)\right)_{B_{\frac{3R}{2}}},D\phi\big\rangle \operatorname{d}\!x \nonumber\\
  &\leq& C(\hat{\tau}_{2},p)\fint_{B_{\frac{3R}{2}}}\left|\mathcal{A}\left(x,D{\bf v}\right)-\left(\mathcal{A}\left(\cdot,D{\bf w}\right)\right)_{B_{\frac{3R}{2}}}\right|^{p'}\operatorname{d}\!x+\hat{\tau}_{2} \fint_{B_{\frac{3R}{2}}}\left|D\phi\right|^{p}\operatorname{d}\!x \\
  &\leq&C(n,p)\,\hat{\tau}_{2} \fint_{B_{\frac{3R}{2}}}\left|\bf\nabla\phi\right|^{p}\operatorname{d}\!x+ C(\hat{\tau}_{2},n,p,\nu,L)\left[\mathcal{A}\right]_{\operatorname {BMO}}^{\sigma}\left(\frac{3R}{2}\right)
    \left(\fint_{B_{2R}}\left(\mu+|D{\bf v}|\right)\operatorname{d}\!x\right)^{p}\\
    &&+\,C(\hat{\tau}_{2},p,L)\fint_{B_{\frac{3R}{2}}}\left[\left(\mu^{2}+\left|D{\bf v}\right|^{2}+\left|D{\bf  w}\right|^{2}\right)^{\frac{p-2}{2}}|D{\bf v}-D{\bf w}|\right]^{p'}\operatorname{d}\!x \\
  &\leq&C_{2}(n,p)\,\hat{\tau}_{2} \fint_{B_{\frac{3R}{2}}}\left|\pi_{\bf v}-\pi_{\bf w}\right|^{p'}\operatorname{d}\!x+ C(\hat{\tau}_{2},n,p,\nu,L)\left[\mathcal{A}\right]_{\operatorname {BMO}}^{\sigma}\left(\frac{3R}{2}\right)
    \left(\fint_{B_{2R}}\left(\mu+|D{\bf v}|\right)\operatorname{d}\!x\right)^{p}\\
    &&+\,C(\hat{\tau}_{2},p,L)\fint_{B_{\frac{3R}{2}}}\left[\left(\mu^{2}+\left|D{\bf v}\right|^{2}+\left|D{\bf  w}\right|^{2}\right)^{\frac{p-2}{2}}|D{\bf v}-D{\bf w}|\right]^{p'}\operatorname{d}\!x\,.
\end{eqnarray*}
Choosing the positive constant $\hat{\tau}_{2}$ sufficiently small
such that $C_{2}\hat{\tau}_{2}=\frac{1}{2}$\,, we derive that
\begin{eqnarray}\label{lemv-w-inequ6}
 &&\fint_{B_{\frac{3R}{2}}}\left|\pi_{\bf v}-\pi_{\bf w}\right|^{p'} \operatorname{d}\!x \\
 &\leq& C\left[\mathcal{A}\right]_{\operatorname {BMO}}^{\sigma}\left(\frac{3R}{2}\right)
    \left(\fint_{B_{2R}}\!\left(\mu+|D{\bf v}|\right)\operatorname{d}\!x\right)^{p}
    \!+C\fint_{B_{\frac{3R}{2}}}\!\left[\left(\mu^{2}+\left|D{\bf v}\right|^{2}+\left|D{\bf  w}\right|^{2}\right)^{\frac{p-2}{2}}|D{\bf v}-D{\bf w}|\right]^{p'}\!\operatorname{d}\!x\,,\nonumber
\end{eqnarray}
where $C=C(n,p,\nu,L)$.

In order to estimate the last term on the right side of \eqref{lemv-w-inequ6}, we proceed in two situations. Regarding to the case of $1<p<2$, we have
\begin{eqnarray}\label{lemv-w-inequ7}
 &&\fint_{B_{\frac{3R}{2}}}\left|\pi_{\bf v}-\pi_{\bf w}\right|^{p'} \operatorname{d}\!x\nonumber\\
 &\leq& C\left[\mathcal{A}\right]_{\operatorname {BMO}}^{\frac{p}{2}\left(1-\frac{1}{\hat{\theta}}\right)}\left(\frac{3R}{2}\right)
    \left(\fint_{B_{2R}}\left(\mu+|D{\bf v}|\right)\operatorname{d}\!x\right)^{p}
    +C\fint_{B_{\frac{3R}{2}}}|D{\bf v}-D{\bf w}|^{p}\operatorname{d}\!x \nonumber\\
    &\leq& C(n,p,\nu,L)\left[\mathcal{A}\right]_{\operatorname {BMO}}^{\frac{p}{2}\left(1-\frac{1}{\hat{\theta}}\right)}\left(\frac{3R}{2}\right)
    \left(\fint_{B_{2R}}\left(\mu+|D{\bf v}|\right)\operatorname{d}\!x\right)^{p},
\end{eqnarray}
where the last inequality is ensured by \eqref{lemv-w-inequ3}.

While for the case of $p\geq2$, we use H\"{o}lder's inequality, \eqref{v-w-p>=2}, Lemma \ref{Dv-control-Dw} and the reserve H\"{o}lder's inequality \eqref{reverse-Dv} to deduce that
\begin{eqnarray}\label{lemv-w-inequ8}
 &&\fint_{B_{\frac{3R}{2}}}\left[\left(\mu^{2}+\left|D{\bf v}\right|^{2}+\left|D{\bf  u}\right|^{2}\right)^{\frac{p-2}{2}}|D{\bf v}-D{\bf w}|\right]^{p'}\operatorname{d}\!x \nonumber\\
 &\leq&\left(\fint_{B_{\frac{3R}{2}}}\left(\mu^{2}+\left|D{\bf v}\right|^{2}+\left|D{\bf  w}\right|^{2}\right)^{\frac{p-2}{2}}|D{\bf v}-D{\bf w}|^{2}\operatorname{d}\!x\right)^{\frac{p}{2(p-1)}}\nonumber\\
 &&\times\left(\fint_{B_{\frac{3R}{2}}}\left(\mu^{2}+\left|D{\bf v}\right|^{2}+\left|D{\bf  w}\right|^{2}\right)^{\frac{p}{2}}\operatorname{d}\!x\right)^{\frac{p-2}{2(p-1)}}\nonumber\\
 &\leq&C(n,p,\nu,L)\left[\mathcal{A}\right]_{\operatorname {BMO}}^{\frac{p}{2(p-1)}\left(1-\frac{1}{\theta}\right)}\left(\frac{3R}{2}\right)
    \left(\fint_{B_{2R}}\left(\mu+|D{\bf v}|\right)\operatorname{d}\!x\right)^{p}.
\end{eqnarray}
Inserting \eqref{lemv-w-inequ8} into \eqref{lemv-w-inequ6}, it follows from $p\geq2$ that
\begin{eqnarray}\label{lemv-w-inequ9}
 &&\fint_{B_{\frac{3R}{2}}}\left|\pi_{\bf v}-\pi_{\bf w}\right|^{p'} \operatorname{d}\!x\nonumber\\
 &\leq& C\left[\mathcal{A}\right]_{\operatorname {BMO}}^{1-\frac{1}{\theta}}\left(\frac{3R}{2}\right)
    \left(\fint_{B_{2R}}\left(\mu+|D{\bf v}|\right)\operatorname{d}\!x\right)^{p}
    +C\left[\mathcal{A}\right]_{\operatorname {BMO}}^{\frac{p}{2(p-1)}\left(1-\frac{1}{\theta}\right)}\left(\frac{3R}{2}\right)
    \left(\fint_{B_{2R}}\left(\mu+|D{\bf v}|\right)\operatorname{d}\!x\right)^{p} \nonumber\\
 &\leq& C\left[\mathcal{A}\right]_{\operatorname {BMO}}^{\frac{p}{2(p-1)}\left(1-\frac{1}{\theta}\right)}\left(\frac{3R}{2}\right)
    \left(\fint_{B_{2R}}\left(\mu+|D{\bf v}|\right)\operatorname{d}\!x\right)^{p}.
\end{eqnarray}

Finally, a combination of \eqref{lemv-w-inequ3}, \eqref{lemv-w-inequ7} with \eqref{lemv-w-inequ9} yields that \eqref{inequ-comparisonlemma2} is true for $1<p<+\infty$. Then the proof of Lemma \ref{approximation2} is completed.
\end{proof}

In the end of this section, our intention is to establish the comparison estimate between $D{\bf u}$, $\pi$ with $D{\bf w}$, $\pi_{\bf w}$ by using Lemma \ref{approximation1}\,, Lemma \ref{approximation2}\,, H\"{o}lder's inequality, and combining Lemma \ref{Du-control-Dv} with Lemma \ref{reserve_Holder_Du}\,.
\begin{lemma}\label{Lemma-comparison}
Let $\left( {\bf u},\pi\right)$ be a weak solution pair to \eqref{model}\,. Then there exists a weak solution pair $\left({\bf w},\pi_{\bf w}\right)$ to \eqref{ComparisonSystem-2} such that
\begin{eqnarray}\label{inequ-comparison}
&& \fint_{B_{\frac{3R}{2}}(x_{0})}\left(\left|D{\bf u}-D{\bf w}\right|^{p}+\left|\pi-\pi_{\bf  w}\right|^{p'}\right)\operatorname{d}\!x \\
&\leq& C_{1}\left(\varepsilon\chi_{\{p\neq2\}}+\left[\mathcal{A}\right]_{\operatorname{ BMO}}^{\hat{\sigma}}\left(\frac{3R}{2}\right)\right)\left(\fint_{B_{4R}(x_{0})}\left(\mu+\left|D{\bf u}\right|\right)\operatorname{d}\!x\right)^{p}\nonumber\\
&&+\,C_{2}\left(1+\left[\mathcal{A}\right]_{\operatorname{ BMO}}^{\hat{\sigma}}\left(\frac{3R}{2}\right)\right)\fint_{B_{4R}(x_{0})}\!\left|{\bf F}-{\bf F}_{0}\right|^{p'}\operatorname{d}\!x\nonumber
\end{eqnarray}
for any $0<\varepsilon<1$, any constant matrix ${\bf F}_{0}\in \mathbb{R}_{\rm sym}^{n\times n}$, almost all $x_{0}\in\Omega$ and every $B_{4R}(x_{0})\subset\Omega$\,. Here the positive constants $C_{1}=C_{1}(n,p,\nu,L)$, $C_{2}=C_{2}(\varepsilon,n,p,\nu,L)$\,, and $\hat{\sigma}$ is given in \eqref{sigma}.
\end{lemma}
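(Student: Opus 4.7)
The strategy is a straightforward triangle-inequality combination of Lemma \ref{approximation1} (which compares $({\bf u},\pi)$ to $({\bf v},\pi_{\bf v})$) with Lemma \ref{approximation2} (which compares $({\bf v},\pi_{\bf v})$ to $({\bf w},\pi_{\bf w})$). The only real work is reconciling the radii and replacing $D{\bf v}$-averages by $D{\bf u}$-averages on the outer ball $B_{4R}(x_{0})$.

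First, I would use the elementary convexity bound $|a+b|^{q}\leq 2^{q-1}(|a|^{q}+|b|^{q})$ (for $q=p$ and $q=p'$) to split
\begin{equation*}
\fint_{B_{\frac{3R}{2}}(x_{0})}\!\left(|D{\bf u}-D{\bf w}|^{p}+|\pi-\pi_{\bf w}|^{p'}\right)\operatorname{d}\!x \leq C\!\!\fint_{B_{\frac{3R}{2}}(x_{0})}\!\!\bigl(|D{\bf u}-D{\bf v}|^{p}+|\pi-\pi_{\bf v}|^{p'}\bigr)\operatorname{d}\!x + C\!\!\fint_{B_{\frac{3R}{2}}(x_{0})}\!\!\bigl(|D{\bf v}-D{\bf w}|^{p}+|\pi_{\bf v}-\pi_{\bf w}|^{p'}\bigr)\operatorname{d}\!x.
\end{equation*}
Since $B_{\frac{3R}{2}}(x_{0})\subset B_{2R}(x_{0})$, the first term on the right is controlled up to a dimensional constant $(4/3)^{n}$ by the corresponding average over $B_{2R}(x_{0})$. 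Applying Lemma \ref{approximation1} (estimate \eqref{inequ-comparisonlemma1}) then yields the bound
\begin{equation*}
\varepsilon\,\chi_{\{p\neq2\}}\!\left(\fint_{B_{4R}(x_{0})}(\mu+|D{\bf u}|)\operatorname{d}\!x\right)^{\!p}+C\fint_{B_{4R}(x_{0})}|{\bf F}-{\bf F}_{0}|^{p'}\operatorname{d}\!x\,,
\end{equation*}
which already has the desired form.

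For the second piece, Lemma \ref{approximation2} gives
\begin{equation*}
\fint_{B_{\frac{3R}{2}}(x_{0})}\!\bigl(|D{\bf v}-D{\bf w}|^{p}+|\pi_{\bf v}-\pi_{\bf w}|^{p'}\bigr)\operatorname{d}\!x \leq C[\mathcal{A}]_{\operatorname{BMO}}^{\hat{\sigma}}\!\left(\tfrac{3R}{2}\right)\!\left(\fint_{B_{2R}(x_{0})}(\mu+|D{\bf v}|)\operatorname{d}\!x\right)^{\!p}.
\end{equation*}
By H\"older's inequality $\bigl(\fint_{B_{2R}}(\mu+|D{\bf v}|)\bigr)^{p}\leq \fint_{B_{2R}}(\mu^{p}+|D{\bf v}|^{p})\cdot C(p)$, and then Lemma \ref{Du-control-Dv} (inequality \eqref{DuconDv}) replaces $|D{\bf v}|^{p}$ by $\mu^{p}+|D{\bf u}|^{p}$ on the same ball $B_{2R}(x_{0})$. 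Finally I invoke the reverse H\"older inequality of Lemma \ref{reserve_Holder_Du} to pass from $\fint_{B_{2R}}|D{\bf u}|^{p}$ to $\bigl(\fint_{B_{4R}}(\mu+|D{\bf u}|)\bigr)^{p}+\fint_{B_{4R}}|{\bf F}-{\bf F}_{0}|^{p'}$. Multiplying the resulting inequality by $[\mathcal{A}]_{\operatorname{BMO}}^{\hat{\sigma}}(\tfrac{3R}{2})$ produces precisely the mixed $[\mathcal{A}]_{\operatorname{BMO}}^{\hat{\sigma}}$-contributions appearing in both $C_{1}$ and $C_{2}$.

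Adding the two pieces and absorbing constants yields \eqref{inequ-comparison}. There is no genuine obstacle: the estimate is a bookkeeping combination of previously proven lemmas. The only point requiring mild care is the order of operations in the second piece: one must first convert $D{\bf v}$-averages into $D{\bf u}$-averages on $B_{2R}(x_{0})$ via Lemma \ref{Du-control-Dv}, and only then apply the self-improving reverse H\"older estimate of Lemma \ref{reserve_Holder_Du} on $B_{4R}(x_{0})$; doing this correctly explains why the final outer ball is $B_{4R}(x_{0})$ while the BMO seminorm is evaluated at the scale $\tfrac{3R}{2}$.
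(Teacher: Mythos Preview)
Your proposal is correct and follows precisely the approach the paper indicates: the paper does not spell out the proof of Lemma~\ref{Lemma-comparison} but simply states that it follows by combining Lemma~\ref{approximation1}, Lemma~\ref{approximation2}, H\"older's inequality, Lemma~\ref{Du-control-Dv}, and Lemma~\ref{reserve_Holder_Du}. Your triangle-inequality splitting, enlargement from $B_{3R/2}$ to $B_{2R}$, and the chain H\"older $\to$ \eqref{DuconDv} $\to$ \eqref{reholderDu} for the second piece is exactly this combination, and your remark on the order of operations (first Lemma~\ref{Du-control-Dv} on $B_{2R}$, then Lemma~\ref{reserve_Holder_Du} passing to $B_{4R}$) correctly accounts for the scales appearing in the final estimate.
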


\section{Nonlinear potential estimates.}\label{section4}

In this section, we first establish the pointwise gradient estimate in Theorem \ref{Th1}\,. Before proceeding further, we need  to prove the following Campanato type decay estimate for the shear rate $D\bf w$ to \eqref{ComparisonSystem-2} in the plane.
\begin{lemma}\label{lem-w-decayestimate}
Let ${\bf w}$ be the weak solution to \eqref{ComparisonSystem-2} and the dimension $n=2$\,. Then there exist positive constants $\alpha\in(0,1]$ depending only on $p$, $\nu$ and $L$, such that the estimate
\begin{eqnarray}\label{w-decayestimate}
 &&\fint_{B_{\rho R}(x_{0})}\left|D{\bf w}-\left(D{\bf w}\right)_{B_{\rho R}(x_{0})}\right|\operatorname{d}\!x \nonumber\\
 &\leq& \varepsilon\chi_{\{p\neq2\}}\fint_{B_{\frac{3R}{2}}(x_{0})}\left(\mu+\left|D{\bf w}\right|\right)\operatorname{d}\!x+C\,\rho^{\alpha} \fint_{B_{\frac{3R}{2}}(x_{0})}\left|D{\bf w}-\left(D{\bf w}\right)_{B_{\frac{3R}{2}}(x_{0})}\right|\operatorname{d}\!x
\end{eqnarray}
holds for any $\rho\in (0,\frac{3}{2}]$ and $\varepsilon\in(0,1)$, where $C=C(\varepsilon,p,\nu,L)$.
\end{lemma}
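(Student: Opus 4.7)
The plan is to exploit the interior $C^{1,\alpha}$ regularity available for solutions of the homogeneous constant-coefficient nonlinear Stokes system in two dimensions, combined with a dichotomy that handles the degenerate/singular nature of the operator when $p\neq 2$. The essential input, specific to the planar case and accounting for the remark after Theorem~\ref{Th1}, is the uniform H\"older estimate
\[
[D{\bf w}]_{C^{0,\alpha}(B_{R}(x_0))}R^{\alpha}\leq C\fint_{B_{3R/2}(x_0)}(\mu+|D{\bf w}|)\,\operatorname{d}\!x
\]
for some $\alpha=\alpha(p,\nu,L)\in(0,1]$ and $C=C(p,\nu,L)$, whose validity in dimension $n\geq 3$ is the long-standing open problem highlighted in the paper.

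First, I would dispose of the large-$\rho$ regime: for $\rho$ bounded below, say $\rho\in[1/4,3/2]$, the conclusion is trivial. Indeed, applying \eqref{eqn-minimal2} with ${\bf h}=(D{\bf w})_{B_{3R/2}(x_0)}$ together with the volume comparison $|B_{3R/2}|/|B_{\rho R}|\leq C(\rho)$ controls the left-hand side by a constant multiple of $\fint_{B_{3R/2}}|D{\bf w}-(D{\bf w})_{B_{3R/2}}|\,\operatorname{d}\!x$, which is absorbed by the second term on the right after enlarging $C$.

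For the genuinely interesting case of small $\rho$, I would apply the H\"older estimate above to bound the oscillation on $B_{\rho R}$ by $C\rho^{\alpha}\fint_{B_{3R/2}}(\mu+|D{\bf w}|)\,\operatorname{d}\!x$, and then convert this into the desired form by splitting according to the relative size of $\fint_{B_{3R/2}}(\mu+|D{\bf w}|)$ versus the oscillation $\fint_{B_{3R/2}}|D{\bf w}-(D{\bf w})_{B_{3R/2}}|$. In the ``oscillation-dominated'' regime, the preceding bound is immediately of the form $C\rho^{\alpha}\fint|D{\bf w}-(D{\bf w})_{B_{3R/2}}|$, so no $\varepsilon$-term is needed. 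In the opposite ``non-oscillatory'' regime, where $(\mu+|D{\bf w}|)_{B_{3R/2}}$ is essentially balanced by $\mu+|(D{\bf w})_{B_{3R/2}}|$, I would keep the estimate $C\rho^{\alpha}\fint(\mu+|D{\bf w}|)$ as it stands and absorb it into $\varepsilon\fint_{B_{3R/2}}(\mu+|D{\bf w}|)$ by choosing $\rho$ small enough that $C\rho^{\alpha}\leq\varepsilon$; for $\rho$ above this threshold the second term on the right of \eqref{w-decayestimate} already dominates after adjusting $C$. Finally, in the special case $p=2$ the equation is linear and $D{\bf w}-{\bf c}$ solves the same equation for any constant ${\bf c}\in\mathbb{R}^{2\times 2}_{\rm sym}$; shifting by ${\bf c}=(D{\bf w})_{B_{3R/2}}$ before invoking the H\"older estimate yields the clean decay with no error term, which is exactly why the factor $\chi_{\{p\neq 2\}}$ appears.

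The main obstacle is the absence of translation invariance in the nonlinear case: unlike for the $2$-harmonic setting, $D{\bf w}-{\bf c}$ does not solve the same system when $p\neq 2$, so the scale-invariant quantity $\mu+|D{\bf w}|$ in the H\"older estimate cannot be replaced by an oscillation alone. This forces the $\varepsilon$-correction term, which is the price paid for the lack of affine invariance. The whole strategy ultimately rests on the planar $C^{1,\alpha}$ regularity, invoked as a black box from the existing literature on the nonlinear Stokes system in two dimensions, and this is the structural reason why Theorem~\ref{Th1} is confined to $n=2$.
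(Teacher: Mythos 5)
Your proposal is broadly correct, but it follows a genuinely different route than the paper's own proof, and the nature of the ``black box'' you invoke deserves scrutiny. The paper never states or uses a Schauder-type estimate of the form $[D{\bf w}]_{C^{0,\alpha}(B_R)}R^{\alpha}\leq C\fint_{B_{3R/2}}(\mu+|D{\bf w}|)$. What the cited reference \cite[Theorem 3.8]{DiKaSc} actually provides is a Campanato-type decay estimate for the $V_{p}$-transform,
\[
\fint_{B_{\rho R}}\bigl|V_{p}(D{\bf w})-\bigl(V_{p}(D{\bf w})\bigr)_{B_{\rho R}}\bigr|^{2}\operatorname{d}\!x\leq C\rho^{p\alpha}\fint_{B_{R}}\bigl|V_{p}(D{\bf w})-\bigl(V_{p}(D{\bf w})\bigr)_{B_{R}}\bigr|^{2}\operatorname{d}\!x,
\]
where $V_{p}(\xi)=(\mu^{2}+|\xi|^{2})^{\frac{p-2}{4}}\xi$. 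The paper then (i) replaces the mean by $V_{p}$ of the mean via \cite[Lemma~6.2]{DKS}, (ii) converts this into a decay estimate for $\fint|D{\bf w}-(D{\bf w})_{B_{\rho R}}|^{p}$ by a direct case split $p\geq 2$ versus $1<p<2$, which is precisely what manufactures the $\varepsilon$-term, and (iii) lowers the exponent from $p$ to $1$ using a reverse-H\"older estimate from \cite{DK}. No dichotomy on the relative size of the average versus the oscillation is ever invoked, and the $\varepsilon$-term is not produced by a smallness-of-$\rho$ argument but by Young's inequality applied to the mismatch between $|V_{p}(\xi)-V_{p}(\eta)|^{2}$ and $|\xi-\eta|^{p}$.

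Your argument can be made to work, but the ``clean'' $C^{0,\alpha}$ estimate for $D{\bf w}$ with right-hand side $\fint(\mu+|D{\bf w}|)$ is not a literal citation; it is a consequence that must itself be derived from the $V_{p}$-Campanato decay. For $p\geq 2$ one uses $|\xi-\eta|\leq C|V_{p}(\xi)-V_{p}(\eta)|^{2/p}$, which degrades the exponent to $2\alpha/p$ (harmless, but worth noting). For $1<p<2$ the inverse $V_{p}^{-1}$ is only locally Lipschitz with a constant growing like $(\mu+\|D{\bf w}\|_{\infty})^{(2-p)/2}$, so you need the $L^{\infty}$ bound for $D{\bf w}$ (the planar Lipschitz regularity) as an extra ingredient, and then a self-consistent bookkeeping to close the loop. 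In short, the work the paper does with the explicit $p\geq 2$ vs.\ $1<p<2$ split reappears in your approach when you try to substantiate the black box. Your dichotomy then has a redundant feel: the ``oscillation-dominated'' case gives the clean $\rho^{\alpha}$-oscillation bound, but the ``non-oscillatory'' case still requires covering both the small-$\rho$ and large-$\rho$ regimes, and in the large-$\rho$ regime you in any event fall back on the trivial bound coming from \eqref{eqn-minimal2} and $|B_{3R/2}|/|B_{\rho R}|\lesssim \rho^{-2}$, exactly the same bound that already handles the non-oscillatory case directly without the dichotomy. So the dichotomy can be dropped with no loss. Finally, your observation about the special structure at $p=2$ (the affine shift $D{\bf w}\mapsto D{\bf w}-{\bf c}$ with $\operatorname{tr}\,{\bf c}=0$ preserves the divergence-free constraint and the linearity of $\bar{\mathcal{A}}$, yielding the clean decay) is correct and accounts for the indicator $\chi_{\{p\neq 2\}}$, which in the paper's argument arises for the same reason after the case split.

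In summary: the approach is workable and the mathematical content is essentially equivalent, but the paper keeps the estimate at the level of integral-averaged Campanato quantities of $V_{p}(D{\bf w})$ throughout, while you pass through a pointwise H\"older seminorm bound for $D{\bf w}$ which is not directly off-the-shelf and must be derived by the same $p$-case analysis. The paper's route is more economical in its use of the references; yours is conceptually more transparent once the $C^{0,\alpha}$ estimate is accepted.
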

\begin{proof}
We may assume without loss of generality that $\rho\in (0,1]$, since \eqref{lem-w-decayestimate} obviously holds for $1<\rho\leq\frac{3}{2}$. By virtue of Theorem 3.8 in \cite{DiKaSc}, we have
\begin{eqnarray*}
\fint_{B_{\rho R}(x_{0})}\left|V_{p}(D{\bf w})-\left(V_{p}(D{\bf w})\right)_{B_{\rho R}(x_{0})}\right|^{2}\operatorname{d}\!x
  \leq C(p,\nu,L)\rho^{p\alpha}\fint_{B_{R}(x_{0})}\left|V_{p}(D{\bf w})-\left(V_{p}(D{\bf w})\right)_{B_{R}(x_{0})}\right|^{2}\operatorname{d}\!x\,,
\end{eqnarray*}
where $V_{p}(\xi)=\left(\mu^{2}+|\xi|^{2}\right)^{\frac{p-2}{4}}\xi$.
And then, using \cite[Lemma 6.2]{DKS}, the above inequality is equivalent to
\begin{eqnarray*}
&&\fint_{B_{\rho R}(x_{0})}\left|V_{p}(D{\bf w})-V_{p}\left(\left(D{\bf w}\right)_{B_{\rho R}(x_{0})}\right)\right|^{2}\operatorname{d}\!x\\
  &\leq& C(p,\nu,L)\rho^{p\alpha}\fint_{B_{R}(x_{0})}\left|V_{p}(D{\bf w})-V_{p}\left(\left(D{\bf w}\right)_{B_{R}(x_{0})}\right)\right|^{2}\operatorname{d}\!x\,,
\end{eqnarray*}
Now dividing into two cases that $p\geq2$ and $1<p<2$\,, and proceeding similarly as before, we obtain
\begin{eqnarray}\label{Dw-decayestimate}
   && \fint_{B_{\rho R}(x_{0})}\left|D{\bf w}-\left(D{\bf w}\right)_{B_{\rho R}(x_{0})}\right|^{p}\operatorname{d}\!x\nonumber \\
   &\leq&  \varepsilon_{1}\chi_{\{p\neq2\}}\fint_{B_{R}(x_{0})}\left(\mu^{p}+\left|D{\bf w}\right|^{p}\right)\operatorname{d}\!x+C(\varepsilon_{1},p,\nu,L)\rho^{p\alpha}\fint_{B_{R}(x_{0})}\left|D{\bf w}-\left(D{\bf w}\right)_{B_{R}(x_{0})}\right|^{p}\operatorname{d}\!x\,.
\end{eqnarray}
Next, the approach of estimate for the last term on the right side of \eqref{Dw-decayestimate} is to use the reserve H\"{o}lder inequality introduced in \cite{DK} as follows
\begin{eqnarray*}
 && \fint_{B_{ R}(x_{0})}\left|V_{p}(D{\bf w})-V_{p}\left(\left(D{\bf w}\right)_{B_{R}(x_{0})}\right)\right|^{2}\operatorname{d}\!x\\
  &\leq&C(p,\nu,L)\left(\fint_{B_{\frac{3R}{2}}(x_{0})}\left|V_{p}(D{\bf w})-V_{p}\left(\left(D{\bf w}\right)_{B_{\frac{3R}{2}}(x_{0})}\right)\right|^{\frac{2}{p}}\operatorname{d}\!x\right)^{p}\,.
\end{eqnarray*}
The following discussion is still divided into $p\geq2$ and $1<p<2$, then we can derive
\begin{eqnarray}\label{reverse_Dw-(Dw)}
   && \fint_{B_{ R}(x_{0})}\left|D{\bf w}-\left(D{\bf w}\right)_{B_{ R}(x_{0})}\right|^{p}\operatorname{d}\!x\nonumber \\
   &\leq&  \varepsilon_{2}\chi_{\{p\neq2\}}\left(\fint_{B_{\frac{3R}{2}}(x_{0})}\left(\mu+\left|D{\bf w}\right|\right)\operatorname{d}\!x\right)^{p}+C(\varepsilon_{2},\nu,L,p)\left(\fint_{B_{\frac{3R}{2}}(x_{0})}\left|D{\bf w}-\left(D{\bf w}\right)_{B_{\frac{3R}{2}}(x_{0})}\right|\operatorname{d}\!x\right)^{p}\,.\nonumber\\
\end{eqnarray}
Finally, inserting \eqref{reverse_Dw-(Dw)} into \eqref{Dw-decayestimate} and combining H\"{o}lder's inequality with the reverse H\"{o}lder inequality \eqref{reverse-Dw} for $D{\bf w}$, we conclude that
\begin{eqnarray*}
 &&\fint_{B_{\rho R}(x_{0})}\left|D{\bf w}-\left(D{\bf w}\right)_{B_{\rho R}(x_{0})}\right|\operatorname{d}\!x \nonumber\\
 &\leq& \varepsilon\chi_{\{p\neq2\}}\fint_{B_{\frac{3R}{2}}(x_{0})}\left(\mu+\left|D{\bf w}\right|\right)\operatorname{d}\!x+C\,\rho^{\alpha} \fint_{B_{\frac{3R}{2}}(x_{0})}\left|D{\bf w}-\left(D{\bf w}\right)_{B_{\frac{3R}{2}}(x_{0})}\right|\operatorname{d}\!x
\end{eqnarray*}
is valid for any $\rho\in (0,1]$ and $\varepsilon\in(0,1)$, where $C=C(\varepsilon,p,\nu,L)$.
Thus, the proof of Lemma \ref{lem-w-decayestimate} is completed.
\end{proof}

\begin{remark}
Due to the absence of Lipschitz regularity for solution to the corresponding limiting problem \eqref{ComparisonSystem-2} in higher dimensional space, the above Campanato type decay estimate holds only in the plane. This is the immediate trigger for the pointwise gradient estimate established only to the planar flows.
\end{remark}

We now turn our attention to the Campanato type decay estimate of $D{\bf u}$, which is the main ingredient to carry on the proof of Theorem \ref{Th1}\,.
\begin{lemma}\label{lem-u-decayestimate}
Let $\beta\in(0,1)$ and $\left({\bf u},\pi\right)$ be a weak
solution pair to \eqref{model} with ${\bf F}\in
L^{p'}_{\rm loc}(\Omega,\mathbb{R}^{2\times 2}_{\rm sym})$. There exists a positive
constant $\rho=\rho(p,\nu,L,\beta)\in\left(0,\frac{1}{4}\right]$ such that
\begin{eqnarray}\label{u-decayestimate}
&& \left(\fint_{B_{\rho R}(x_{0})}\left|D{\bf
u}-\left(D{\bf
u}\right)_{B_{\rho R}(x_{0})}\right|\operatorname{d}\!x\right)^{p}+
\left(\fint_{B_{\rho R}(x_{0})}\left|{\pi}-\left(\pi\right)_{B_{\rho R}(x_{0})}\right|\operatorname{d}\!x\right)^{p'}\nonumber\\
&\leq &\beta C_{1}\left(\fint_{B_{R}(x_{0})}\left|D{\bf
u}-\left(D {\bf
u}\right)_{B_{R}(x_{0})}\right|\operatorname{d}\!x\right)^{p}
 + C_{2}\fint_{B_{R}(x_{0})}\left|{\bf F}-{\bf F}_{0}\right|^{p'}\operatorname{d}\!x\nonumber\\
&&+\,C_{3}\left(\left[\mathcal{A}\right]_{\operatorname{ BMO}}^{\hat{\sigma}}(R)+\varepsilon\chi_{\{p\neq2\}}\right)\left(\fint_{B_{R}(x_{0})}\left(\mu+\left|D{\bf
u}\right|\right)\operatorname{d}\!x\right)^{p}
\end{eqnarray}
for any $\varepsilon\in(0,1)$, any constant matrix ${\bf F}_{0}\in \mathbb{R}_{\rm sym}^{2\times 2}$ and every $B_{R}(x_{0})\subset\Omega$, where the positive constants $C_{1}=C_{1}(\varepsilon,\nu,L,p)$\,, $C_{2}=C_{2}(\varepsilon,\nu,L,p,\beta)$\,, $C_{3}=C_{3}(\nu,L,p,\beta)$\,, and $\hat{\sigma}$ is given in \eqref{sigma}\,.
\end{lemma}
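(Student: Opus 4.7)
The strategy is to glue the comparison estimate of Lemma \ref{Lemma-comparison} to the Campanato decay of Lemma \ref{lem-w-decayestimate} using the minimality property \eqref{eqn-minimal2}. By \eqref{eqn-minimal2} with $h=(D{\bf w})_{B_{\rho R}}$ and the triangle inequality,
\[
\fint_{B_{\rho R}}|D{\bf u}-(D{\bf u})_{B_{\rho R}}|\operatorname{d}\!x\le 2\fint_{B_{\rho R}}|D{\bf u}-D{\bf w}|\operatorname{d}\!x+2\fint_{B_{\rho R}}|D{\bf w}-(D{\bf w})_{B_{\rho R}}|\operatorname{d}\!x,
\]
and the analogue with $\pi,\pi_{\bf w}$ in place of $D{\bf u}, D{\bf w}$. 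Raising these to the $p$-th (resp.\ $p'$-th) power splits the left-hand side of \eqref{u-decayestimate} into a comparison piece and a homogeneous-decay piece.

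For the comparison piece, I apply Lemma \ref{Lemma-comparison} with $R$ replaced by $R/4$, so that the inner ball $B_{3R/8}$ contains $B_{\rho R}$ (since $\rho\le 1/4$). Passing from $B_{\rho R}$ to $B_{3R/8}$ via Jensen's inequality produces a factor $\rho^{-2}$, which is harmless because the resulting bound already carries the small prefactor $\varepsilon\chi_{\{p\neq 2\}}+[\mathcal{A}]_{\operatorname{BMO}}^{\hat\sigma}(R)$ in front of $\bigl(\fint_{B_R}(\mu+|D{\bf u}|)\bigr)^{p}$ and only a universal constant in front of $\fint_{B_R}|{\bf F}-{\bf F}_0|^{p'}$. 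These two contributions therefore fall into the $C_3$ and $C_2$ bands of \eqref{u-decayestimate}, respectively, after absorbing $\rho^{-2}$ into the constants (which is allowed since $\rho=\rho(\beta)$).

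For the homogeneous piece, I apply Lemma \ref{lem-w-decayestimate} on $B_{\rho R}\subset B_{3R/8}$. The term $\varepsilon\chi_{\{p\neq 2\}}\fint(\mu+|D{\bf w}|)$ is turned into the desired $(\mu+|D{\bf u}|)$-form by successively invoking Lemma \ref{Dv-control-Dw}, Lemma \ref{Du-control-Dv} and the reverse H\"older inequality of Lemma \ref{reserve_Holder_Du}, producing another contribution to the $C_3$ band. The remaining term is $C\rho^{\alpha}\fint_{B_{3R/8}}|D{\bf w}-(D{\bf w})_{B_{3R/8}}|$. A further application of \eqref{eqn-minimal2} with $h=(D{\bf u})_{B_R}$ and the triangle inequality gives
\[
\fint_{B_{3R/8}}|D{\bf w}-(D{\bf w})_{B_{3R/8}}|\le 2\fint_{B_{3R/8}}|D{\bf w}-D{\bf u}|+2\fint_{B_{3R/8}}|D{\bf u}-(D{\bf u})_{B_R}|,
\]
so upon taking the $p$-th power the first summand is again fed into the $C_2,C_3$ bands through Lemma \ref{Lemma-comparison}, while the second produces exactly a term with prefactor $C\rho^{p\alpha}$ multiplying $\bigl(\fint_{B_R}|D{\bf u}-(D{\bf u})_{B_R}|\bigr)^p$. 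Choosing $\rho=\rho(p,\nu,L,\beta)$ so that $C\rho^{p\alpha}\le\beta$ delivers the $\beta C_1$ coefficient of the iterable oscillation term, with $C_1$ independent of $\beta$.

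The pressure estimate is run in parallel: Lemma \ref{Lemma-comparison} bounds $|\pi-\pi_{\bf w}|^{p'}$ and feeds the $C_2,C_3$ bands exactly as above, while the oscillation of $\pi_{\bf w}$ is controlled by an analogue of Lemma \ref{lem-w-decayestimate} for the pressure of the constant-coefficient limiting system, which follows from the same planar Lipschitz regularity underlying Lemma \ref{lem-w-decayestimate} together with the equation $\nabla\pi_{\bf w}=\operatorname{div}\bar{\mathcal{A}}(D{\bf w})$. I expect the main obstacle to be the bookkeeping: the $\rho^{-2}$ losses from the Jensen step must remain attached only to the small coefficients $\varepsilon\chi_{\{p\neq 2\}}+[\mathcal{A}]_{\operatorname{BMO}}^{\hat\sigma}(R)$ or to the ${\bf F}$-term, so that $C_1$ stays $\beta$-free while $C_2,C_3$ may freely inherit the $\rho$-dependence. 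A secondary subtlety is to establish the Campanato decay for $\pi_{\bf w}$ in an $L^{p'}$ form compatible with the pressure estimate.
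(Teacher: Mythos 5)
Your decomposition strategy for the shear-rate part of the estimate is essentially the paper's: split the oscillation of $D{\bf u}$ at scale $\rho R$ into a comparison piece (fed to Lemma \ref{Lemma-comparison}) and a homogeneous-decay piece (fed to Lemma \ref{lem-w-decayestimate}), convert the $D{\bf w}$ oscillation back to $D{\bf u}$ oscillation with a triangle inequality and \eqref{eqn-minimal2}, and pick $\rho$ so that $C\rho^{\alpha p}\le\beta$. The accounting of the Jensen factor $\rho^{-2}$ into the $\beta$-dependent constants $C_{2},C_{3}$ (while keeping $C_{1}$ $\beta$-free) also matches the paper.

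The genuine gap is in the treatment of the pressure oscillation $\fint_{B_{\rho R}}|\pi_{\bf w}-(\pi_{\bf w})_{B_{\rho R}}|^{p'}$. You propose to prove a Campanato decay for $\pi_{\bf w}$ itself ("an analogue of Lemma \ref{lem-w-decayestimate} for the pressure"), and explicitly defer the construction as a "secondary subtlety." But this is not a bookkeeping point: if carried out, it would leave a term proportional to $\rho^{\alpha p'}\bigl(\fint_{B_{R}}|\pi_{\bf w}-(\pi_{\bf w})_{B_{R}}|\bigr)^{p'}$ (hence, after comparison, a $\pi$-oscillation term) on the right-hand side of \eqref{u-decayestimate}. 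No such term appears in the statement — the right-hand side has only the $D{\bf u}$ oscillation with the small factor $\beta$ — so the iteration in the proof of Theorem \ref{Th1} would no longer close as written, and you would have to switch to a coupled two-variable recursion. The paper avoids this entirely by a different idea: it tests the constant-coefficient system with a Bogovski\u{\i}-type divergence corrector (the auxiliary problem \eqref{auxiliaryprob3} solved via Lemma \ref{existence-Johndomain}), which bounds the pressure oscillation at scale $\rho R$ directly in terms of the $D{\bf w}$ oscillation at scale $\frac{3}{2}\rho R$, plus lower-order pieces; then Lemma \ref{lem-w-decayestimate} applied to $D{\bf w}$ does all the decay work and no pressure-decay estimate is ever needed. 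That reduction — pressure oscillation $\mapsto$ shear-rate oscillation via the Bogovski\u{\i} operator — is the step you are missing, and it is what keeps the right-hand side of \eqref{u-decayestimate} in the form that the telescoping argument of Theorem \ref{Th1} can digest.
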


\begin{proof}
In order to prove this technical lemma, we apply H\"{o}lder's inequality with \eqref{eqn-minimal2} to obtain
\begin{eqnarray}\label{lem-u-decay-inequ1}
&& \left(\fint_{B_{\rho R}(x_{0})}\left|D{\bf
u}-\left(D{\bf
u}\right)_{B_{\rho R}(x_{0})}\right|\operatorname{d}\!x\right)^{p}+
\left(\fint_{B_{\rho R}(x_{0})}\left|{\pi}-\left(\pi\right)_{B_{\rho R}(x_{0})}\right|
\operatorname{d}\!x\right)^{p'}\nonumber\\
&\leq&\frac{C(p)}{\rho^{2p}}\left[\fint_{B_{\frac{3R}{2}}(x_{0})}\left(\left|D{\bf
u}-D{\bf
w}\right|^{p}+\left|{\pi}-\pi_{\bf w}\right|^{p'}\right)\operatorname{d}\!x\right]+C(p)\left(\fint_{B_{\rho R}(x_{0})}\left|D{\bf
w}-\left(D{\bf
w}\right)_{B_{\rho R}(x_{0})}\right|\operatorname{d}\!x\right)^{p}\nonumber\\
&&+\,C(p)\left(\fint_{B_{\rho R}(x_{0})}
\left|\pi_{\bf w}-\left(\pi_{\bf w}\right)_{B_{\rho R}(x_{0})}\right|\operatorname{d}\!x\right)^{p'}
\end{eqnarray}
for any $0<\rho\leq1$.

We first estimate the last term on the right side
of \eqref{lem-u-decay-inequ1}\,. Let $\psi\in W_{0}^{1,p}\left(B_{\rho R}(x_{0}),\mathbb{R}^{2}\right)$ be a test function of \eqref{ComparisonSystem-2}, i.e.,
\begin{eqnarray}\label{lem-u-decay-equ2}
 && \fint_{B_{\rho R}(x_{0})}\left(\pi_{\bf w}-\left(\pi_{\bf w}\right)_{B_{\rho R}(x_{0})}\right)\operatorname{div}\psi \operatorname{d}\!x\nonumber\\
 &=& \fint_{B_{\rho R}(x_{0})}\big\langle\left(\mathcal{A}\left(\cdot,D{\bf w}\right)\right)_{B_{\frac{3R}{2}}(x_{0})}-\left(\mathcal{A}\left(\cdot,\left(D{\bf w}\right)_{B_{\rho R}(x_{0})}\right)\right)_{B_{\frac{3R}{2}}(x_{0})},D\psi\big\rangle \operatorname{d}\!x\,.
\end{eqnarray}
More precisely, we select the above $\psi$ be a solution to the following auxiliary problem
\begin{equation}\label{auxiliaryprob3}
\left\{\begin{array}{r@{\ \ }c@{\ \ }ll}
\operatorname{div}{\bf \psi} & =&\operatorname{sgn}\left(\pi_{\bf w}-\left(\pi_{\bf w}\right)_{B_{\rho R}(x_{0})} \right)\left|\pi_{w}-\left(\pi_{\bf w}\right)_{B_{\rho R}(x_{0})}\right|^{\frac{1}{p-1}}\\
&&
-\left(\operatorname{sgn}\left(\pi_{\bf w}-\left(\pi_{\bf w}\right)_{B_{\rho R}(x_{0})} \right)\left|\pi_{\bf w}-\left(\pi_{\bf w}\right)_{B_{\rho R}(x_{0})} \right|^{\frac{1}{p-1}}\right)_{B_{\rho R}(x_{0})}
 & \mbox{in}\ \ B_{\rho R}(x_{0})\,, \\[0.05cm]
{\bf \psi} &=&  0 & \mbox{on}\ \ \partial B_{\rho R}(x_{0})\,,
\end{array}\right.
\end{equation}
where the nonhomogeneous term belongs to $L^{p}(B_{\rho R}(x_{0}))$ and satisfies
\begin{eqnarray*}
  &\displaystyle\int_{B_{\rho R}(x_{0})}&\operatorname{sgn}\left(\pi_{\bf w}-\left(\pi_{\bf w}\right)_{B_{\rho R}(x_{0})} \right)\left|\pi_{w}-\left(\pi_{\bf w}\right)_{B_{\rho R}(x_{0})}\right|^{\frac{1}{p-1}} \\
  &&  -\left(\operatorname{sgn}\left(\pi_{\bf w}-\left(\pi_{\bf w}\right)_{B_{\rho R}(x_{0})} \right)\left|\pi_{\bf w}-\left(\pi_{\bf w}\right)_{B_{\rho R}(x_{0})} \right|^{\frac{1}{p-1}}\right)_{B_{\rho R}(x_{0})}\operatorname{d}\!x=0\,.
\end{eqnarray*}
Then Lemma \ref{existence-Johndomain} infers that there exists a
 solution to auxiliary problem
\eqref{auxiliaryprob3} such that
\begin{equation}\label{lem-u-decay-inequ3}
  \left\|\bf\nabla\psi\right\|_{L^{p}\left(B_{\rho R}(x_{0})\right)}\leq C(p)\left\|\pi_{\bf w}-\left(\pi_{\bf w}\right)_{B_{\rho R}(x_{0})}\right\|_{L^{p'}\left(B_{\rho R}(x_{0})\right)}.
\end{equation}
Substituting such $\psi$ into equality
\eqref{lem-u-decay-equ2} and combining Young's inequality with \eqref{structural-cons}, \eqref{lem-u-decay-inequ3} and reserve H\"{o}lder type inequality \eqref{reverse-Dw}\,, we deduce that
\begin{eqnarray*}
   && \fint_{B_{\rho R}(x_{0})}\left|\pi_{\bf w}-\left(\pi_{\bf w}\right)_{B_{\sigma\!R}(x_{0})}\right|^{p'} \operatorname{d}\!x\nonumber\\
  &\leq& C(\tau,L,p)\fint_{B_{\rho R}(x_{0})}\left[\left(\mu^{2}+\left|D{\bf
w}\right|^{2}+\left|\left(D{\bf w}\right)_{B_{\rho R}(x_{0})}\right|^{2}\right)^{\frac{p-2}{2}}\left|D{\bf w}-\left(D{\bf w}\right)_{B_{\rho R}(x_{0})}\right|\right]^{p'}\operatorname{d}\!x\\
&&+\,\tau \fint_{B_{\rho R}(x_{0})}\left|\nabla\psi\right|^{p}\operatorname{d}\!x \\
&\leq& \varepsilon\chi_{\{p>2\}}\fint_{B_{\rho R}(x_{0})}\left(\mu^{p}+\left|D{\bf
w}\right|^{p}\right)\operatorname{d}\!x+C(\varepsilon,\tau,L,p)\fint_{B_{\rho R}(x_{0})}\left|D{\bf w}-\left(D{\bf w}\right)_{B_{\rho R}(x_{0})}\right|^{p}\operatorname{d}\!x\\
&&+\,C(p)\,\tau \fint_{B_{\rho R}(x_{0})}\left|\pi_{\bf w}-\left(\pi_{\bf w}\right)_{B_{\rho R}(x_{0})}\right|^{p'} \operatorname{d}\!x \\
   &\leq& C(\nu,L,p)\varepsilon\chi_{\{p>2\}}\left(\fint_{B_{\frac{3}{2}\rho R}(x_{0})}\left(\mu+\left|D{\bf
w}\right|\right)\operatorname{d}\!x\right)^{p}+C(\varepsilon,\tau,L,p)\fint_{B_{\rho R}(x_{0})}\left|D{\bf w}-\left(D{\bf w}\right)_{B_{\rho R}(x_{0})}\right|^{p}\operatorname{d}\!x\\
&&+\,C(p)\,\tau \fint_{B_{\rho R}(x_{0})}\left|\pi_{\bf w}-\left(\pi_{\bf w}\right)_{B_{\rho R}(x_{0})}\right|^{p'} \operatorname{d}\!x\,.
\end{eqnarray*}
Selecting the positive constant $\tau=\frac{1}{2C(p)}$, we apply H\"{o}lder's inequality and \eqref{reverse_Dw-(Dw)} to derive that
\begin{eqnarray}\label{lem-u-decay-inequ4}
   &&\fint_{B_{\rho R}(x_{0})}\left|\pi_{\bf w}-\left(\pi_{\bf w}\right)_{B_{\rho R}(x_{0})}\right|^{p'} \operatorname{d}\!x\nonumber\\
   &\leq& C(\nu,L,p)\varepsilon\chi_{\{p>2\}}\left(\fint_{B_{\frac{3}{2}\rho R}(x_{0})}\left(\mu+\left|D{\bf
w}\right|\right)\operatorname{d}\!x\right)^{p}+C(\varepsilon,L,p)\fint_{B_{\rho R}(x_{0})}\left|D{\bf w}-\left(D{\bf w}\right)_{B_{\rho R}(x_{0})}\right|^{p}\operatorname{d}\!x \nonumber\\
&\leq& C(\nu,L,p)\varepsilon\chi_{\{p>2\}}\left[\rho^{-2p}\fint_{B_{\frac{3R}{2}}(x_{0})}\left(\mu^{p}+\left|D{\bf u}-D{\bf
w}\right|^{p}\right)\operatorname{d}\!x+\left(\fint_{B_{\frac{3}{2}\rho R}(x_{0})}\left|D{\bf
u}\right|\operatorname{d}\!x\right)^{p}\,\right]\nonumber\\
&&+\,C(\varepsilon,L,p)\fint_{B_{\rho R}(x_{0})}\left|D{\bf w}-\left(D{\bf w}\right)_{B_{\rho R}(x_{0})}\right|^{p}\operatorname{d}\!x\nonumber\\
&\leq& C(\nu,L,p)\varepsilon\chi_{\{p\neq2\}}\left[\,\rho^{-2p}\fint_{B_{\frac{3R}{2}}(x_{0})}\left(\mu^{p}+\left|D{\bf u}-D{\bf
w}\right|^{p}\right)\operatorname{d}\!x+\left(\fint_{B_{\frac{3}{2}\rho R}(x_{0})}\left|D{\bf
u}\right|\operatorname{d}\!x\right)^{p}\,\right]\nonumber\\
&&+\,C(\varepsilon,\nu,L,p)\left(\fint_{B_{\frac{3}{2}\rho R}(x_{0})}\left|D{\bf w}-\left(D{\bf w}\right)_{B_{\frac{3}{2}\rho R}(x_{0})}\right|\operatorname{d}\!x\right)^{p}\,.
\end{eqnarray}

Next, inserting \eqref{lem-u-decay-inequ4} into \eqref{lem-u-decay-inequ1}\,, and combining Lemma \ref{lem-w-decayestimate} with Lemma \ref{Lemma-comparison} together with the non-decreasing function $\left[\mathcal{A}\right]_{\operatorname{ BMO}}(\cdot)$, we obtain that
\begin{eqnarray}\label{lem-u-decay-inequ5}
 &&  \left(\fint_{B_{\rho R}(x_{0})}\left|D{\bf
u}-\left(D{\bf
u}\right)_{B_{\rho R}(x_{0})}\right|\operatorname{d}\!x\right)^{p}+
\left(\fint_{B_{\rho R}(x_{0})}\left|{\pi}-\left(\pi\right)_{B_{\rho R}(x_{0})}\right|
\operatorname{d}\!x\right)^{p'}\nonumber\\
&\leq&C\rho^{-2p}\fint_{B_{\frac{3R}{2}}(x_{0})}\left(\left|D{\bf
u}-D{\bf
w}\right|^{p}+\left|{\pi}-\pi_{\bf w}\right|^{p'}\right)\operatorname{d}\!x+C\varepsilon\chi_{\{p\neq2\}}\rho^{-2p}\left(\fint_{B_{2R}(x_{0})}\left(\mu+\left|D{\bf
u}\right|\right)\operatorname{d}\!x\right)^{p}\nonumber\\
&&+\,C\rho^{\alpha p}\left(\fint_{B_{\frac{3R}{2}}(x_{0})}\left|D{\bf
w}-\left(D{\bf
w}\right)_{B_{\frac{3R}{2}}(x_{0})}\right|\operatorname{d}\!x\right)^{p} \nonumber\\
&\leq&C\rho^{\alpha p}\left(\fint_{B_{\frac{3R}{2}}(x_{0})}\left|D{\bf
u}-\left(D{\bf
u}\right)_{B_{\frac{3R}{2}}(x_{0})}\right|\operatorname{d}\!x\right)^{p}+\hat{C}_{\rho}\left(1+\left[\mathcal{A}\right]_{\operatorname{ BMO}}^{\hat{\sigma}}\left(\frac{3R}{2}\right)\right)\fint_{B_{4R}(x_{0})}\!\left|{\bf F}-{\bf F}_{0}\right|^{p'}\operatorname{d}\!x\nonumber\\
&&+\,C_{\rho}\left(\left[\mathcal{A}\right]_{\operatorname{ BMO}}^{\hat{\sigma}}\left(\frac{3R}{2}\right)+\varepsilon\chi_{\{p\neq2\}}\right)\left(\fint_{B_{4R}(x_{0})}
\left(\mu+\left|D{\bf u}\right|\right)\operatorname{d}\!x\right)^{p} \nonumber\\
&\leq&C\rho^{\alpha p}\left(\fint_{B_{4R}(x_{0})}\left|D{\bf
u}-\left(D{\bf
u}\right)_{B_{4R}(x_{0})}\right|\operatorname{d}\!x\right)^{p}+\hat{C}_{\rho}\left(1+\left[\mathcal{A}\right]_{\operatorname{ BMO}}^{\hat{\sigma}}(4R)\right)\fint_{B_{4R}(x_{0})}\!\left|{\bf F}-{\bf F}_{0}\right|^{p'}\operatorname{d}\!x\nonumber\\
&&+\,C_{\rho}\left(\left[\mathcal{A}\right]_{\operatorname{ BMO}}^{\hat{\sigma}}(4R)+\varepsilon\chi_{\{p\neq2\}}\right)\left(\fint_{B_{4R}(x_{0})}\left(\mu+\left|D{\bf
u}\right|\right)\operatorname{d}\!x\right)^{p}
\end{eqnarray}
for any $0<\rho\leq1$ and $0<\varepsilon<1$, where the positive constants $C=C(\varepsilon,p,\nu,L)$, $C_{\rho}=C_{\rho}(\rho,p,\nu,L)$ and $\hat{C}_{\rho}=\hat{C}_{\rho}(\varepsilon,\rho,p,\nu,L)$.
It is plain that \eqref{lem-u-decay-inequ5} is equivalent to the inequality
\begin{eqnarray*}
&&\left(\fint_{B_{\rho R}(x_{0})}\left|D{\bf
u}-\left(D{\bf
u}\right)_{B_{\rho R}(x_{0})}\right|\operatorname{d}\!x\right)^{p}+
\left(\fint_{B_{\rho R}(x_{0})}\left|{\pi}-\left(\pi\right)_{B_{\rho R}(x_{0})}\right|
\operatorname{d}\!x\right)^{p'}\\
 &\leq&C\rho^{\alpha p}\left(\fint_{B_{R}(x_{0})}\left|D{\bf
u}-\left(D{\bf
u}\right)_{B_{R}(x_{0})}\right|\operatorname{d}\!x\right)^{p}+\hat{C}_{\rho}\left(1+\left[\mathcal{A}\right]_{\operatorname{ BMO}}^{\hat{\sigma}}(R)\right)\fint_{B_{R}(x_{0})}\!\left|{\bf F}-{\bf F}_{0}\right|^{p'}\operatorname{d}\!x\nonumber\\
&&+\,C_{\rho}\left(\left[\mathcal{A}\right]_{\operatorname{ BMO}}^{\hat{\sigma}}(R)+\varepsilon\chi_{\{p\neq2\}}\right)\left(\fint_{B_{R}(x_{0})}\left(\mu+\left|D{\bf
u}\right|\right)\operatorname{d}\!x\right)^{p}
\end{eqnarray*}
for any $0<\rho\leq\frac{1}{4}$ and $0<\varepsilon<1$.

Finally, selecting $\rho$ small enough such that $\rho^{\alpha p}\leq\beta$ for any $\beta\in(0,1)$, the above inequality implies that the desired decay estimate \eqref{u-decayestimate} holds for any $B_{R}(x_{0})\subset\Omega$. Thus, the proof of Lemma \ref{lem-u-decayestimate} is completed.
\end{proof}

Based on the above preparations, we are now in a position to prove Theorem \ref{Th1}\,.

\begin{proof}[Proof of Theorem \ref{Th1}] Without loss of generality, we may assume that
\begin{equation*}
  \int_{0}^{2R}\left( \fint_{B_{\varrho}(x_{0})}\left|{\bf F}-({\bf F})_{B_{\varrho}(x_{0})}\right|^{p'}\operatorname{d}\!x\right)^{\frac{1}{p}}\frac{\operatorname{d}\!\varrho}{\varrho}<\infty\,,
\end{equation*}
otherwise \eqref{gradient-estimate} is trivial. Through a direct calculation, one has
\begin{eqnarray}\label{th1-inequ1}
  \left|\fint_{B_{\rho^{k}R}(x_{0})}D{\bf u}\operatorname{d}\!x-\fint_{B_{R}(x_{0})}D{\bf u}\operatorname{d}\!x \right| &=& \left|\sum_{i=0}^{k-1}\left(\fint_{B_{\rho^{i+1}R}(x_{0})}D{\bf u}\operatorname{d}\!x-\fint_{B_{\rho^{i}R}(x_{0})}D{\bf u}\operatorname{d}\!x\right) \right| \nonumber \\
  &\leq& \rho^{-2}\sum_{i=0}^{k-1}\left|\fint_{B_{\rho^{i}R}(x_{0})}\left(D{\bf u}-\left(D{\bf u}\right)_{B_{\rho^{i}R}(x_{0})}\right)\operatorname{d}\!x \right|
\end{eqnarray}
for any $k\in\mathbb{N}$. Similarly, the following inequality
\begin{equation}\label{th1-inequ2}
  \left|\fint_{B_{\rho^{k}R}(x_{0})}\pi\operatorname{d}\!x-\fint_{B_{R}(x_{0})}\pi\operatorname{d}\!x \right| \leq \rho^{-2}\sum_{i=0}^{k-1}\left|\fint_{B_{\rho^{i}R}(x_{0})}\left(\pi-\left(\pi\right)_{B_{\rho^{i}R}(x_{0})}\right)\operatorname{d}\!x \right|.
\end{equation}
holds for any $k\in\mathbb{N}$.

We first use the decay inequality \eqref{u-decayestimate} established in Lemma \ref{lem-u-decayestimate} to derive that
\begin{eqnarray}\label{th1-inequ3}
  && \sum_{i=1}^{k}\left[\fint_{B_{\rho^{i}\!R}(x_{0})}\left|D{\bf
u}-\left(D{\bf
u}\right)_{B_{\rho^{i}\!R}(x_{0})}\right|\operatorname{d}\!x
+\left(\fint_{B_{\rho^{i}\!R}(x_{0})}\left|{\pi}-\left(\pi\right)_{B_{\rho^{i}\!R}(x_{0})}\right|\operatorname{d}\!x
\right)^{\frac{p'}{p}}\right] \nonumber\\
&\leq &C_{1}\beta^{\frac{1}{p}}\sum_{i=0}^{k-1}\fint_{B_{\rho^{i}R}(x_{0})}\left|D{\bf
u}-\left(D {\bf
u}\right)_{B_{\rho^{i}R}(x_{0})}\right|\operatorname{d}\!x
 + C_{2}\sum_{i=0}^{k-1}\left(\fint_{B_{\rho^{i}R}(x_{0})}\left|{\bf F}-({\bf F})_{B_{\rho^{i}R}(x_{0})}\right|^{p'}\operatorname{d}\!x\right)^{\frac{1}{p}}\nonumber\\
&&+\,C_{3}\sum_{i=0}^{k-1}\left(\left[\mathcal{A}\right]_{\operatorname{ BMO}}^{\frac{\hat{\sigma}}{p}}(\rho^{i}R)+\varepsilon^{\frac{1}{p}}\right)\fint_{B_{\rho^{i}R}(x_{0})}\left(\mu+\left|D{\bf
u}\right|\right)\operatorname{d}\!x \nonumber\\
&\leq &\sum_{i=0}^{k-1}\left[C_{1}\beta^{\frac{1}{p}}+C_{3}\left(\left[\mathcal{A}\right]_{\operatorname{ BMO}}^{\frac{\hat{\sigma}}{p}}(\rho^{i}R)+\varepsilon^{\frac{1}{p}}\right)\right]\fint_{B_{\rho^{i}R}(x_{0})}\left(\mu+\left|D{\bf
u}-\left(D {\bf
u}\right)_{B_{\rho^{i}R}(x_{0})}\right|\right)\operatorname{d}\!x\nonumber\\
&&+\,C_{3}\sum_{i=0}^{k-1}\left(\left[\mathcal{A}\right]_{\operatorname{ BMO}}^{\frac{\hat{\sigma}}{p}}(\rho^{i}R)+\varepsilon^{\frac{1}{p}}\right)\left|\fint_{B_{\rho^{i}R}(x_{0})}D{\bf
u}\operatorname{d}\!x\right|\nonumber\\
&&+ C_{2}\sum_{i=0}^{k-1}\left(\fint_{B_{\rho^{i}R}(x_{0})}\left|{\bf F}-({\bf F})_{B_{\rho^{i}R}(x_{0})}\right|^{p'}\operatorname{d}\!x\right)^{\frac{1}{p}}.
\end{eqnarray}
We estimate the integral term involving $\bf F$ in \eqref{th1-inequ3} as follows
\begin{eqnarray*}
  && \sum_{i=0}^{k-1}\left(\fint_{B_{\rho^{i}R}(x_{0})}\left|{\bf F}-({\bf F})_{B_{\rho^{i}R}(x_{0})}\right|^{p'}\operatorname{d}\!x\right)^{\frac{1}{p}}\nonumber \\
   &\leq&  \left(\fint_{B_{R}(x_{0})}\left|{\bf F}-({\bf F})_{B_{R}(x_{0})}\right|^{p'}\operatorname{d}\!x\right)^{\frac{1}{p}}+ \sum_{i=1}^{\infty}\left(\fint_{B_{\rho^{i}R}(x_{0})}\left|{\bf F}-({\bf F})_{B_{\rho^{i}R}(x_{0})}\right|^{p'}\operatorname{d}\!x\right)^{\frac{1}{p}} \nonumber\\
   &=&\frac{1}{\ln2}\int_{R}^{2R}\left(\fint_{B_{R}(x_{0})}\left|{\bf F}-({\bf F})_{B_{R}(x_{0})}\right|^{p'}\operatorname{d}\!x\right)^{\frac{1}{p}}\frac{\operatorname{d}\!\varrho}{\varrho}\\
&&+\frac{1}{\ln\frac{1}{\rho}}
\sum_{i=1}^{\infty}\int_{\rho^{i}R}^{\rho^{i-1}R}\left(\fint_{B_{\rho^{i}R}(x_{0})}\left|{\bf F}-({\bf F})_{B_{\rho^{i}R}(x_{0})}\right|^{p'}\operatorname{d}\!x\right)^{\frac{1}{p}} \frac{\operatorname{d}\!\varrho}{\varrho} \nonumber\\
  &\leq& \frac{2^{\frac{2}{p}}}{\ln2}\int_{R}^{2R}\left(\fint_{B_{\varrho}(x_{0})}\left|{\bf F}-({\bf F})_{B_{\varrho}(x_{0})}\right|^{p'}\operatorname{d}\!x\right)^{\frac{1}{p}}\frac{\operatorname{d}\!\varrho}{\varrho}\\
  &&+
\frac{1}{\rho^{\frac{2}{p}}\ln\frac{1}{\rho}}
\sum_{i=1}^{\infty}\int_{\rho^{i}R}^{\rho^{i-1}R}\left(\fint_{B_{\varrho}(x_{0})}\left|{\bf F}-({\bf F})_{B_{\varrho}(x_{0})}\right|^{p'}\operatorname{d}\!x\right)^{\frac{1}{p}} \frac{\operatorname{d}\!\varrho}{\varrho} \nonumber\\
&\leq&C(p,\nu,L,\beta)\int_{0}^{2R}\left(\fint_{B_{\varrho}(x_{0})}\left|{\bf F}-({\bf F})_{B_{\varrho}(x_{0})}\right|^{p'}\operatorname{d}\!x\right)^{\frac{1}{p}}\frac{\operatorname{d}\!\varrho}{\varrho}.
\end{eqnarray*}
Then inserting the above estimate into \eqref{th1-inequ3}\,, and recalling that $\left[\mathcal{A}\right]_{\operatorname{ BMO}}(R)$ is a non-decreasing function with respect to $R$\,, one obtains
\begin{eqnarray}\label{th1-inequ4}
  && \sum_{i=1}^{k}\left[\fint_{B_{\rho^{i}\!R}(x_{0})}\left|D{\bf
u}-\left(D{\bf
u}\right)_{B_{\rho^{i}\!R}(x_{0})}\right|\operatorname{d}\!x
+\left(\fint_{B_{\rho^{i}\!R}(x_{0})}\left|{\pi}-\left(\pi\right)_{B_{\rho^{i}\!R}(x_{0})}\right|\operatorname{d}\!x
\right)^{\frac{p'}{p}}\right] \\
&\leq&\left[C_{1}\beta^{\frac{1}{p}}+C_{3}\left(\left[\mathcal{A}\right]_{\operatorname{ BMO}}^{\frac{\hat{\sigma}}{p}}(R)+\varepsilon^{\frac{1}{p}}\right)\right]\sum_{i=0}^{k-1}\fint_{B_{\rho^{i}R}(x_{0})}\left(\mu+\left|D{\bf
u}-\left(D {\bf
u}\right)_{B_{\rho^{i}R}(x_{0})}\right|\right)\operatorname{d}\!x\nonumber\\
&&+\,C_{3}\sum_{i=0}^{k-1}\left(\left[\mathcal{A}\right]_{\operatorname{ BMO}}^{\frac{\hat{\sigma}}{p}}(\rho^{i}R)+\varepsilon^{\frac{1}{p}}\right)\left|\fint_{B_{\rho^{i}R}(x_{0})}D{\bf
u}\operatorname{d}\!x\right|+ C_{2}\int_{0}^{2R}\left(\fint_{B_{\varrho}(x_{0})}\left|{\bf F}-({\bf F})_{B_{\varrho}(x_{0})}\right|^{p'}\operatorname{d}\!x\right)^{\frac{1}{p}}\frac{\operatorname{d}\!\varrho}{\varrho}\,,\nonumber
\end{eqnarray}
where $C_{1}=C_{1}(\varepsilon,\nu,L,p)$, $C_{2}=C_{2}(\varepsilon,\nu,L,p,\beta)$ and $C_{3}=C_{3}(\nu,L,p,\beta)$.
It is convenient for us to choose $\varepsilon^{\frac{1}{p}}=\left[\mathcal{A}\right]_{\operatorname{ BMO}}^{\frac{\hat{\sigma}}{p}}(R)$,
and then applying the monotonicity of $d(\cdot)$ to select the radius $R_{0}$ such that
\begin{eqnarray*}
  C_{3}\left[\mathcal{A}\right]_{\operatorname{ BMO}}^{\frac{\hat{\sigma}}{p}}(R)&=&\frac{C_{3}}{\ln 2}\int_{R}^{2R}\left[\mathcal{A}\right]_{\operatorname{ BMO}}^{\frac{\hat{\sigma}}{p}}(R)\frac{\operatorname{d}\!\varrho}{\varrho}\\
  &\leq&\frac{C_{3}}{\ln 2}\int_{0}^{2R}\left[\mathcal{A}\right]_{\operatorname{ BMO}}^{\frac{\hat{\sigma}}{p}}(\varrho)\frac{\operatorname{d}\!\varrho}{\varrho}\\
  &=&\frac{C_{3}}{\ln 2}d(2R)\leq\frac{C_{3}}{\ln 2}d(2R_{0})=\frac{1}{8}
\end{eqnarray*}
for any $R\leq R_{0}$. Furthermore, selecting $\beta$ sufficiently small such that
$C_{1}\beta^{\frac{1}{p}}\leq\frac{1}{4}$.
Thus, the first term on the right side of \eqref{th1-inequ4} can be absorbed by the left side that
\begin{eqnarray}\label{th1-inequ5}
  && \sum_{i=1}^{k}\left[\fint_{B_{\rho^{i}\!R}(x_{0})}\left|D{\bf
u}-\left(D{\bf
u}\right)_{B_{\rho^{i}\!R}(x_{0})}\right|\operatorname{d}\!x
+\left(\fint_{B_{\rho^{i}\!R}(x_{0})}\left|{\pi}-\left(\pi\right)_{B_{\rho^{i}\!R}(x_{0})}\right|\operatorname{d}\!x
\right)^{\frac{p'}{p}}\right]\nonumber\\
&\leq &\fint_{B_{R}(x_{0})}\left(\mu+\left|D{\bf
u}-\left(D {\bf
u}\right)_{B_{R}(x_{0})}\right|\right)\operatorname{d}\!x + C\int_{0}^{2R}\left(\fint_{B_{\varrho}(x_{0})}\left|{\bf F}-({\bf F})_{B_{\varrho}(x_{0})}\right|^{p'}\operatorname{d}\!x\right)^{\frac{1}{p}}\frac{\operatorname{d}\!\varrho}{\varrho}\nonumber\\
&&+\,C\sum_{i=0}^{k-1}\left[\mathcal{A}\right]_{\operatorname{ BMO}}^{\frac{\hat{\sigma}}{p}}(\rho^{i}R)\left|\fint_{B_{\rho^{i}R}(x_{0})}D{\bf
u}\operatorname{d}\!x\right|,
\end{eqnarray}
where $C=C(\nu,L,p,\left[\mathcal{A}\right]_{\operatorname{ BMO}}(\cdot))$.

Next, we turn our attention to estimate the last term on the right side of \eqref{th1-inequ5}. A combination of \eqref{th1-inequ5} and \eqref{eqn-minimal2} yields that
\begin{eqnarray}\label{th1-inequ6}
 && \left|\fint_{B_{\rho^{k+1}R}(x_{0})}D{\bf
u}\operatorname{d}\!x\right| \nonumber\\
  &=& \left|\sum_{i=0}^{k}\left(\fint_{B_{\rho^{i+1}R}(x_{0})}D{\bf
u}\operatorname{d}\!x-\fint_{B_{\rho^{i}R}(x_{0})}D{\bf
u}\operatorname{d}\!x\right)+\fint_{B_{R}(x_{0})}D{\bf
u}\operatorname{d}\!x\right| \nonumber \\
  &\leq& \rho^{-2}\sum_{i=0}^{k}\fint_{B_{\rho^{i}R}(x_{0})}\left|D{\bf
u}-\left(D{\bf u}\right)_{B_{\rho^{i}R}(x_{0})}\right|\operatorname{d}\!x+\left|\fint_{B_{R}(x_{0})}D{\bf
u}\operatorname{d}\!x\right| \nonumber\\
   &\leq& \frac{2}{\rho^{2}}\fint_{B_{R}(x_{0})}\left(\mu+\left|D{\bf
u}-\left(D {\bf
u}\right)_{B_{R}(x_{0})}\right|\right)\operatorname{d}\!x + C\int_{0}^{2R}\left(\fint_{B_{\varrho}(x_{0})}\left|{\bf F}-({\bf F})_{B_{\varrho}(x_{0})}\right|^{p'}\operatorname{d}\!x\right)^{\frac{1}{p}}\frac{\operatorname{d}\!\varrho}{\varrho}\nonumber\\
&&+\,C\sum_{i=0}^{k-1}\left[\mathcal{A}\right]_{\operatorname{ BMO}}^{\frac{\hat{\sigma}}{p}}(\rho^{i}R)\left|\fint_{B_{\rho^{i}R}(x_{0})}D{\bf
u}\operatorname{d}\!x\right|+\left|\fint_{B_{R}(x_{0})}D{\bf
u}\operatorname{d}\!x\right| \nonumber\\
&\leq& C\fint_{B_{R}(x_{0})}\left(\mu+\left|D{\bf
u}\right|\right)\operatorname{d}\!x + C\int_{0}^{2R}\left(\fint_{B_{\varrho}(x_{0})}\left|{\bf F}-({\bf F})_{B_{\varrho}(x_{0})}\right|^{p'}\operatorname{d}\!x\right)^{\frac{1}{p}}\frac{\operatorname{d}\!\varrho}{\varrho}\nonumber\\
&&+\,C\sum_{i=0}^{k-1}\left[\mathcal{A}\right]_{\operatorname{ BMO}}^{\frac{\hat{\sigma}}{p}}(\rho^{i}R)\left|\fint_{B_{\rho^{i}R}(x_{0})} D{\bf
u}\operatorname{d}\!x\right|.
\end{eqnarray}

Setting the notation
\begin{equation*}
  M:=\fint_{B_{R}(x_{0})}\left(\mu+\left|D{\bf
u}\right|\right)\operatorname{d}\!x + \int_{0}^{2R}\left(\fint_{B_{\varrho}(x_{0})}\left|{\bf F}-({\bf F})_{B_{\varrho}(x_{0})}\right|^{p'}\operatorname{d}\!x\right)^{\frac{1}{p}}\frac{\operatorname{d}\!\varrho}{\varrho}\,,
\end{equation*}
we claim that the following uniform estimate
\begin{equation}\label{th1-inequ7}
  \left|\fint_{B_{\rho^{k+1}R}(x_{0})}D{\bf
u}\operatorname{d}\!x\right|\leq C(p,\nu,L)M
\end{equation}
holds for any $k\in\mathbb{N}$.

The proof of this claim is based on the method of induction. Obviously, the estimate for the case of $k=0$ is trivial. We assume that \eqref{th1-inequ7} is true for all $k\leq k_{0}$, and then verify the case of $k=k_{0}+1$.
In terms of \eqref{th1-inequ6}, we get
\begin{eqnarray}\label{th1-inequ10}
  \left|\fint_{B_{\rho^{k_{0}+2}R}(x_{0})}D{\bf
u}\operatorname{d}\!x\right|&\leq& CM+ C\sum_{i=0}^{k_{0}}\left[\mathcal{A}\right]_{\operatorname{ BMO}}^{\frac{\hat{\sigma}}{p}}(\rho^{i}R)\left|\fint_{B_{\rho^{i}R}(x_{0})}D{\bf
u}\operatorname{d}\!x\right| \nonumber\\
   &\leq& CM+CM\sum_{i=0}^{k_{0}}\left[\mathcal{A}\right]_{\operatorname{ BMO}}^{\frac{\hat{\sigma}}{p}}(\rho^{i}R).
\end{eqnarray}
Applying the fact that $\left[\mathcal{A}\right]_{\operatorname{BMO}}(\cdot)$ is non-deceasing, $\rho\in\left(0,\frac{1}{4}\right]$ and the definition of $d(\cdot)$ in \eqref{dini-bmo}, we obtain that
\begin{eqnarray*}
\sum_{i=0}^{k_{0}}\left[\mathcal{A}\right]_{\operatorname{ BMO}}^{\frac{\hat{\sigma}}{p}}(\rho^{i}R)
&\leq& \sum_{i=0}^{\infty}\left[\mathcal{A}\right]_{\operatorname{ BMO}}^{\frac{\hat{\sigma}}{p}}(\rho^{i}R) \nonumber \\
   &=& \frac{1}{\ln 2}\int_{R}^{2R}\left(\left[\mathcal{A}\right]_{\operatorname{ BMO}}^{\frac{\hat{\sigma}}{p}}(R)\right)\frac{\operatorname{d}\!\varrho}{\varrho}+\frac{1}{\ln \frac{1}{\rho}}\sum_{i=1}^{\infty}\int_{\rho^{i}R}^{\rho^{i-1}R}\left(\left[\mathcal{A}\right]_{\operatorname{ BMO}}^{\frac{\hat{\sigma}}{p}}(\sigma^{i}R)\right)\frac{\operatorname{d}\!\varrho}{\varrho} \nonumber \\
   &\leq& \frac{1}{\ln 2}\int_{R}^{2R}\left(\left[\mathcal{A}\right]_{\operatorname{ BMO}}^{\frac{\hat{\sigma}}{p}}(\varrho)\right)\frac{\operatorname{d}\!\varrho}{\varrho}+\frac{1}{\ln \frac{1}{\rho}}\sum_{i=1}^{\infty}\int_{\rho^{i}R}^{\rho^{i-1}R}\left(\left[\mathcal{A}\right]_{\operatorname{ BMO}}^{\frac{\hat{\sigma}}{p}}(\varrho)\right)\frac{\operatorname{d}\!\varrho}{\varrho} \nonumber \\
   &\leq& \left(\frac{1}{\ln 2}+\frac{1}{\ln 4}\right)\int_{0}^{2R}\left(\left[\mathcal{A}\right]_{\operatorname{ BMO}}^{\frac{\hat{\sigma}}{p}}(\varrho)\right)\frac{\operatorname{d}\!\varrho}{\varrho} \nonumber\\
   &\leq&\frac{2}{\ln 2}\,d\left(2R\right).
\end{eqnarray*}
We further restrict the value of $R_{0}$ such that
\begin{equation*}
  \frac{2}{\ln 2}\,d\left(2R_{0}\right)\leq1\,.
\end{equation*}
By virtue of the monotonicity of $d(\cdot)$\,, we have
\begin{equation}\label{th1-inequ8}
  \sum_{i=0}^{k_{0}}\left[\mathcal{A}\right]_{\operatorname{ BMO}}^{\frac{\hat{\sigma}}{p}}(\rho^{i}R)\leq1
\end{equation}
for any $R\leq R_{0}$. 
Thus the above argument implies that
\begin{equation*}
  \left|\fint_{B_{\rho^{k_{0}+2}R}(x_{0})}D{\bf
u}\operatorname{d}\!x\right|\leq CM\,.
\end{equation*}
Hence, the assertion \eqref{th1-inequ7} holds whenever $k\in\mathbb{N}$.

In the sequel, by applying this claim to \eqref{th1-inequ5}, and combining \eqref{th1-inequ8} with \eqref{th1-inequ11} and \eqref{eqn-minimal2}, we deduce that
\begin{eqnarray}\label{th1-inequ12}
 && \sum_{i=1}^{k}\left[\fint_{B_{\rho^{i}\!R}(x_{0})}\left|D{\bf
u}-\left(D{\bf
u}\right)_{B_{\rho^{i}\!R}(x_{0})}\right|\operatorname{d}\!x
+\left(\fint_{B_{\rho^{i}\!R}(x_{0})}\left|{\pi}-\left(\pi\right)_{B_{\rho^{i}\!R}(x_{0})}\right|\operatorname{d}\!x
\right)^{\frac{p'}{p}}\right]\nonumber\\
&\leq &C\fint_{B_{R}(x_{0})}\left(\mu+\left|D{\bf
u}\right|\right)\operatorname{d}\!x + C\int_{0}^{2R}\left(\fint_{B_{\varrho}(x_{0})}\left|{\bf F}-({\bf F})_{B_{\varrho}(x_{0})}\right|^{p'}\operatorname{d}\!x\right)^{\frac{1}{p}}\frac{\operatorname{d}\!\varrho}{\varrho}\,,
\end{eqnarray}
where $C$ depends only on $p$, $\nu$, $L$ and $\left[\mathcal{A}\right]_{\operatorname{BMO}}(\cdot)$. We pass to the limit as $k\rightarrow\infty$ in \eqref{th1-inequ12} to derive
\begin{eqnarray}\label{th1-inequ9}
   && \sum_{i=1}^{\infty}\left[\fint_{B_{\rho^{i}\!R}(x_{0})}\left|D{\bf
u}-\left(D{\bf
u}\right)_{B_{\rho^{i}\!R}(x_{0})}\right|\operatorname{d}\!x
+\left(\fint_{B_{\rho^{i}\!R}(x_{0})}\left|{\pi}-\left(\pi\right)_{B_{\rho^{i}\!R}(x_{0})}\right|\operatorname{d}\!x
\right)^{\frac{p'}{p}}\right]\nonumber\\
&\leq &C\fint_{B_{R}(x_{0})}\left(\mu+\left|D{\bf
u}\right|\right)\operatorname{d}\!x + C\int_{0}^{2R}\left(\fint_{B_{\varrho}(x_{0})}\left|{\bf F}-({\bf F})_{B_{\varrho}(x_{0})}\right|^{p'}\operatorname{d}\!x\right)^{\frac{1}{p}}\frac{\operatorname{d}\!\varrho}{\varrho}\,.
\end{eqnarray}
And then, let $k\rightarrow\infty$ in \eqref{th1-inequ1} and \eqref{th1-inequ2}, we combine the Lebesgue differentiation Theorem with \eqref{th1-inequ9} to conclude that
\begin{eqnarray*}
   &&  \left|D{\bf u}(x_{0})-\fint_{B_{R}(x_{0})}D{\bf u}\operatorname{d}\!x \right|+ \left|\pi(x_{0})-\fint_{B_{R}(x_{0})}\pi\operatorname{d}\!x \right|^{\frac{p'}{p}} \\
   &\leq& \rho^{-2}\sum_{i=0}^{\infty}\left|\fint_{B_{\rho^{i}\!R}(x_{0})}\!\left(D{\bf
u}-\left(D{\bf
u}\right)_{B_{\sigma^{i}\!R}(x_{0})}\right)\operatorname{d}\!x\right|+\rho^{-\frac{2p'}{p}}C(p)
\sum_{i=0}^{\infty}\left|\fint_{B_{\rho^{i}\!R}(x_{0})}\!\left({\pi}-\left(\pi\right)_{B_{\rho^{i}\!R}(x_{0})}
\right)\operatorname{d}\!x\right|^{\frac{p'}{p}}  \\
  &\leq& C\fint_{B_{R}(x_{0})}\left(\mu+\left|D{\bf
u}\right|\right)\operatorname{d}\!x +C\left(\fint_{B_{R}(x_{0})}\left|\pi\right|\operatorname{d}\!x\right)^{\frac{p'}{p}} + C\int_{0}^{2R}\left(\fint_{B_{\varrho}(x_{0})}\left|{\bf F}-({\bf F})_{B_{\varrho}(x_{0})}\right|^{p'}\operatorname{d}\!x\right)^{\frac{1}{p}}\frac{\operatorname{d}\!\varrho}{\varrho}
\end{eqnarray*}
for almost every $x_{0}\in\Omega$. As a consequence of the above inequalities, we finally derive
the following pointwise estimate
\begin{eqnarray*}
&& |D{\bf u}(x_{0})|+|\pi (x_{0})|^{\frac{p'}{p}} \\
&\leq& C\fint_{B_{R}(x_{0})}\left(\mu+\left|D{\bf u}\right|\right)\operatorname{d}\!x+ C\left(\fint_{B_{R}(x_{0})}\left|\pi\right|\operatorname{d}\!x\right)^{\frac{p'}{p}}+C\int_{0}^{2R}\left(\fint_{B_{\varrho}(x_{0})}\left|{\bf F}-({\bf F})_{B_{\varrho}(x_{0})}\right|^{p'}\operatorname{d}\!x\right)^{\frac{1}{p}}\frac{\operatorname{d}\!\varrho}{\varrho}
\end{eqnarray*}
holds for almost every $x_{0}\in\Omega$ and every $B_{2R}(x_{0})\subset\Omega$ with $R\leq R_{0}$,
where the positive constant $C=C(\nu,L,p,\left[\mathcal{A}\right]_{\operatorname{BMO}}(\cdot))$ and the radius $R_{0}=R_{0}\left(\nu,L,p,d(\cdot)\right)$.
Thus we complete the proof of Theorem \ref{Th1}\,.
\end{proof}

Subsequently, it is devoted to establish a nonlinear potential estimate for the weak solution to \eqref{model} with $p\geq2$ in higher dimensions. Note that there is no extra regularity assumption on the partial map $x\mapsto\mathcal{A}(x,\cdot)$ from now on.

\begin{proof}[Proof of Theorem \ref{Th2}] By proceeding similarly as in the proof of Lemma \ref{caccioppoli-u}\,, we derive the following Caccioppoli inequality
\begin{equation}\label{th2-inequ1}
  \fint_{B_{R}(x_{0})}|D{\bf v}|^{p}\operatorname{d}\!x\leq\frac{C}{R^{p}}\fint_{B_{2R}(x_{0})}|{\bf v}-\left(\bf v\right)_{B_{2R}(x_{0})}|^{p}\operatorname{d}\!x+C\mu^{p}\,,
\end{equation}
where $C=C(n,p,\nu,L)$.
Applying the Sobolev-Poincar\'{e} inequality introduced in \cite[Theorem 7]{DE} and combining with Korn's inequality \eqref{Korn2}, H\"{o}lder's inequality, \eqref{eqn-minimal2} and \eqref{th2-inequ1}, one can find a weak solution pair $\left({\bf v}, \pi_{\bf v}\right)$ to \eqref{ComparisonSystem} with $\left(\bf Wv\right)_{B_{R}(x_{0})}=0$ such that
\begin{eqnarray}\label{th2-inequ4}
&& \left(\fint_{B_{R}(x_{0})}\left|{\bf
v}-\left({\bf
v}\right)_{B_{R}(x_{0})}\right|^{s}\operatorname{d}\!x\right)^{\frac{1}{s}} \nonumber\\
&\leq& C(n,p)R\left(\fint_{B_{R}(x_{0})}\left|\nabla{\bf v}\right|^{p}\operatorname{d}\!x\right)^{\frac{1}{p}} \nonumber\\
&\leq& CR\left[\,\left(\fint_{B_{R}(x_{0})}\left|\nabla{\bf v}-\left(\nabla{\bf
v}\right)_{B_{R}(x_{0})}\right|^{p}\operatorname{d}\!x\right)^{\frac{1}{p}}+\left|\fint_{B_{R}(x_{0})}\nabla{\bf v}\operatorname{d}\!x\right|\,\right] \nonumber\\
&=& CR\left[\,\left(\fint_{B_{R}(x_{0})}\left|\nabla{\bf v}-\left(\nabla{\bf
v}\right)_{B_{R}(x_{0})}\right|^{p}\operatorname{d}\!x\right)^{\frac{1}{p}}+\left|\fint_{B_{R}(x_{0})}D{\bf v}\operatorname{d}\!x\right|\,\right]\nonumber \\
&\leq& CR\left(\fint_{B_{R}(x_{0})}\left|D{\bf v}\right|^{p}\operatorname{d}\!x\right)^{\frac{1}{p}}\nonumber \\
&\leq&C\left(\fint_{B_{2R}(x_{0})}\left|{\bf
v}-\left({\bf
v}\right)_{B_{2R}(x_{0})}\right|^{p}\operatorname{d}\!x\right)^{\frac{1}{p}}+CR\mu
\end{eqnarray}
for some $s>p$, where ${\bf Wv}=\frac{\nabla {\bf v}-\left(\nabla {\bf v}\right)^{T}}{2}$ and $C=C(n,p,\nu,L)$.
Thus, it follows from Lemma \ref{reverse-holder} and H\"{o}lder's inequality that
\begin{equation}\label{th2-inequ2}
  \left(\fint_{B_{R}(x_{0})}\left|{\bf
v}-\left({\bf
v}\right)_{B_{R}(x_{0})}\right|^{p}\operatorname{d}\!x\right)^{\frac{1}{p}}\leq C\fint_{B_{2R}(x_{0})}\left|{\bf
v}-\left({\bf
v}\right)_{B_{2R}(x_{0})}\right|\operatorname{d}\!x+CR\mu\,.
\end{equation}

Furthermore, we claim that the following Campanato-type decay estimate for $\bf v$ holds
\begin{equation}\label{th2-inequ3}
  \left(\fint_{B_{\upsilon R}(x_{0})}\left|{\bf
v}-\left({\bf
v}\right)_{B_{\upsilon R}(x_{0})}\right|^{p}\operatorname{d}\!x\right)^{\frac{1}{p}}\leq C\upsilon^{\gamma}\left(\fint_{B_{R}(x_{0})}\left|{\bf
v}-\left({\bf
v}\right)_{B_{R}(x_{0})}\right|^{p}\operatorname{d}\!x\right)^{\frac{1}{p}}+CR\mu
\end{equation}
for any $\upsilon\in (0,1]$ and some $\gamma=\gamma(n,p,\nu,L)>0$, where $C=C(n,\lambda,\Lambda,p)$.
In fact, we only need to show that it holds for $\upsilon\in (0,\frac{1}{4})$, since the analogous inequality for $\upsilon\in [\frac{1}{4},1]$ is trivial. Now using the same argument as in the estimates of \eqref{th2-inequ4} and combining with the higher integrability result \eqref{v-reverseholder} for $D{\bf v}$ to yield that
\begin{eqnarray*}
 \left(\fint_{B_{\upsilon R}(x_{0})}\left|{\bf
v}-\left({\bf
v}\right)_{B_{\upsilon R}(x_{0})}\right|^{p}\operatorname{d}\!x\right)^{\frac{1}{p}}
&\leq& C(n,p)\upsilon R\left(\fint_{B_{\upsilon R}(x_{0})}\left|\nabla{\bf v}\right|^{p}\operatorname{d}\!x\right)^{\frac{1}{p}}\\
&\leq& C\upsilon R\left(\fint_{B_{\upsilon R}(x_{0})}\left|D{\bf v}\right|^{p}\operatorname{d}\!x\right)^{\frac{1}{p}}\\
&\leq& C\upsilon^{1-\frac{n}{\theta}} R\left(\fint_{B_{\frac{1}{4} R}(x_{0})}\left|D{\bf v}\right|^{\theta}\operatorname{d}\!x\right)^{\frac{1}{\theta}}\\
&\leq& C\upsilon^{1-\frac{n}{\theta}} R\left(\fint_{B_{\frac{1}{2} R}(x_{0})}\left|D{\bf v}\right|^{p}\operatorname{d}\!x\right)^{\frac{1}{p}}\\
&\leq&C\upsilon^{1-\frac{n}{\theta}}\left(\fint_{B_{R}(x_{0})}\left|{\bf
v}-\left({\bf
v}\right)_{B_{R}(x_{0})}\right|^{p}\operatorname{d}\!x\right)^{\frac{1}{p}}+CR\mu.
\end{eqnarray*}
Then the desired estimate \eqref{th2-inequ3} holds for $\upsilon\in (0,\frac{1}{4})$, which is ensured by $p\leq n<\theta$.

In the sequel, our intention is to derive the Campanato type decay estimate for the weak solution $\bf u$ to \eqref{model}. Applying H\"{o}lder's inequality, \eqref{eqn-minimal2}, \eqref{u-v}, \eqref{th2-inequ3} and \eqref{th2-inequ2}, we conclude that
\begin{eqnarray*}
  && \left(\fint_{B_{\upsilon\!R}(x_{0})}\left|{\bf
u}-\left({\bf
u}\right)_{B_{\upsilon\!R}(x_{0})}\right|\operatorname{d}\!x\right)^{p}\\
   &\leq& C(p)\fint_{B_{\upsilon\!R}(x_{0})}\left|{\bf
u}-\left({\bf
v}\right)_{B_{\upsilon\!R}(x_{0})}\right|^{p}\operatorname{d}\!x \\
  &\leq& C(n,p)\upsilon^{-n}\fint_{B_{R}(x_{0})}\left|{\bf
u}-{\bf
v}\right|^{p}\operatorname{d}\!x+C(p)\fint_{B_{\upsilon\!R}(x_{0})}\left|{\bf
v}-\left({\bf
v}\right)_{B_{\upsilon\!R}(x_{0})}\right|^{p}\operatorname{d}\!x \\
   &\leq& C\upsilon^{-n}R^{p}\fint_{B_{2R}(x_{0})}\!\left(\mu^{p}+\left|{\bf F}-({\bf F})_{B_{2R}(x_{0})}\right|^{p'}\right)\operatorname{d}\!x+ C\upsilon^{p\gamma}\fint_{B_{R}(x_{0})}\left|{\bf
v}-\left({\bf
v}\right)_{B_{R}(x_{0})}\right|^{p}\operatorname{d}\!x \\
&\leq&  C\upsilon^{-n}R^{p}\fint_{B_{2R}(x_{0})}\!\left(\mu^{p}+\left|{\bf F}-({\bf F})_{B_{2R}(x_{0})}\right|^{p'}\right)\operatorname{d}\!x+ C\upsilon^{p\gamma}\left(\fint_{B_{R}(x_{0})}\left(\left|{\bf
u}-{\bf
v}\right|+\left|{\bf
u}-\left({\bf
u}\right)_{B_{2R}(x_{0})}\right|\right)\operatorname{d}\!x\right)^{p} \\
&\leq& C\upsilon^{p\gamma}\left(\fint_{B_{2R}(x_{0})}\left|{\bf
u}-\left({\bf
u}\right)_{B_{2R}(x_{0})}\right|\operatorname{d}\!x\right)^{p}+ C_{\upsilon}R^{p}\fint_{B_{2R}(x_{0})}\!\left(\mu^{p}+\left|{\bf F}-({\bf F})_{B_{2R}(x_{0})}\right|^{p'}\right)\operatorname{d}\!x
\end{eqnarray*}
for $p\geq2$, here $C=C(n,p,\nu,L)$ and $C_{\upsilon}=C(n,p,\nu,L,\upsilon)$. Then we conclude that
\begin{equation*}
  \fint_{B_{\upsilon\!R}(x_{0})}\left|{\bf
u}-\left({\bf
u}\right)_{B_{\upsilon\!R}(x_{0})}\right|\operatorname{d}\!x\leq C\upsilon^{\gamma}\fint_{B_{2R}(x_{0})}\left|{\bf
u}-\left({\bf
u}\right)_{B_{2R}(x_{0})}\right|\operatorname{d}\!x+ C_{\upsilon}R\left(\fint_{B_{2R}(x_{0})}\!\left(\mu^{p}+\left|{\bf F}\right|^{p'}\right)\operatorname{d}\!x\right)^{\frac{1}{p}}.
\end{equation*}
The subsequent proof goes exactly as that of Theorem \ref{Th1}\,. It suffices to reestimate the integral term involving $\bf F$ as follows
\begin{eqnarray*}
  && \sum_{i=0}^{k-1}\upsilon^{i}R\left(\fint_{B_{\upsilon^{i}R}(x_{0})}\left(\mu^{p}+\left|{\bf
F}\right|^{p'}\right)\operatorname{d}\!x\right)^{\frac{1}{p}}\nonumber \\
   &\leq&  R\left(\fint_{B_{R}(x_{0})}\left(\mu^{p}+\left|{\bf
F}\right|^{p'}\right)\operatorname{d}\!x\right)^{\frac{1}{p}}+ \sum_{i=1}^{\infty}\upsilon^{i}R\left(\fint_{B_{\upsilon^{i}R}(x_{0})}\left(\mu^{p}+\left|{\bf
F}\right|^{p'}\right)\operatorname{d}\!x\right)^{\frac{1}{p}} \nonumber\\
   &=&\frac{1}{\ln2}\int_{R}^{2R}R\left(\fint_{B_{R}(x_{0})}\left(\mu^{p}+\left|{\bf
F}\right|^{p'}\right)\operatorname{d}\!x\right)^{\frac{1}{p}}\frac{\operatorname{d}\!\varrho}{\varrho}
+\frac{1}{\ln\frac{1}{\upsilon}}
\sum_{i=1}^{\infty}\int_{\upsilon^{i}R}^{\upsilon^{i-1}R}\upsilon^{i}R\left(\fint_{B_{\upsilon^{i}R}(x_{0})}
\left(\mu^{p}+\left|{\bf
F}\right|^{p'}\right)\operatorname{d}\!x\right)^{\frac{1}{p}} \frac{\operatorname{d}\!\varrho}{\varrho} \nonumber\\
  &\leq& \frac{2^{\frac{n}{p}}}{\ln2}\int_{R}^{2R}\varrho\left(\fint_{B_{\varrho}(x_{0})}\left(\mu^{p}+\left|{\bf
F}\right|^{p'}\right)\operatorname{d}\!x\right)^{\frac{1}{p}}\frac{\operatorname{d}\!\varrho}{\varrho}
+\frac{1}{\upsilon^{\frac{n}{p}}\ln\frac{1}{\upsilon}}
\sum_{i=1}^{\infty}\int_{\upsilon^{i}R}^{\upsilon^{i-1}R}\varrho\left(\fint_{B_{\varrho}(x_{0})}\left(\mu^{p}+\left|{\bf
F}\right|^{p'}\right)\operatorname{d}\!x\right)^{\frac{1}{p}} \frac{\operatorname{d}\!\varrho}{\varrho} \nonumber\\
&\leq&C(n,p,\nu,L)\int_{0}^{2R}\left( \fint_{B_{\varrho}(x_{0})}\left(\mu^{p}+\left|{\bf F}\right|
^{p'}\right)\operatorname{d}\!x\right)^{\frac{1}{p}}\operatorname{d}\!\varrho\nonumber\\
&=&C\,{\bf W}_{\frac{p}{p+1},p+1}^{2R}\left(\mu^{p}+\left|{\bf F}\right|
^{p'}\right)(x_{0})\,.
\end{eqnarray*}
Therefore, we deduce the desired zero order pointwise estimate \eqref{zero-estimate}, which completes the proof of Theorem \ref{Th2}\,.
\end{proof}

\section*{Acknowledgments}The authors are very grateful to Professor G. Mingione for suggesting this interesting problem  to us.
The authors are supported by the National Natural Science Foundation of China (NNSF Grant No.~12071229 and No.~12001333), and Shandong Provincial Natural Science Foundation (Grant No. ZR2020QA005).

\bibliography{bibliography}

\end{document}